\newtheorem{Lemma}{Lemma}[section]
\newtheorem{Theorem}{Theorem}[section]
\newtheorem{Assumption}{Assumption}[section]
\newtheorem{Remark}{Remark}[section]
\newtheorem{Corollary}{Corollary}[section]
\newtheorem{Definition}{Definition}[section]
\numberwithin{equation}{section}
\numberwithin{figure}{section}
\numberwithin{table}{section}
\newcommand{\PRMspace}{H}
\newcommand{\pathspace}{\mathbb{D}([0,T]:\bell_1^\downarrow \times \bell_1)}
\newcommand{\pathspacer}{\mathbb{D}([0,T]:\Rmb^{\infty} \times \Rmb^{\infty})}
\newcommand{\pathspacef}{\mathbb{D}([0,T]:\bell_1^\downarrow)}
\newcommand{\leb}{\mbox{Leb}}
\newcommand{\controlset}{\mathcal{A}}
\newcommand{\Xt}{{\mathbb{X}_t}}
\newcommand{\XT}{{\mathbb{X}_T}}
\newcommand{\rate}{{\mathcal{I}}}
\newcommand{\noi}{\noindent}
\newcommand{\ti}{\tilde}
\newcommand{\Cmb}{{\mathbb{C}}}
\newcommand{\Dmb}{{\mathbb{D}}}
\newcommand{\Emb}{{\mathbb{E}}}
\newcommand{\Mmb}{{\mathbb{M}}}
\newcommand{\Nmb}{{\mathbb{N}}}
\newcommand{\Pmb}{{\mathbb{P}}}
\newcommand{\Rmb}{{\mathbb{R}}}
\newcommand{\Smb}{{\mathbb{S}}}
\newcommand{\Vmb}{{\mathbb{V}}}
\newcommand{\Amc}{{\mathcal{A}}}
\newcommand{\Bmc}{{\mathcal{B}}}
\newcommand{\Cmc}{{\mathcal{C}}}
\newcommand{\Fmc}{{\mathcal{F}}}
\newcommand{\Imc}{{\mathcal{I}}}
\newcommand{\Mmc}{{\mathcal{M}}}
\newcommand{\Smc}{{\mathcal{S}}}
\newcommand{\Tmc}{{\mathcal{T}}}
\newcommand{\zero}{{\boldsymbol{0}}}
\newcommand{\one}{{\boldsymbol{1}}}
\newcommand{\Bbd}{{\boldsymbol{B}}}
\newcommand{\ebd}{{\boldsymbol{e}}}
\newcommand{\etabd}{{\boldsymbol{\eta}}}
\newcommand{\hbd}{{\boldsymbol{h}}}
\newcommand{\phibd}{{\boldsymbol{\phi}}}
\newcommand{\psibd}{{\boldsymbol{\psi}}}
\newcommand{\varphibd}{{\boldsymbol{\varphi}}}
\newcommand{\xbd}{{\boldsymbol{x}}}\newcommand{\Xbd}{{\boldsymbol{X}}}
\newcommand{\ybd}{{\boldsymbol{y}}}\newcommand{\Ybd}{{\boldsymbol{Y}}}
\newcommand{\zetabd}{{\boldsymbol{\zeta}}}
\newcommand{\bell}{{\boldsymbol{\ell}}}
\newcommand{\Amcbar}{{\bar{\Amc}}}
\newcommand{\etabar}{{\bar{\eta}}}
\newcommand{\Nbar}{{\bar{N}}}
\newcommand{\psibar}{{\bar{\psi}}}
\newcommand{\thetabar}{{\bar{\theta}}}
\newcommand{\varphibar}{{\bar{\varphi}}}
\newcommand{\Xbar}{{\bar{X}}}
\newcommand{\Ybar}{{\bar{Y}}}
\newcommand{\zetabar}{{\bar{\zeta}}}
\newcommand{\ahat}{{\hat{a}}}
\newcommand{\bhat}{{\hat{b}}}
\newcommand{\chat}{{\hat{c}}}
\newcommand{\Btil}{{\tilde{B}}}
\newcommand{\Dtil}{{\tilde{D}}}
\newcommand{\etatil}{{\tilde{\eta}}}
\newcommand{\varphitil}{{\tilde{\varphi}}}
\begin{document}

\title[Large deviation asymptotics for JSQ$(d_n)$]{Large Deviation Asymptotics for the Supermarket Model with Growing Choices}
\author{Amarjit Budhiraja and Ruoyu Wu}
\date{\today}
\begin{abstract}
We consider the Markovian supermarket model with growing choices, where jobs arrive at rate $n\lambda_n$ and each of $n$ parallel servers processes jobs in its queue at rate $1$.
Each incoming job joins the shortest among $d_n \in \{1,\dotsc,n\}$ randomly selected queues.
Under the assumption $d_n \to \infty$ and $\lambda_n \to \lambda \in (0,\infty)$ as $n\to \infty$, a large deviation principle (LDP) for the occupancy process is established in a suitable infinite-dimensional path space, and it is shown that the rate function is invariant with respect to the manner in which $d_n \to \infty$.
The LDP gives information on the rate of decay of probabilities of various types of rare events associated with the system.
We illustrate this by establishing explicit exponential decay rates for probabilities of large total number of jobs in the system. 
As a corollary, we also show that probabilities of certain rare events can indeed depend on the rate of $d_n \to \infty$.

\smallskip

\noi {\bf AMS 2020 subject classifications:}
60F10, 
60J74, 
34H05, 
90B15 

\smallskip

\noi{\bf Keywords:} calculus of variations, diminishing rates, discontinuous statistics,  infinite-dimensional Skorokhod problem, join-the-shortest-queue, jump-Markov processes in infinite dimensions, large deviations, load balancing, power of choice

\end{abstract}
\maketitle

\section{Introduction}
\label{sec:intro}
This work investigates the asymptotic behavior, under a large deviation scaling, of a class of randomized load balancing schemes in large-scale multi-server systems. We consider a system with $n$ parallel queues,  and jobs arriving according to a Poisson process with rate $n\lambda_n$, where $\lambda_n \to \lambda \in (0,\infty)$ as $n\to \infty$.  Each server processes jobs in its queue using the FIFO protocol and the service times are exponential with mean $1$. We assume that the inter-arrival times and service times are mutually independent. Each incoming job joins the shortest among $d_n$ queues, where $d_n \in \{1, \dotsc, n\}$, sampled uniformly at random without replacement.
This policy is commonly referred to as JSQ($d_n$), namely Join-the-Shortest-Queue-$d_n$ or the ``supermarket model''.

Two important special cases are: JSQ(1), where each job selects a queue uniformly at random, leading to $n$ independent M/M/1 queues; and JSQ($n$), where the job joins the shortest of all $n$ queues, referred to simply as JSQ. The latter scheme is known to achieve the optimal load balancing while the former is very easy to implement without need of any state information. The case of a fixed $d > 1$ is known as the ``power-of-$d$'' scheme.

It is well known from the works of  Mitzenmacher\cite{mitzenmacher2001power} and Vvedenskaya et al.\cite{vvedenskaya1996queueing}  that increasing $d$ from $1$ to $2$ greatly improves performance in terms of queue length distribution: the tail decays exponentially when $d = 1$ and superexponentially when $d = 2$. Subsequent studies have established diffusion limits for fixed $d$ (see, e.g., \cite{banerjee2018join, bramson2012asymptotic}). Several works have  explored the trade-offs between complexity and performance under different choices of $d$. See the comprehensive survey \cite{van2018scalable} for an overview of this general area.

Although fixed $d \geq 2$ schemes offer substantial gains over random assignment (i.e. $d=1$), they still fall short of the performance achieved by JSQ($n$). This motivates analyzing the regime where $d_n$ increases with $n$, which is the focus of this work. 

We are specifically interested in the large deviation behavior of the system as $n\to \infty$. 
A natural state descriptor for a JSQ($d_n$) system, at time instant $t \in [0,T]$,  is the infinite-dimensional state occupancy vector $\Xbd^n(t) = (X^n_0(t), X^n_1(t),\dotsc)$ where $X^n_i(t)$ corresponds to the proportion of queues which are of length $i$ or longer at time $t$. 
It was shown in \cite{BhamidiBudhirajaDewaskar2022near} that $\Xbd^n$ converges in $\pathspacef$, in probability, as $n \to \infty$, where $\pathspacef$ is the space of c\'{a}dl\'{a}g functions from $[0,T]$ to $\bell_1^{\downarrow}$ (here  $\bell_1^{\downarrow}$ is a closed subset of the Banach space $\bell_1$ -- the space of real absolutely summable sequences equipped with the usual norm -- cf. Section \ref{sec:notat}), to a deterministic limit, whenever $d_n \to \infty$. Furthermore this {\em law of large numbers} (LLN) limit does not depend on the manner in which $d_n \to \infty$. Previously \cite{mukboretal} had shown that $\Xbd^n$ is tight in the above path space and any limit point satisfies the same system of {\em fluid limit equations} irrespective of how $d_n$ approached $\infty$.

The latter paper also showed that when $\frac{d_n}{\sqrt{n}\log n} \to \infty$, and $\lambda_n \to 1$, then with a suitable centering and normalization the state occupancy process is asymptotically described by a two-dimensional Gaussian process which  previously had been shown to be the limit of these fluctuations in the case $d_n=n$ in \cite{eschenfeldt2018join}. 
In order to differentiate the asymptotic behavior of JSQ($d_n$) for $d_n <n$ from that of JSQ($n$), the paper \cite{BhamidiBudhirajaDewaskar2022near} investigated diffusion approximations for the suitably centered and normalized state occupancy process  in the critical
regime (i.e., when $\lambda_n \to 1$ in a suitable manner) that allow for possibly a slower growth of $d_n$
than that permitted by the results in \cite{mukboretal}. This paper showed that, in contrast to the LLN behavior which is insensitive to the manner in which $d_n \to \infty$, the   diffusion
limit depends crucially on the rate of growth of $d_n$ and provided distinct explicit characterizations for the limiting fluctuations in the three regimes: $d_n/\sqrt{n} \to 0$, $d_n/\sqrt{n} \to c \in (0,\infty)$, and $d_n/\sqrt{n} \to \infty$.

In this work we are interested in the large deviation behavior of the state occupancy process $\Xbd^n$ in the JSQ($d_n$) system. Throughout we assume that $\Xbd^n(0) = \xbd^n$, where $\xbd^n \in \bell_1^{\downarrow}$ and, for some $\xbd \in \bell_1^{\downarrow}$, $\xbd^n \to \xbd$ in $\bell_1$.  For the case $d_n=n$, a large deviation principle (LDP) for $\Xbd^n$ in 
$\mathbb{D}([0,T]:\Rmb^{\infty})$ was established in \cite{BudhirajaFriedlanderWu2021many}. 
As noted there, the key model features that present technical challenges in the analysis of this large deviation problem are Markovian dynamics with discontinuous statistics, a diminishing rate property of the jump rates, and the infinite dimensionality of the state space; see \cite[Section 1]{BudhirajaFriedlanderWu2021many} for a detailed discussion of these points.
The goal of the current work is two-fold: first to allow for general sequences $d_n \to \infty$, and second to strengthen the topology for the LDP from $\mathbb{D}([0,T]:\Rmb^{\infty})$ to $\mathbb{D}([0,T]:\bell_1^{\downarrow})$.

One of the key observations in the analysis of \cite{BudhirajaFriedlanderWu2021many} was that when $d_n=n$, one can introduce an infinite-dimensional Skorokhod map $\Gamma_{\infty}: \mathbb{D}([0,T]:\Rmb^{\infty}) \to \mathbb{D}([0,T]:(-\infty, 1]^{\infty})$ (see Definition \ref{def:Smap}) and a {\em free process} $\Ybd^n$, associated with the occupancy process $\Xbd^n$, with sample paths in $\mathbb{D}([0,T]:\Rmb^{\infty})$ such that $\Xbd^n=\Gamma_{\infty}(\Ybd^n)$. 
Using this relation, \cite{BudhirajaFriedlanderWu2021many} in fact established a LDP for the pair $(\Xbd^n, \Ybd^n)$ in $\mathbb{D}([0,T]:\Rmb^{\infty} \times \Rmb^{\infty})$.  
In the general setting of $d_n <n$, one can once more associate a similar free process $\Ybd^n$ with $\Xbd^n$ (see equations \eqref{eq:Y1}--\eqref{eq:Yi}), 
however in this case one does not in general have the property $\Xbd^n = \Gamma_{\infty}(\Ybd^n)$ (see Remark \ref{rem:2.2}). 
Roughly speaking, this difficulty arises from the feature that, when $d_n<n$, one may have arrivals to queues of length $j$ or higher at instants $t$ even if $X^n_{j-1}(t) <1$. 
This behavior is impossible in the JSQ($n$) system. This difficulty requires us to develop a different analysis, particularly for the proof of the large  deviation upper bound. Our main result (Theorem \ref{thm:mainResult}) shows that the pair process $(\Xbd^n, \Ybd^n)$ satisfies a LDP in $\pathspace$ with the same rate function as in \cite{BudhirajaFriedlanderWu2021many} (with definition restricted to this smaller space). 
The main observation here is that although the controlled occupancy processes and the associated free processes that arise in the large deviation analysis cannot be related through the Skorokhod map $\Gamma_{\infty}$, their weak limits $(\Xbd^*, \Ybd^*)$ are indeed related in this manner (namely, $\Xbd^* = \Gamma_{\infty}(\Ybd^*)$; see Lemma \ref{lem:convergence}). 
This invariance result at the large deviation scaling is in sharp contrast to the behavior under the diffusive scaling studied in \cite{BhamidiBudhirajaDewaskar2022near} (for the critical regime, $\lambda_n \to 1$), which was discussed in the previous paragraph. The reason for this can be seen from the key term $\beta_n$ that appears in the evolution equation for the 
occupancy process (see \eqref{eq:beta} and \eqref{eq:state-process}). For the LLN and LDP analysis one only needs to understand the behavior of $\beta_n(x)$ for a fixed $x <1$ and as long as $d_n \to \infty$, for such $x$, $\beta_n(x) \to 0$. In contrast, for the study of the system under diffusion scaling one needs to analyze the properties of $\beta_n$ in $O(n^{-1/2})$ neighborhoods of $1$ which can lead to complex asymptotic limiting behavior that depends intricately on the rates at which $d_n \to \infty$ and $\lambda_n \to 1$.

The strengthening of the LDP from the space $\mathbb{D}([0,T]:\Rmb^{\infty} \times \Rmb^{\infty})$ to $\pathspace$ also requires additional work, specifically in the tightness proofs that are needed both for the upper and lower bounds. 
One basic obstacle is that the infinite-dimensional Skorokhod map, which is a Lipschitz function from $\mathbb{D}([0,T]:\Rmb^{\infty})$ to itself (see \cite[Lemma 2.2]{BudhirajaFriedlanderWu2021many}) is not Lipschitz as a map from $\mathbb{D}([0,T]:\bell_1)$ to itself. 
We overcome this difficulty by reducing analyses to that of finite-dimensional Skorokhod maps (see e.g.\ the proofs of Lemma 
\ref{lem:tightCompact} and Lemma 
\ref{lem:special-well-posedness}) for which the Lipschitz property is available (see Remarks \ref{rmk:Skorokhod-map} and \ref{rem:2.3}).

The lower bound analysis requires additional care.  
Indeed the most technically demanding part of the proof of the LDP in \cite{BudhirajaFriedlanderWu2021many} was a certain uniqueness result for a system of equations for continuous $\Rmb^{\infty} \times \Rmb^{\infty}$-valued trajectories $(\zetabd, \psibd)$ involving  certain control sequences $\varphibd$ (see Lemma 5.1 in \cite{BudhirajaFriedlanderWu2021many} and also Lemma \ref{lem:uniqueness} of the current work).  
This required a series of delicate approximations to a given pair of trajectories $(\zetabd, \psibd)$ that were suitably close with respect to the metric on 
$\mathbb{D}([0,T]:\Rmb^{\infty} \times \Rmb^{\infty})$. 
Although the same approximation scheme works in the current setting, one needs to ensure that the errors in the approximations are controlled with respect to the more demanding metric on $\pathspace$.

One of the advantages of establishing a LDP for $\Xbd^n$ in
$\mathbb{D}([0,T]:\bell_1^{\downarrow})$ is that it immediately yields a LDP for the process $Z^n$ of  total number of jobs in the system in $\mathbb{D}([0,T]:\Rmb)$. This follows on noting that
$Z^n(t) = \sum_{k=1}^{\infty} X^n_k(t)$ and applying the contraction principle.  One can similarly establish a LDP of related quantities, such as the process of total number of jobs in queues of lengths $k$ or higher.  Although in general the variational problems governing these LDP results are not tractable for explicit calculations, in some cases, by exploiting special features of the associated calculus of variations problems, one can obtain more information.  We illustrate this in Theorem \ref{thm:large-customers} by considering the setting where $\lambda=1$ and $x_1=1$ (i.e. asymptotically all servers are busy).
Roughly speaking this result shows that, for a given $\varepsilon>0$, denoting by $G^n_\varepsilon$ the event that  the number of  jobs  in the system 
at some instant $t \in [0,T]$ is at least $n\varepsilon$ more than the initial number of jobs, namely,
$$G^n_\varepsilon  := \{\|\Xbd^n(t)\|_1 > \|\xbd^n\|_1 + \varepsilon \mbox{ for some } t \in [0,T]\},
$$
we have, for large $n$ 
$$
\Pmb(G^n_\varepsilon) \approx
		\exp\left\{ - nT\ell\left( \frac{\frac{\varepsilon}{T}+\sqrt{4+(\frac{\varepsilon}{T})^2}}{2} \right) - nT\ell\left( \frac{-\frac{\varepsilon}{T}+\sqrt{4+(\frac{\varepsilon}{T})^2}}{2} \right)\right\},
$$
where $\ell$ is as defined in Section \ref{sec:notat}.
In particular this says that for large $n$ and $T$
$$
\Pmb(G^n_\varepsilon) \approx e^{-n\varepsilon^2/4T}.$$
For precise statement see Theorem \ref{thm:large-customers}.  As an immediate corollary of this result we obtain the asymptotic formula established in \cite[Theorem 2.5]{BudhirajaFriedlanderWu2021many},  for {\em buffer overflow} events $U^n_j$ and $V^n_j$, in the case $d_n=n$ and $\xbd^n = \xbd = (1, 0, \ldots)$ (see Corollary \ref{cor:long-queue}).  This result also illustrates the important point that although the LDP  is invariant  under the choice of the sequence $d_n \to \infty$, the asymptotic decay rate for specific events can indeed depend in an intricate manner on the rate at which $d_n \to \infty$. Specifically, in 
Remark \ref{rem:diffdec} we show that when $j=3$, the asymptotic exponential decay rate of $\Pmb(U^n_j)$ is strictly positive when $d_n = n$ and equals $0$ when $d_n = o(n)$, capturing the performance improvement in the former case in comparison with the latter case.

One key ingredient in the proof of Theorem \ref{thm:large-customers} is Lemma \ref{lem:special-well-posedness} which gives well-posedness of certain infinite system of equations with Skorokhod reflections and state feedback controls. 
Using this result we construct the {\em most likely state trajectory} associated with the event $G^n_{\varepsilon}$ given in terms of a suitably chosen feedback control (see Section \ref{sec:examples}, below the proof of Lemma \ref{lem:special-well-posedness}). 
Verifying that this is indeed the optimal trajectory is the most demanding part of this section and uses ideas from calculus of variations and exploits the convexity properties of the cost function $\ell$.

Finally, we remark that requiring $d_n \to \infty$ allows for some simplifications in the large deviation analysis.  
The large deviation behavior for the JSQ($d$) model, namely when $d_n=d \ge 2$ for all $n \in \Nmb$ is currently an open problem. 
One of the key challenges arises from the asymptotic behavior of $\beta_n$. 
When $d_n \to \infty$, $\beta_n(x) \to 0$ for all $x \in (0,1)$ whereas when $d_n \equiv d$, $\beta_n(x) \to x^d$, as $n \to \infty$. 
This introduces a non-trivial, nonlinear (since $d\ge 2$) state dependence and the current analysis that relies on properties of an infinite-dimensional Skorokhod map is not applicable. 
The main challenge is once more in the proof of the large deviation lower bound and in establishing a uniqueness result analogous to Lemma \ref{lem:uniqueness}. 
We leave this study for future work.

\subsection{Notation}
\label{sec:notat}

The following notation will be used. 
Fix $T \in (0,\infty)$. All stochastic processes will be considered over the time horizon $[0,T]$.  
Let $\Nmb_0 := \Nmb\cup\{0\}$, where $\Nmb$ is the set of all natural numbers.
Let $S$ be a Polish space.
The Borel $\sigma$-field on $S$ will be denoted as $\Bmc(S)$.
Denote by $\Dmb([0,T]:S)$ the collection of all maps from $[0,T]$ to $S$ that are right continuous and have left limits. 
This space is equipped with the usual 
Skorokhod topology. 
Similarly $\Cmb([0,T]:S)$ is the space of all continuous maps from $[0,T]$ to $S$ equipped with the uniform topology. 
A sequence of $\Dmb([0,T]:S)$-valued random variables is said to be $\Cmb$-tight if it is tight in $\Dmb([0,T]:S)$ and any weak limit point takes values in $\Cmb([0,T]:S)$ a.s.
The space of all continuous and bounded real-valued functions on $S$ will be denoted as $\Cmb_b(S)$.
For a bounded map $f \colon S \to \Rmb$, let $\|f\|_{\infty} := \sup_{s \in S} |f(s)|$.
Denote by $\bell_1$ the space of real sequences $\xbd := (x_1,x_2,\dotsc)$ such that $\|x\|_1 := \sum_{i=1}^\infty |x_i| < \infty$.
Let 
\begin{equation}
    \label{eq:ell-1-down}
    \bell_1^\downarrow := \{\xbd \in \bell_1 : x_i \ge x_{i+1} \mbox{ and } x_i \in [0,1] \mbox{ for all } i \in \Nmb \}
\end{equation}
be the space of non-increasing sequences in $\bell_1$ with values in $[0,1]$, equipped with the $\|\cdot\|_1$ norm.
Note that $\bell_1^\downarrow$ is a closed subset of $\bell_1$ and hence is a Polish space.
The $L^1$ norm on $\Rmb^m$,  $m \in \Nmb$, will also be denoted as $\|\cdot\|_1$.
Denote by $\Rmb^\infty$ the set of all real sequence $\xbd = (x_1,x_2,\dotsc)$ equipped with the product topology, which is metrized with
\begin{equation}
	\label{eq:d-infty}
	d_\infty(\xbd,\ybd) := \sum_{i=1}^\infty\frac{|x_i-y_i|\wedge 1}{2^i}, \quad \xbd,\ybd \in \Rmb^\infty.
\end{equation}
Let $\ell(z) := z\log(z)-z+1$ for $z \ge 0$. 
For $t \in [0,T]$, write $\Xt:=[0,t]\times[0,1]$.
\subsection{Organization}
The rest of this paper is organized as follows.
Section \ref{sec:model} introduces the state dynamics in terms of an infinite collection of Poisson random measures. 
It also gives an equivalent representation using certain free processes and regulator processes, which together asymptotically solve an infinite-dimensional Skorokhod problem. 
In Section \ref{sec:rateFunction}, properties of the solution map of this Skorokhod problem are summarized and the rate function that governs the LDP is introduced.
The main result, Theorem \ref{thm:mainResult}, is then given in Section \ref{sec:mainResult}. 
This section also presents Theorem \ref{thm:large-customers} which gives our main result on exponential decay rates for probabilities of large total number of customers waiting in the system, as an illustration of applications of Theorem \ref{thm:mainResult}.
Dependence of the decay rate for certain events on the rate at which $d_n \to \infty$ is shown in Corollary \ref{cor:long-queue} and Remark \ref{rem:diffdec}. 
Section \ref{sec:varWeakCon} introduces the main variational representation that is the starting point of our analysis and establishes preliminary tightness and limit characterization results that are used in both the Laplace upper bound and lower bound proofs. 
Proof of the Laplace upper bound (i.e.\ \eqref{eqn:LaplaceUpperBound}) is completed in Section \ref{sec:UpperBound} while the lower bound (i.e.\ \eqref{eqn:LaplaceLowerBound}) is taken up in Section \ref{sec:LowerBound} with some auxiliary arguments given in Appendix \ref{sec:appendix}.
Section \ref{sec:compactSets} shows that the function $\Imc$ introduced in Section \ref{sec:rateFunction} is indeed a rate function. 
The results of Sections \ref{sec:UpperBound}, \ref{sec:LowerBound}, and \ref{sec:compactSets} together complete the proof of Theorem
\ref{thm:mainResult}. 
Finally Section \ref{sec:examples} gives the proof of Theorem \ref{thm:large-customers}.
\section{Model}

\subsection{Model Description}
\label{sec:model}
We recall the setting from Section \ref{sec:intro}. For $n \in \Nmb$, fix $d_n\in \{1, \dotsc, n\}$.
Consider a system of $n$ parallel servers each maintaining its own queue.
Jobs arrive to a central dispatcher according to a Poisson process with rate $n\lambda_n$ where $\lambda_n \to\lambda$ for some $\lambda\in(0,\infty)$.
When a job enters the system, it joins the shortest queue among $d_n$ randomly selected queues (without replacement).
If there are multiple shortest queues, then the tie is broken uniformly at random. This is commonly referred to as the JSQ$(d_n)$ routing policy.
We assume throughout that $d_n \to \infty$ as $n\to \infty$.
Each server processes jobs in its queue using the FIFO protocol and the service times are exponential with mean $1$. 
We assume that the inter-arrival times and service times are mutually independent.
The state of the system at time $t$ can be represented as $\Xbd^n(t) = (X^n_0(t), X^n_1(t),\dotsc)$ where $X^n_i(t)$ corresponds to the proportion of queues which are of length $i$ or longer at time $t$.
Note that $X^n_i(t)\in[0,1]$ and $1=X^n_0(t)\geq X^n_1(t)\geq X^n_2(t)\geq \dotsc$ for all $t\in[0,T]$.

We will now give an evolution equation for the state process, which will be convenient for the large deviation analysis, in terms of a collection of Poisson random measures. 
For a locally compact metric space $\Smb$, let $\Mmc_{FC}(\Smb)$ represent the space of measures $\nu$ on $(\Smb,\Bmc(\Smb))$ such that $\nu(K)<\infty$ for every compact $K\in\Bmc(\Smb)$, equipped with the usual vague topology.
This topology can be metrized such that $\Mmc_{FC}(\Smb)$ is a Polish space (see \cite{budhiraja2016moderate, BudhirajaDupuis2019analysis} for one convenient metric).
A PRM $D$ on $\Smb$ with mean measure (or intensity measure) $\nu\in\Mmc_{FC}(\Smb)$ is an $\Mmc_{FC}$-valued random variable such that for each $\PRMspace\in\Bmc(\Smb)$ with $\nu(\PRMspace)<\infty$, $D(\PRMspace)$ is a Poisson random variable with mean $\nu(\PRMspace)$ and for disjoint $\PRMspace_1,\dotsc,\PRMspace_i\in\Bmc(\Smb)$, the random variables $D(\PRMspace_1),\dotsc, D(\PRMspace_i)$ are mutually independent random variables (cf. \cite{IkedaWatanabe}).

Fix $T\in(0,\infty)$ and let $(\Omega,\Fmc,\Pmb)$ be a complete probability space on which we are given a collection of i.i.d.\
Poisson random measures $\{D_i(ds\, dy\, dz)\}_{i\in\Nmb_0}$ on $[0,T]\times[0,1]\times\Rmb_+$ with intensity given by the Lebesgue measure.
Define the filtration $\{\hat{\Fmc}_t\}_{0\leq t\leq T}$ as
\begin{equation*}
	\hat{\Fmc}_t:=\sigma\{D_i((0,s]\times \PRMspace\times B),0\leq s\leq t, \PRMspace\in\Bmc([0,1]), B\in\Bmc(\Rmb_+)\}
\end{equation*}
and let $\{\Fmc_t\}_{0\leq t\leq T}$ be the $\Pmb$-augmentation of this filtration.
Using the above collection of PRM we now construct certain point processes with points in $[0,T]\times [0,1]$ as follows. 

Let $\bar{\Fmc}$ be the $\{\Fmc_t\}_{0\leq t\leq T}$-predictable $\sigma$-field on $\Omega\times[0,T]$.
Denote by $\bar{\controlset}_+$ the class of all $(\bar{\Fmc}\otimes \Bmc([0, 1]))/\Bmc(\Rmb_+)$-measurable
maps from $\Omega\times[0, T]\times [0, 1]$ to $\Rmb_+$.
For $\varphi\in\bar{\controlset}_+$ and each $i\in\Nmb_0$, define the counting process $D_i^\varphi$ on $[0, T]\times[0, 1]$ by
\begin{equation*}
	D_i^\varphi([0, t] \times \PRMspace)
	:=
	\int_{[0,t]\times \PRMspace}\one_{[0,\varphi(s,y))}(z) D_i(ds\, dy\, dz), \text{ for } t \in [0, T],\ \PRMspace \in \Bmc([0, 1]).
\end{equation*}
We regard $D_i^\varphi$ as a controlled random measure, where $\varphi$ is the control process that can be used to produce a
desired intensity. We will write $D_i^\varphi$ as $D^\theta_i$ if $\varphi = \theta$ for some constant $\theta\in\Rmb_+$.
In particular we will frequently take $\theta = n$.
Note that $D_i^\theta$ is a PRM on $[0, T] \times [0, 1]$ with intensity $\theta\, ds\, dy$.

For notational convenience, let $\Xt := [0,t]\times [0,1]$.
Also, for $x\in [0,1]$ with $nx \in \Nmb$, define
$\beta_n(x) := \binom{nx}{d_n} / \binom{n}{d_n}$ which 
equals the 
probability that when one samples $d_n$ random servers without replacement from the $n$ servers, the collection obtained is a subset of a given collection of $nx$ many servers. Extend the definition of $\beta_n$ to all of $[0,1]$ by setting
\begin{equation}
    \label{eq:beta}
    \beta_n(x) := \prod_{i=0}^{d_n-1} \left( \frac{x-\frac{i}{n}}{1-\frac{i}{n}} \right)^+, \;\; x \in [0,1].
\end{equation}
By using $D_0$ to represent the arrival process and $D_i$ to represent the departure process from queues with $i$ customers, $i \in \Nmb$, we can now give the state evolution of  $\Xbd^n$ as follows,
\begin{align}
	X^n_i(t) &= X^n_i(0) - \frac{1}{n} \int_{\Xt} \one_{[0,X^n_i(s-)-X^n_{i+1}(s-))}(y) D_i^{n}(ds\,dy) \notag \\
	&\qquad + \frac{1}{n} \int_{\Xt} \one_{[\beta_n(X^n_i(s-)), \beta_n(X^n_{i-1}(s-)))}(y) D_{0}^{n\lambda_n}(ds\,dy), \label{eq:state-process}
\end{align}
where $X^n_0(t) \equiv 1$ for all $t \in [0,T]$.
Note that $\beta_n(X_0^n(t)) \equiv 1$.
The first term on the right side equals the proportion of queues at time $0$ that are of length $i$ or more, the second term captures the number of departures from queues of length exactly $i$ during $[0,t]$ (note that any such departure only affects $X^n_i$ and keeps $X^n_j$ for $j\neq i$ unchanged) and the third term describes the number of arrivals to a queue with exactly $i-1$ jobs during $[0,t]$. 
Observe that $\beta_n(X^n_{i-1}(s-)))- \beta_n(X^n_i(s-))$ is the (conditional) probability that, given that there is an arrival at time $s$ to the dispatcher, it is routed to a queue with exactly $i-1$ jobs, thus the indicator in the third term corresponds to the JSQ$(d_n)$ policy described at the start of this section. 
Also note that when $d_n=n$, the above conditional probability degenerates to $\one_{\{X^n_{i-1}(s-) = 1, X^n_{i}(s-)<1\}}$ and thus matches with the term in the evolution of the JSQ system given in \cite{BudhirajaFriedlanderWu2021many} (see equations (2.1)-(2.2) therein).

Following \cite{BudhirajaFriedlanderWu2021many}, we rewrite the evolution of $X_i^n$ as follows:
\begin{equation}
	\label{eq:X}
	X_i^n(t) = Y_i^n(t) + \eta_{i-1}^n(t) - \eta_i^n(t), \quad i \ge 1,
\end{equation}
where $\eta_0^n(t) \equiv 0$ and
\begin{align}
	Y_1^n(t) & = X^n_1(0) + \frac{1}{n} \int_{\Xt} D_{0}^{n\lambda_n}(ds\,dy) - \frac{1}{n} \int_{\Xt} \one_{[0,X^n_1(s-)-X^n_2(s-))}(y) D_1^{n}(ds\,dy), \label{eq:Y1} \\
	Y_i^n(t) & = X^n_i(0) - \frac{1}{n} \int_{\Xt} \one_{[0,X^n_i(s-)-X^n_{i+1}(s-))}(y) D_i^{n}(ds\,dy), \quad i \ge 2, \label{eq:Yi} \\
	\eta_i^n(t) & =\frac{1}{n} \int_{\Xt} \one_{[0, \beta_n(X^n_i(s-)))}(y) D_{0}^{n\lambda_n}(ds\,dy), \quad i \ge 1. \label{eq:eta}
\end{align}

For ease of presenting the LDP, we make the following assumption throughout the paper, which in particular assumes deterministic initial states. Note that the first part of the assumption was noted previously.

\begin{Assumption}
	\label{assump:1}
	$d_n \to \infty$ and $\lambda_n \to \lambda \in (0,\infty)$ as $n \to \infty$. 
	There exist a sequence of $\xbd^n$ and $\xbd$ in $\bell_1^\downarrow$ such that $\Xbd^n(0)=\xbd^n$ and $\|\xbd^n - \xbd\|_1 \to 0$ as $n \to \infty$.
\end{Assumption}

For each $n \in \mathbb{N}$, $(\Xbd^n, \Ybd^n)$ is a $\pathspace$-valued random variable. 
This follows from the assumption on the initial condition and the fact that on any compact interval there can be at most finitely many jumps for the $\Ybd^n$ and $\Xbd^n$ processes a.s.\ and each jump is of the form $\pm n^{-1} \ebd_k$ where $\ebd_k$ is the element of $\bell_1$ with $1$ at the $k$-th coordinate and zeros elsewhere.
The main result of this work shows that as $n\to \infty$, the sequence $\{(\Xbd^n, \Ybd^n)\}_{n\in \mathbb{N}}$ satisfies a LDP in the above space.

\subsection{Rate Function}
\label{sec:rateFunction}

We first introduce the Skorokhod problem that will be used in the definition of the LDP rate function and summarize its properties.
For each $M \in \Nmb \cup \{\infty\}$, consider a (possibly infinite) matrix $R_M$ defined as
$$R_M(i,j) := -\one_{\{j=i\}} + \one_{\{j=i-1, i>1\}}, \mbox{ for } (i,j) \in \{1,2,\dotsc,M\}^2.$$
Let $\Vmb := (-\infty, 1]$.
Let $\Dmb_0([0,T]:\Rmb^M)$ be the subset of $\Dmb([0,T]:\Rmb^M)$ consisting of paths $\psibd$ such that $\psibd(0) \in \Vmb^M$.
\begin{Definition}
	\label{def:Smap}
	Let $M \in \Nmb \cup \{\infty\}$ and $\psibd \in \Dmb_0([0,T]:\Rmb^M)$. 
	Then $(\phibd,\etabd) \in \Dmb([0,T]:\Vmb^M \times \Rmb^M)$ is said to solve the Skorokhod problem for $\psibd$ associated with the reflection matrix $R_M$ if the following hold:
	\begin{enumerate}[(i)]
	\item
		$\phibd(t)=\psibd(t)+R_M\etabd(t)$ for all $t \in [0,T]$, namely 
		$$\phi_1(t)=\psi_1(t)-\eta_1(t),\;\; \phi_i(t)=\psi_i(t)+\eta_{i-1}(t)-\eta_i(t) \mbox{ for all } 2 \le i \le M \mbox{ and } t \in [0,T].$$
	\item 
		For each $i\in\{1,2,\dotsc,M\}$,
		$\eta_i(0)=0$, $\eta_i$ is nondecreasing, and $\int_0^T (1-\phi_i(s)) \,d\eta_i(s)=0$.
	\end{enumerate}
\end{Definition}

The structure of $R_M$ guarantees that there is always a unique solution $(\phibd,\etabd)$ to the Skorokhod problem for $\psibd \in \Dmb_0([0,T]:\Rmb^M)$; see  \cite[Lemma 2.2]{BudhirajaFriedlanderWu2021many}.
We denote the Skorokhod map $\Gamma_M \colon \Dmb_0([0,T]:\Rmb^M) \to \Dmb([0,T]:\Vmb^M)$ as $\Gamma_M(\psibd)=\phibd$ if $(\phibd,\etabd)$ solves the Skorokhod problem posed by $\psibd$.

\begin{Remark}
	\label{rmk:Skorokhod-map}
	For $M \in \Nmb \cup \{\infty\}$, it is easy to verify that if $\psibd \in \Dmb_0([0,T]:\Rmb^M)$ is such that $\psi_i$ is continuous (resp.\ absolutely continuous) for each $i$, and $\zetabd = \Gamma_M(\psibd)$, then $\zeta_i$ is continuous (resp. absolutely continuous) for each $i$.
	For $M \in \Nmb$, the structure of $R_M$ also guarantees that (cf.\ \cite[Proof of Lemma 2.2]{BudhirajaFriedlanderWu2021many} or \cite{HarrisonReiman1981reflected,DupuisIshii1991lipschitz})
	\begin{equation}
		\label{eq:Skorokhod-Lipschitz}
		\|\Gamma_M(\psibd)-\Gamma_M(\widetilde\psibd)\|_1 \le C_M\|\psibd-\widetilde\psibd\|_1
	\end{equation}
	for some $C_M \in (0,\infty)$.
\end{Remark}

Let $\Cmc$ be the subset of $\Cmb([0,T]:\bell_1^\downarrow \times \bell_1)$ consisting of all functions $(\zetabd,\psibd)$ such that
\begin{enumerate}[(i)]
\item
	$\zetabd(0)=\psibd(0)=\xbd$. $\zeta_i$ and $\psi_i$ are absolutely continuous on $[0,T]$ for each $i \in \Nmb$.
\item 
	$\zetabd = \Gamma_\infty(\psibd)$.
	That is, for some $\etabd = \{\eta_i, i \in \Nmb\} \in \Cmb([0,T]:\Rmb^{\infty})$,
	$(\zetabd, \etabd)$ solves the Skorokhod problem for $\psibd$ associated with $R_\infty$:
	\begin{equation}
		\label{eq:zeta_psi_eta}
		\zeta_i(t) = \psi_i(t) +\eta_{i-1}(t) - \eta_i(t), \; t \in [0,T],\; i\in\Nmb,
	\end{equation}
	where $\eta_0(t) \equiv 0$ and for every $i\ge 1$, $\eta_i(0)=0$, $\eta_i$ is non-decreasing, and $\int_0^T (1-\zeta_i(s)) \, \eta_i(ds) = 0$.
\end{enumerate}

\begin{Remark}
\label{rem:2.2}
In the setting of JSQ it is easy to see that $\Xbd^n = \Gamma_\infty(\Ybd^n)$ and an analogous property holds for the controlled analogues of these processes that arise in the large deviation analysis. 
This relation was important in the tightness proofs of \cite{BudhirajaFriedlanderWu2021many} (see e.g.\ the proof of Lemma 3.3 therein). 
In the setting of JSQ$(d_n)$ with $d_n <n$, an arrival to a queue of length $i$ may occur even when there are available servers with queue  lengths at most $i-1$, due to which the above identity fails to hold.  
This is one of the issues that requires a different approach in the analysis.
\end{Remark}

\begin{Remark}
\label{rem:2.3}
	From $\zetabd \in \Cmb([0,T]:\bell_1^\downarrow)$ we see that there exists a smallest $M = M(\zetabd) \in \Nmb$ such that $\sup_{t \in [0,T]} \zeta_M(t)<1.$
	Thus one only needs to consider an $M$-dimensional Skorokhod problem for $(\psi_i)_{i=1}^M$ (associated with $(\Vmb^M, R_M)$) in \eqref{eq:zeta_psi_eta}, although this $M$ will depend on the choice of $\zetabd$.
\end{Remark}

We now introduce the rate function that will govern the LDP.
Recall $\ell(z) = z\log(z)-z+1$ for $z\ge 0$, and let $\vartheta_0 := \lambda$ and $\vartheta_i := 1$ for $i\in\Nmb$.
For $(\zetabd,\psibd) \in \Cmc$, define
\begin{equation}
    \label{eqn:JSQRateFunction}
	\rate(\zetabd,\psibd) := \inf_{\varphibd\in\Smc(\zetabd,\psibd)}\left\{\sum_{i=0}^\infty\int_{\XT}\vartheta_i\ell(\varphi_i(s,y))\,ds\,dy\right\},
\end{equation}
where the set $\Smc(\zetabd,\psibd)$ consists of all $\varphibd= (\varphi_i)_{i \in \Nmb_0}$, where each $\varphi_i: [0,T] \times [0,1] \to \Rmb_+$ is such that
\begin{align}
	\psi_1(t) & = x_1 + \lambda \int_{\Xt}\varphi_0(s,y)\,ds\,dy  - \int_{\Xt}\one_{[0,\zeta_1(s)-\zeta_2(s))}(y) \varphi_{1}(s,y) \,ds\,dy, \label{eq:psi1}\\
	\psi_i(t) & = x_i - \int_{\Xt}\one_{[0,\zeta_i(s)-\zeta_{i+1}(s))}(y) \varphi_{i}(s,y) \,ds\,dy, \quad i \ge 2. \label{eq:psii}
\end{align}
For $(\zetabd,\psibd) \notin \Cmc$, define $\rate(\zetabd,\psibd) := \infty$.
Note that when $\varphi_i$ is taken to be $1$ for each $i$ in the above equations, $(\zetabd,\psibd)$ corresponds to the law of large numbers limit of the constrained and free processes $\{(\Xbd^n, \Ybd^n)\}_{n\in\Nmb}$(see \cite[Theorem 2.1]{BhamidiBudhirajaDewaskar2022near}).
Clearly, with this choice of $\{\varphi_i\}_{i\in\Nmb_0}$, the cost on the right side of \eqref{eqn:JSQRateFunction} is zero which verifies that the rate function evaluated at the LLN limit  is $0$.
For a general pair $(\zetabd,\psibd)$, the rate function is obtained by considering all controls $\{\varphi_i\}_{i\in\Nmb_0}$ that produce
the pair $(\zetabd,\psibd)$ through the system of equations in \eqref{eq:psi1}--\eqref{eq:psii} and by then taking infimum over the cost for all such controls as on the right side of \eqref{eqn:JSQRateFunction}.

\subsection{Main Result}
\label{sec:mainResult}

We begin by recalling the definition of a Large Deviation Principle.

\begin{Definition}
    Let $\Smb$ be a Polish space, $\{Z^n\}_{n \in \Nmb}$ be a sequence of $\Smb$-valued random variables, and $I$ be a function from $\Smb$ to $[0,\infty]$.
    We say that the sequence $\{Z^n\}_{n \in \Nmb}$ satisfies a large deviation principle on $\Smb$ with rate function $I$ and speed $n$ if the following three conditions hold:
    \begin{itemize}
        \item Large deviation upper bound: For each closed subset $F$ of $\Smb$,
        $$\limsup_{n \to \infty} \frac{1}{n} \log \Pmb(Z^n \in F) \le -\inf_{z \in F} I(z).$$
        \item Large deviation lower bound: For each open subset $G$ of $\Smb$,
        $$\liminf_{n \to \infty} \frac{1}{n} \log \Pmb(Z^n \in G) \ge -\inf_{z \in G} I(z).$$
        \item $I$ is a (good) rate function: For each $M \in [0,\infty)$, the level set $\{z \in \Smb : I(z) \le M\}$ is a compact subset of $\Smb$.
    \end{itemize}
\end{Definition}

We now present the main result of this work.

\begin{Theorem}
    \label{thm:mainResult}
	The function $\rate$ defined in \eqref{eqn:JSQRateFunction} is a rate function on $\pathspace$.
	The sequence $(\Xbd^n,\Ybd^n)$ satisfies a large deviation principle on $\pathspace$ with rate function $\rate$ and speed $n$.
\end{Theorem}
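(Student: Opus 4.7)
\medskip
\noi \textbf{Overall strategy.} Since $\pathspace$ is Polish, it suffices to establish the equivalent Laplace principle for bounded continuous $F \colon \pathspace \to \Rmb$, namely
\begin{equation*}
  -\lim_{n \to \infty} \frac{1}{n} \log \Emb\!\left[\exp\bigl(-n F(\Xbd^n, \Ybd^n)\bigr)\right] = \inf_{(\zetabd, \psibd) \in \pathspace} \bigl\{ F(\zetabd, \psibd) + \rate(\zetabd, \psibd)\bigr\},
\end{equation*}
together with the goodness of $\rate$. My starting point is the Budhiraja--Dupuis variational representation (to be developed in Section \ref{sec:varWeakCon}), which rewrites the left side as an infimum over admissible controls $\varphibd^n = (\varphi_i^n)_{i \in \Nmb_0} \subset \bar{\controlset}_+$ of $\Emb[F(\bar\Xbd^n, \bar\Ybd^n) + \sum_{i \ge 0} \vartheta_i \int_{\XT} \ell(\varphi_i^n(s,y))\,ds\,dy]$, where $(\bar\Xbd^n, \bar\Ybd^n)$ is the controlled analogue of \eqref{eq:state-process}--\eqref{eq:Yi} driven by $D_i^{n\varphi_i^n}$ in place of $D_i^n$ (with an extra factor $\lambda_n$ for $i=0$). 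The proof then splits into the Laplace upper bound, the Laplace lower bound, and the good rate function property.

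\medskip
\noi \textbf{Upper bound.} I would start from a near-minimizing sequence $\varphibd^n$ in the variational representation with cost bounded by some $K<\infty$. The first task is tightness of $(\bar\Xbd^n, \bar\Ybd^n, \varphibd^n)$ in $\pathspace$ together with a suitable topology on controls. The new feature compared to \cite{BudhirajaFriedlanderWu2021many} is tightness in the $\bell_1$-norm rather than the weaker product metric $d_\infty$; since $\Gamma_\infty$ is not Lipschitz on $\bell_1$, I would exploit Remark \ref{rem:2.3} to truncate at an index $M$ beyond which the tail $\sum_{i \ge M} \bar X^n_i(\cdot)$ is uniformly small (controlled by the bounded cost via $\ell$), and then appeal to the finite-dimensional Lipschitz estimate \eqref{eq:Skorokhod-Lipschitz} for $\Gamma_M$. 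For the limit characterization, although the prelimit identity $\bar\Xbd^n = \Gamma_\infty(\bar\Ybd^n)$ fails (Remark \ref{rem:2.2}), the weak limit $(\Xbd^*, \Ybd^*)$ will satisfy $\Xbd^* = \Gamma_\infty(\Ybd^*)$ (Lemma \ref{lem:convergence}): this is because $\beta_n(x) \to 0$ for every $x < 1$ when $d_n \to \infty$, so the $\beta_n$-driven contributions to \eqref{eq:eta} vanish off $\{x_i = 1\}$ and precisely reproduce the reflection term $\eta^*_i$ at the boundary. Fatou's lemma and lower semicontinuity of $\ell$ then deliver the upper bound with limiting cost $\ge \rate(\Xbd^*, \Ybd^*)$.

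\medskip
\noi \textbf{Lower bound.} Given $(\zetabd, \psibd) \in \Cmc$ with $\rate(\zetabd, \psibd) < \infty$ and $\delta > 0$, I would pick $\varphibd \in \Smc(\zetabd, \psibd)$ with cost within $\delta$ of $\rate(\zetabd, \psibd)$, feed it into the controlled system to obtain $(\bar\Xbd^n, \bar\Ybd^n)$, and show $(\bar\Xbd^n, \bar\Ybd^n) \to (\zetabd, \psibd)$ in $\pathspace$. Identification of the limit is nontrivial because of the failure of the prelimit Skorokhod identity, and therefore hinges on a uniqueness result (Lemma \ref{lem:uniqueness}) for the coupled system \eqref{eq:psi1}--\eqref{eq:psii}, \eqref{eq:zeta_psi_eta}. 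Adapting the approximation machinery of \cite{BudhirajaFriedlanderWu2021many}, I would mollify $\varphibd$ in time, truncate the state index, and bound the perturbation errors in $\bell_1$ rather than in $d_\infty$, again reducing reflection estimates to finite-dimensional Skorokhod problems via \eqref{eq:Skorokhod-Lipschitz} and Remark \ref{rem:2.3}. Concentration/PRM estimates then yield $F(\bar\Xbd^n, \bar\Ybd^n) + \text{cost}(\varphibd) \to F(\zetabd, \psibd) + \rate(\zetabd, \psibd) + \delta$, which gives the lower bound on letting $\delta \downarrow 0$.

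\medskip
\noi \textbf{Good rate function and main obstacle.} To show that each level set $\{\rate \le M\}$ is compact in $\pathspace$, I would combine equicontinuity of $(\zetabd, \psibd)$ (following from the superlinear growth of $\ell$) with uniform smallness of the $\bell_1$-tails $\sup_{t}\sum_{i \ge N} |\zeta_i(t)|$ and $\sum_{i \ge N}|\psi_i(t)|$, again exploiting the bound on the cost; lower semicontinuity of $\rate$ then follows from standard weak convergence arguments for controls. The hardest step, in my view, is the lower bound: strengthening the topology to $\bell_1$ forces all the delicate approximations in the proof of Lemma \ref{lem:uniqueness} to be quantified in a norm under which the infinite-dimensional Skorokhod map is not Lipschitz, so that one must carefully truncate to finite index and pay close attention to tail estimates throughout the mollification procedure.
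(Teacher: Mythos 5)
Your proposal is correct and follows essentially the same route as the paper: the weak-convergence/Laplace-principle framework, tightness and limit characterization via Lemmas \ref{lem:tightness} and \ref{lem:convergence} for the upper bound, the uniqueness result Lemma \ref{lem:uniqueness} for the lower bound (with the $\bell_1$-norm strengthening of the approximation machinery from \cite{BudhirajaFriedlanderWu2021many} deferred to Appendix \ref{sec:appendix}), and truncation to finite-dimensional Skorokhod maps to get compactness of sublevel sets. One small imprecision worth flagging: in the lower bound you cannot simply feed an arbitrary near-optimal $\varphibd \in \Smc(\zetabd,\psibd)$ into the controlled system and expect convergence to $(\zetabd,\psibd)$, because the uniqueness property may fail for that particular control; Lemma \ref{lem:uniqueness} is invoked precisely to first replace $(\zetabd,\psibd)$ by a nearby pair $(\bar\zetabd,\bar\psibd)$ equipped with a $\bar\varphibd$ for which the driven-trajectory identity has a unique solution — a distinction your outline gestures at (via the approximation/mollification step) but does not state cleanly.
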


\begin{proof}
	From the equivalence between a LDP and a Laplace Principle (cf.\ \cite[Section 1.2]{dupuis2011weak} and \cite[Section 1.2]{BudhirajaDupuis2019analysis}), it suffices to establish the following three statements.
	\begin{enumerate}[(1)]
	\item
		Laplace Upper Bound: For all $G\in\Cmb_b(\pathspace)$, 
		\begin{align}\label{eqn:LaplaceUpperBound}
			\limsup_{n\to\infty}\frac{1}{n}\log\Emb e^{-nG(\Xbd^n,\Ybd^n)} \leq -\inf_{(\zetabd,\psibd)\in \Cmc}\{\rate(\zetabd,\psibd)+G(\zetabd,\psibd)\}.
		\end{align}
	\item
		Laplace Lower Bound: For all $G\in\Cmb_b(\pathspace)$,
		\begin{align}\label{eqn:LaplaceLowerBound}
			\liminf_{n\to\infty}\frac{1}{n}\log\Emb e^{-nG(\Xbd^n,\Ybd^n)}\geq -\inf_{(\zetabd,\psibd)\in\Cmc}\{\rate(\zetabd,\psibd)+G(\zetabd,\psibd)\}.
		\end{align}
	\item 
		$\rate$ is a rate function, namely for each $M\in[0,\infty),\{(\zetabd,\psibd)\in \Cmc:\rate(\zetabd,\psibd)\leq M\}$ is compact.
	\end{enumerate}
	Statements (1) and (2) are proved in Sections \ref{sec:UpperBound} and \ref{sec:LowerBound}, respectively.
    The proof of the third statement is given in Section \ref{sec:compactSets}. 
\end{proof}

The LDP in Theorem \ref{thm:mainResult} is useful in obtaining estimates for probabilities of various types of rare events in the JSQ$(d_n)$ system.
In particular, the formulation in $\pathspace$ (as opposed to $\pathspacer$) allows us to obtain estimates for probabilities of rare events involving quantities such as the total number of customers waiting in the system or total number of customers in queues of lengths $k$ or higher.
We illustrate the idea through the following example in the critical regime ($\lambda=1$) with all servers (asymptotically) busy ($x_1=1$). 
These two assumptions lead to some simplifications in the associated calculus of variations problem that we exploit.
Proofs will be given in Section \ref{sec:examples}.
 
\begin{Theorem}
	\label{thm:large-customers}
	Suppose $\lambda=1$ and $x_1=1$. 
	Fix $\varepsilon > 0$ and let
	\begin{align*}
		G^n_\varepsilon & := \{\|\Xbd^n(t)\|_1 > \|\xbd^n\|_1 + \varepsilon \mbox{ for some } t \in [0,T]\}, \\ 
		F^n_\varepsilon & := \{\|\Xbd^n(t)\|_1 \ge \|\xbd^n\|_1 + \varepsilon \mbox{ for some } t \in [0,T]\}.
	\end{align*}
	Then 
	\begin{equation*}
		\lim_{n\to\infty}\frac{1}{n}\log\Pmb(G^n_\varepsilon) =
		\lim_{n\to\infty}\frac{1}{n}\log\Pmb(F^n_\varepsilon) = - T\ell\left( \frac{\frac{\varepsilon}{T}+\sqrt{4+(\frac{\varepsilon}{T})^2}}{2} \right) - T\ell\left( \frac{-\frac{\varepsilon}{T}+\sqrt{4+(\frac{\varepsilon}{T})^2}}{2} \right)
	\end{equation*}
	and
	\begin{equation*}
		\lim_{T \to \infty}\lim_{n\to\infty}\frac{T}{n}\log(\Pmb(G^n_\varepsilon)) = \lim_{T \to \infty}\lim_{n\to\infty}\frac{T}{n}\log(\Pmb(F^n_\varepsilon)) = -\frac{\varepsilon^2}{4}.
	\end{equation*}
\end{Theorem}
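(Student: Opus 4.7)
The approach is to combine the LDP of Theorem \ref{thm:mainResult} with a calculus-of-variations analysis, exploiting the convexity of $\ell$ and the additive structure of $\|\zetabd(t)\|_1$. Since $\|\cdot\|_1$ is continuous on $\bell_1^\downarrow$, the functional $(\zetabd,\psibd) \mapsto \sup_{t\in[0,T]}\|\zetabd(t)\|_1$ is continuous on $\pathspace$ (for continuous limits, Skorokhod convergence upgrades to uniform convergence), so the set $\{\sup_t\|\zetabd(t)\|_1 \ge c\}$ is closed and its strict-inequality analogue is open. Using $\|\xbd^n\|_1 \to \|\xbd\|_1$ and squeezing with $\varepsilon' \uparrow \varepsilon$ on the upper bound and $\varepsilon' \downarrow \varepsilon$ on the lower bound, Theorem \ref{thm:mainResult} reduces both probabilistic limits to the single variational quantity $J(\varepsilon) := \inf\{\rate(\zetabd,\psibd) : (\zetabd,\psibd) \in \Cmc,\ \sup_{t}\|\zetabd(t)\|_1 \ge \|\xbd\|_1 + \varepsilon\}$.

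The key analytic reduction comes from a telescoping identity: by Remark \ref{rem:2.3}, $\eta_i \equiv 0$ for $i$ beyond some finite $M$, so summing \eqref{eq:zeta_psi_eta} and invoking \eqref{eq:psi1}--\eqref{eq:psii} gives
\[
  \|\zetabd(t)\|_1 = \|\xbd\|_1 + A(t) - D(t),
\]
where $A(t) := \int_{\Xt}\varphi_0(s,y)\,ds\,dy$ and $D(t) := \sum_{i\ge1}\int_{\Xt}\one_{[0,\zeta_i(s)-\zeta_{i+1}(s))}(y)\varphi_i(s,y)\,ds\,dy$. A first Jensen application to $\int_0^1\ell(\varphi_0(s,y))\,dy$ shows that the optimizer takes $\varphi_0(s,y) = A'(s)$ uniformly in $y$, contributing $\int_0^T \ell(A'(s))\,ds$; a second Jensen application to the departure costs --- setting $\varphi_i \equiv 1$ off $[0,\zeta_i-\zeta_{i+1})$ (zero cost) and applying the weighted Jensen across $i$ with weights $(\zeta_i-\zeta_{i+1})/\zeta_1$ --- yields
\[
  \rate(\zetabd,\psibd) \ge \int_0^T \Bigl[\ell(A'(s)) + \zeta_1(s)\,\ell\bigl(D'(s)/\zeta_1(s)\bigr)\Bigr]\,ds.
\]

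With $x_1 = 1$, the Skorokhod reflection at level $1$ is the only mechanism that transports mass from $\psi_1$ into the higher-indexed coordinates, and it deactivates as soon as $\zeta_1 < 1$; a direct inspection of $\dot\zeta_2 = -B_2' + \eta_1' - \eta_2'$ shows that on the set where $\zeta_1<1$, the higher $\zeta_i$ can only decrease, which kills the growth of $\|\zetabd\|_1$. Hence the optimizer must keep $\zeta_1 \equiv 1$, simplifying the integrand to $\ell(A'(s))+\ell(D'(s))$. A further Jensen step in time together with the choice $t^\star = T$ collapses the problem to minimizing $T[\ell(a)+\ell(b)]$ over $a,b \ge 0$ with $a-b = \varepsilon/T$; the Lagrange condition $\log a + \log b = 0$, i.e.\ $ab = 1$, then produces the stated $a,b$. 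The large-$T$ asymptotic follows from the Taylor expansion $\ell(1\pm u) = u^2/2 + O(u^3)$, which delivers $T[\ell(a)+\ell(b)] = \varepsilon^2/(4T) + O(T^{-2})$, so multiplying by $T$ recovers $-\varepsilon^2/4$. The main obstacle will be producing the matching upper bound on $J(\varepsilon)$ (the large-deviation lower bound on $\Pmb(G^n_\varepsilon)$): one must construct an admissible near-optimal trajectory $(\zetabd^\star,\psibd^\star) \in \Cmc$ with $\zeta_1^\star \equiv 1$, constant rates $(a,b)$, and $\|\zetabd^\star(T)\|_1 > \|\xbd\|_1 + \varepsilon$. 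This is precisely the task served by Lemma \ref{lem:special-well-posedness}, which gives well-posedness of the associated state-feedback reflected infinite system and thereby the explicit ``most likely'' trajectory.
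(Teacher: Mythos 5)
Your overall strategy matches the paper's: invoke Theorem \ref{thm:mainResult} to reduce the probability asymptotics to a single variational quantity, use Jensen's inequality to collapse the infinite-dimensional cost to a two-variable problem, apply a Lagrange multiplier calculation, and obtain the large-$T$ limit from a Taylor expansion of $\ell$. You also correctly identify both the telescoping identity $\|\zetabd(t)\|_1 = \|\xbd\|_1 + A(t) - D(t)$ and the crucial structural fact that the reduction hinges on restricting attention to trajectories with $\zeta_1 \equiv 1$, which you recognize requires the well-posedness result of Lemma \ref{lem:special-well-posedness} to build the matching near-optimal trajectory.

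However, there is a genuine gap in your justification that the minimizer must have $\zeta_1 \equiv 1$. You argue that on $\{\zeta_1 < 1\}$ the higher coordinates can only decrease ``which kills the growth of $\|\zetabd\|_1$.'' This does not follow: whenever $\zeta_1 < 1$ all reflection terms vanish ($\eta_i' = 0$ for all $i$) and the telescoping identity still gives $\|\zetabd(\cdot)\|_1' = A' - D'$, so $\|\zetabd\|_1$ can grow at exactly the same rate as when $\zeta_1 = 1$ by pushing $A'$ up. Indeed your own lower bound $\rate \ge \int_0^T [\ell(A') + \zeta_1\ell(D'/\zeta_1)]$ cuts against you: since $\partial_{\zeta_1}[\zeta_1\ell(D'/\zeta_1)] = 1 - D'/\zeta_1$ and the optimal departure rate $b^* < 1$, lowering $\zeta_1$ below $1$ while keeping $D' < \zeta_1$ \emph{reduces} the integrand. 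So the claim that $\zeta_1 \equiv 1$ is optimal cannot be settled by this static cost comparison alone; one must exploit the dynamical coupling between the controls and $\zeta_1$. The paper does this by a replacement argument: given any admissible $(\zetabd,\psibd)$ with master service control $\theta$ and arrival control $\varphi_0$, it builds a new pair $(\bar\zetabd,\bar\psibd)$ driven by the truncated controls $\thetabar = \theta \wedge 1$ and $\varphibar_0 = \varphi_0 \vee 1$ (so $\psibar_1$ is nondecreasing, forcing $\bar\zeta_1 \equiv 1$), verifies via the monotonicity of $\ell$ on each side of $1$ that the cost does not increase, and — this is the delicate part — verifies $\|\bar\zetabd(\tau)\|_1 \ge \|\zetabd(\tau)\|_1$ by decomposing $\{s: \zeta_1(s) < 1\}$ into open intervals $(a_k,b_k)$ with $\zeta_1(a_k) = \zeta_1(b_k) = 1$ and showing the net contribution on each interval is nonnegative (because $\psi_1(b_k) - \psi_1(a_k) = 0$ while the higher coordinates only lose mass there). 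Your proposal would need to supply this comparison argument explicitly; the heuristic as written is not a valid substitute and, as noted, is false in the sense stated.
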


As an immediate corollary, Theorem \ref{thm:large-customers} gives large deviation estimates for probabilities of rare events involving long queues in the system.

\begin{Corollary}
    \label{cor:long-queue}
    Suppose $\lambda=1$, $x^n_1 = 1 = x_1$ and $x^n_i = 0 = x_i$ for $i\geq 2$ and $n \in \Nmb$.
	Fix $j \ge 3$ and let
	\begin{equation*}
		U^n_j := \{X_j^n(t) > 0 \mbox{ for some } t \in [0,T]\}, \quad V^n_j := \{X_{j-1}^n(t) = 1 \mbox{ for some } t \in [0,T]\}.
	\end{equation*}
    \begin{enumerate}[(a)]
    \item 
        Then
        \begin{equation}\label{eq:eq332r}
    	\begin{aligned}
    		\liminf_{n\to\infty}\frac{1}{n}\log\Pmb(U^n_j) & \ge - T\ell\left( \frac{\frac{j-2}{T}+\sqrt{4+(\frac{j-2}{T})^2}}{2} \right) - T\ell\left( \frac{-\frac{j-2}{T}+\sqrt{4+(\frac{j-2}{T})^2}}{2} \right), \\
    		\limsup_{n\to\infty}\frac{1}{n}\log\Pmb(V^n_j)
    		& \le - T\ell\left( \frac{\frac{j-2}{T}+\sqrt{4+(\frac{j-2}{T})^2}}{2} \right) - T\ell\left( \frac{-\frac{j-2}{T}+\sqrt{4+(\frac{j-2}{T})^2}}{2} \right).
    	\end{aligned}  
        \end{equation}
    \item 
        Suppose $d_n=n$. Then the above inequalities are equalities, namely,
    	\begin{align*}
    		\lim_{n\to\infty}\frac{1}{n}\log\Pmb(U^n_j) & =
    		\lim_{n\to\infty}\frac{1}{n}\log\Pmb(V^n_j) \\
    		& = - T\ell\left( \frac{\frac{j-2}{T}+\sqrt{4+(\frac{j-2}{T})^2}}{2} \right) - T\ell\left( \frac{-\frac{j-2}{T}+\sqrt{4+(\frac{j-2}{T})^2}}{2} \right)
    	\end{align*}
    	and
    	\begin{equation*}
    		\lim_{T \to \infty}\lim_{n\to\infty}\frac{T}{n}\log(\Pmb(U^n_j)) = \lim_{T \to \infty}\lim_{n\to\infty}\frac{T}{n}\log(\Pmb(V^n_j)) = -\frac{(j-2)^2}{4}.
    	\end{equation*}       
    \end{enumerate}
\end{Corollary}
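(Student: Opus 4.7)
The plan is to reduce both $U^n_j$ and $V^n_j$ to the events $G^n_{j-2}$ and $F^n_{j-2}$ from Theorem \ref{thm:large-customers} (with $\varepsilon = j-2$), via simple pathwise set inclusions, so that the corollary follows immediately from that theorem.

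For the bound on $V^n_j$, I would observe that under the given initial condition $\|\xbd^n\|_1 = 1$, and that whenever $X^n_{j-1}(t)=1$, every server has at least $j-1$ jobs in its queue, so
\begin{equation*}
\|\Xbd^n(t)\|_1 \;=\; \sum_{i\ge 1} X^n_i(t) \;\ge\; \sum_{i=1}^{j-1} X^n_i(t) \;=\; j-1 \;=\; \|\xbd^n\|_1 + (j-2).
\end{equation*}
Hence $V^n_j \subseteq F^n_{j-2}$, and the $\limsup$ bound in \eqref{eq:eq332r} is immediate from Theorem \ref{thm:large-customers}. For the $\liminf$ bound on $U^n_j$, I would use the contrapositive observation that if every queue has length at most $j-1$ then the total mass satisfies $\|\Xbd^n(t)\|_1 \le j-1$. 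Equivalently, $\|\Xbd^n(t)\|_1 > j-1 = \|\xbd^n\|_1 + (j-2)$ forces some queue to have length at least $j$, so $X^n_j(t)>0$; thus $G^n_{j-2} \subseteq U^n_j$, and Theorem \ref{thm:large-customers} gives the claimed lower bound. This establishes part (a).

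For part (b), where $d_n = n$, the only extra input needed is the reverse inclusion $U^n_j \subseteq V^n_j$. I would argue pathwise: let $\tau := \inf\{t\ge 0 : X^n_j(t)>0\}$. Since $j\ge 3$ and $X^n_j(0)=0$, we have $\tau>0$ on $U^n_j$, and at $\tau$ some queue jumps from length $j-1$ to length $j$. Under the JSQ policy ($d_n=n$) an incoming job is routed to a shortest queue, so for this jump to occur the shortest queue just before $\tau$ must have length exactly $j-1$, which means $X^n_{j-1}(\tau-)=1$. Because $X^n_{j-1}$ is c\`adl\`ag and its jumps are of size $\pm 1/n$, the left limit being equal to $1$ implies $X^n_{j-1}(s)=1$ on some left-neighborhood of $\tau$, so $V^n_j$ occurs. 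Combining the three inclusions gives
\begin{equation*}
G^n_{j-2} \;\subseteq\; U^n_j \;\subseteq\; V^n_j \;\subseteq\; F^n_{j-2},
\end{equation*}
and since Theorem \ref{thm:large-customers} gives the same exponential rate for the two extremes, all four probabilities decay at the common rate, proving the finite-$T$ equality and its $T\to\infty$ consequence.

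The routine part is verifying the set inclusions from the first two paragraphs; these are purely combinatorial once one recognizes that $\|\Xbd^n(t)\|_1$ is the normalized total workload. The main obstacle, and the only place where the hypothesis $d_n = n$ is used, is the pathwise argument that $U^n_j \subseteq V^n_j$ in part (b). Care is needed both to justify that the queue receiving the ``first'' $j$-th job must itself have had length $j-1$ (this uses that $X^n_j$ only jumps up by $+1/n$ from $X^n_j(\tau-)=0$) and to pass from $X^n_{j-1}(\tau-)=1$ to an actual time $t\in[0,T]$ at which $X^n_{j-1}(t)=1$, using the discrete jump structure of the process; the hypothesis $j\ge 3$ is precisely what ensures $X^n_{j-1}(0)=0$ so that $\tau>0$ and the left limit argument applies.
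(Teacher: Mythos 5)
Your proof of part (a) uses precisely the same two inclusions $G^n_{j-2} \subseteq U^n_j$ and $V^n_j \subseteq F^n_{j-2}$ that the paper records, and your verification of them (via the elementary inequalities relating $\|\Xbd^n(t)\|_1$ to the largest queue length and to $X^n_{j-1}(t)=1$) is correct.

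For part (b) you take a genuinely different, and in fact slightly leaner, route. The paper asserts the two set \emph{equalities} $U^n_j = G^n_{j-2}$ and $V^n_j = F^n_{j-2}$ (which require showing, under JSQ from $(1,0,\ldots)$, both that reaching queue length $j$ forces $\|\Xbd^n\|_1 > j-1$ at that instant and that $\|\Xbd^n\|_1$ first reaching $j-1$ forces all queues to have length $\ge j-1$), and then applies Theorem \ref{thm:large-customers} to each separately. You instead prove only the single extra inclusion $U^n_j \subseteq V^n_j$ — using the clean observation that under pure JSQ the first job to push some queue to length $j$ must have arrived when the shortest queue already had length $j-1$, so $X^n_{j-1}(\tau-)=1$, and the piecewise-constant, finitely-many-jumps structure upgrades the left limit to an actual value on a left neighborhood of $\tau$ — and then obtain the common rate by squeezing along the chain $G^n_{j-2} \subseteq U^n_j \subseteq V^n_j \subseteq F^n_{j-2}$. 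Both arguments are valid and equally short; yours has the advantage of reusing part (a) and isolating in a single place the JSQ-specific structural fact (and it correctly flags that $U^n_j \subseteq V^n_j$ is exactly the inclusion that fails for $d_n < n$, consistent with Remark \ref{rem:diffdec}), while the paper's version makes explicit that the hitting events are literally the same under $d_n = n$.
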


\begin{proof}
    (a)
        The result follows from Theorem \ref{thm:large-customers} on noting that $U_j^n \supset G_{j-2}^n$ and $V_j^n \subset F_{j-2}^n$.
        
    (b) 
        The result follows from Theorem \ref{thm:large-customers} on noting that $U_j^n = G_{j-2}^n$ and $V_j^n = F_{j-2}^n$ when $d_n=n$ and $\xbd^n=(1,0,0,\dotsc)$.
\end{proof}

\begin{Remark}
    Corollary \ref{cor:long-queue}(b) was proved in \cite[Theorem 2.5]{BudhirajaFriedlanderWu2021many} by solving the associated calculus of variation problem.
    Here it follows as an immediate consequence of the general result in Theorem \ref{thm:large-customers}.
\end{Remark}

\begin{Remark}
\label{rem:diffdec}
One may wonder whether the inequalities \eqref{eq:eq332r} can be replaced by equalities, namely $\Pmb(U^n_j)$ and $\Pmb(V^n_j)$ for general $d_n$ have the   same asymptotic behavior as in the case $d_n=n$.
Note that the relation $U_j^n \subset V_j^n$, which holds when $d_n=n$, fails for general $d_n \to \infty$. However, one would still be able to replace the inequalities in \eqref{eq:eq332r} with equalities if the event $U_j^n \setminus V_j^n$ was exponentially negligible.
Unfortunately, this is not true in general.
Consider for example, $j=3$. We will show that
\begin{equation}\label{eq:300}\lim_{n \to \infty} \frac{1}{n} \log \Pmb(U^n_3) = 0\end{equation}
as long as $d_n = o(n)$ and consequently in this case the first inequality in \eqref{eq:eq332r} is strict.
Denote by $A^n$ the event that there are $d_n+1$ arrivals before the first departure (so that after the first $d_n$ arrivals we will have $d_n$ queues of length $2$ and remaining $n-d_n$ queues of length $1$); the $(d_n+1)$-th arrival goes to some queue with length $2$ (namely those $d_n$ queues with length $2$ are chosen); and all these $d_n+1$ jumps occur before time $T$.
Then we have
$$\Pmb(U^n_3) \ge \Pmb(A^n) = \left(\frac{1}{2}\right)^{d_n} \cdot \frac{1}{2} \frac{1}{\binom{n}{d_n}} \cdot c_n.$$
Here $\left(\frac{1}{2}\right)^{d_n}$ is the probability that the first $d_n$ arrivals occur before the first departure, $\frac{1}{2} \frac{1}{\binom{n}{d_n}}$ is the probability that the $(d_n+1)$-th arrival occurs before the first departure and goes to some queue with length $2$, and $c_n := \Pmb(\textnormal{Gamma}(d_n+1,2n) \le T)$ is the probability that these happen before time $T$.
Then
\begin{equation*}
    \frac{1}{n} \log \Pmb(U^n_3) \ge \frac{d_n+1}{n} \log \frac{1}{2} + \frac{1}{n} \log \frac{1}{\binom{n}{d_n}} + \frac{1}{n} \log c_n.
\end{equation*}
Since $d_n=o(n)$, we have $\frac{d_n+1}{n} \log \frac{1}{2} \to 0$ and $\frac{1}{n} \log c_n \to 0$ as $n \to \infty$.
As for the middle term, by Stirling's formula, we have
\begin{align*}
    \lim_{n \to \infty} \frac{1}{n} \log \frac{1}{\binom{n}{d_n}} & = \lim_{n \to \infty} \frac{1}{n} \log \frac{(n-d_n)! d_n !}{n!} = \lim_{n \to \infty} \frac{1}{n} \log \frac{\left(\frac{n-d_n}{e}\right)^{n-d_n}\sqrt{n-d_n} \left(\frac{d_n}{e}\right)^{d_n}\sqrt{d_n}}{\left(\frac{n}{e}\right)^{n}\sqrt{n}} \\
    & = \lim_{n \to \infty} \frac{n-d_n}{n} \log \frac{n-d_n}{n} + \lim_{n \to \infty} \frac{d_n}{n} \log \frac{d_n}{n} = 0.
\end{align*}
Therefore $$\liminf_{n \to \infty} \frac{1}{n} \log \Pmb(U^n_3) \ge 0,$$ which gives the claimed equality in \eqref{eq:300}.

In fact, when $d_n=o(n)$, similar arguments as above show that
$$\liminf_{n \to \infty} \frac{1}{\theta_n} \log \Pmb(U^n_3) \ge 0$$
for all $\theta_n \gg d_n |\log \frac{d_n}{n}|$.
For example, if $d_n=\sqrt{n}$, then $\theta_n \gg \sqrt{n}\log n$ suffices.

We leave as open the question whether the inequalities in \eqref{eq:eq332r} can be replaced by equalities when $d_n=\Theta(n)$.
\end{Remark}

\section{Representation and Weak Convergence of Controlled Processes}\label{sec:varWeakCon}
In this section we give several preparatory results that are needed for the proofs of both the upper and the lower bounds (i.e.  \eqref{eqn:LaplaceUpperBound} and \eqref{eqn:LaplaceLowerBound}).
Section \ref{sec:varRep} presents a variational representation from \cite{budhiraja2011variational} (see also \cite[Theorem 8.12]{BudhirajaDupuis2019analysis}) that is the starting point of our analysis.
In Section \ref{sec:tight} we prove tightness of certain families of controls and controlled processes which arise from the variational representation of Section \ref{sec:varRep}.
Finally, Section \ref{sec:weakconv} presents a result which characterizes the distributional limit points of this collection of processes.

\subsection{Variational Representation}\label{sec:varRep}
Recall that $\bar{\controlset}_+$ denotes the class of $(\bar{\Fmc}\otimes \Bmc([0, 1]))/\Bmc(\Rmb_+)$-measurable
maps from $\Omega\times[0, T]\times [0, 1]$ to $\Rmb_+$.
For each $m\in\Nmb$ let
\begin{align*}
	\bar{\controlset}_{b,m}
	&:= \{(\varphi_i)_{i\in\Nmb_0}:\varphi_i\in\bar{\controlset}_+\text{ for all }i\in\Nmb_0, \text{  for all } (\omega,t,y)\in\Omega\times[0,T]\times[0,1]\\
	&\qquad \frac{1}{m}\leq \varphi_i(\omega,t,y)\leq m \text{ for }i\leq m\text{ and } \varphi_i(\omega,t,y)=1 \text{ for } i>m\}
\end{align*}
and let $\bar{\controlset}_b:=\cup_{m=1}^\infty\bar{\controlset}_{b,m}$.
For each $n \in \Nmb$ and any $\varphibd^n\in\bar{\controlset}_b$ we denote by $(\bar\Xbd^{n,\varphibd^n},\bar\Ybd^{n,\varphibd^n},\bar\etabd^{n,\varphibd^n})$ the controlled analogues of $(\Xbd^n,\Ybd^n,\etabd^n)$ obtained by replacing the PRMs in \eqref{eq:Y1}--\eqref{eq:eta} with controlled point processes, $D_0^{n\lambda_n\varphi^n_0}$ and $D_i^{n\varphi^n_i}, i\in\Nmb$.
Namely, the state evolution equations for the controlled processes are as follows,
\begin{equation}
	\label{eq:controlled-process}
	\begin{aligned}
		\Xbar_i^{n,\varphibd^n}(t) & = \Ybar_i^{n,\varphibd^n}(t) + \etabar_{i-1}^{n,\varphibd^n}(t) - \etabar_i^{n,\varphibd^n}(t), \quad i \ge 1,\\
		\Ybar_1^{n,\varphibd^n}(t) & = x_1^n + \frac{1}{n} \int_{\Xt} D_{0}^{n\lambda_n\varphi^n_0}(ds\,dy) - \frac{1}{n} \int_{\Xt} \one_{[0,\Xbar^{n,\varphibd^n}_1(s-)-\Xbar^{n,\varphibd^n}_2(s-))}(y) D_1^{n\varphi^n_1}(ds\,dy), \\
		\Ybar_i^{n,\varphibd^n}(t) & = x_i^n - \frac{1}{n} \int_{\Xt} \one_{[0,\Xbar^{n,\varphibd^n}_i(s-)-\Xbar^{n,\varphibd^n}_{i+1}(s-))}(y) D_i^{n\varphi^n_i}(ds\,dy), \quad i \ge 2, \\
		\etabar_i^{n,\varphibd^n}(t) & =\frac{1}{n} \int_{\Xt} \one_{[0, \beta_n(\Xbar^{n,\varphibd^n}_i(s-)))}(y) D_{0}^{n\lambda_n\varphi^n_0}(ds\,dy), \quad i \ge 1,
	\end{aligned} 
\end{equation}
where $\Xbar_0^{n,\varphibd^n}(t) \equiv 1$ and $\etabar_0^{n,\varphibd^n} \equiv 0$ for all $t \in [0,T]$.
When it is clear from context which controls are being used we may simply write $(\bar\Xbd^{n},\bar\Ybd^{n},\bar\etabd^{n})$ to represent the controlled processes.

	Let $\vartheta^n_0 :=\lambda_n$ and $\vartheta^n_i := 1$ for $i\in\Nmb$.
	The following variational representation will be instrumental in proving both the upper and the lower bounds, namely \eqref{eqn:LaplaceUpperBound} and \eqref{eqn:LaplaceLowerBound}.
	For a proof we refer the reader to  \cite[Theorem 2.1]{budhiraja2011variational}, 
    \cite[Theorem 8.2]{BudhirajaDupuis2019analysis} and comments above \cite[Lemma 3.1]{BudhirajaFriedlanderWu2021many}. 
    
\begin{Lemma}
    \label{lem:varRep}
	Let $G\in\Cmb_b(\pathspace)$. Then
	\begin{align}\label{eqn:variationalRep}
		\begin{split}
		-\frac{1}{n} \log \Emb e^{-nG(\Xbd^n,\Ybd^n)} &= \inf_{\varphibd^n\in\bar{\controlset}_b} \Emb \left\{\sum_{i=0}^\infty \int_{\XT}\vartheta^n_i \ell(\varphi_i^n(s,y))\,ds\,dy + G(\bar\Xbd^n,\bar\Ybd^n) \right\}.
		\end{split}
	\end{align}
\end{Lemma}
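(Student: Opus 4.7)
The plan is to derive the equality from the general Budhiraja-Dupuis variational representation for bounded measurable functionals of a countable family of independent Poisson random measures, and then show that the infimum over the natural class of admissible control sequences can be restricted to the smaller class $\bar{\controlset}_b$ without changing its value.

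First, I would view $(\Xbd^n,\Ybd^n)$ as a measurable functional of the family $\{D_i\}_{i \in \Nmb_0}$ via the evolution equations \eqref{eq:Y1}--\eqref{eq:eta}, with $D_0$ carrying the intensity scaling $n\lambda_n$ and each $D_i$ ($i \ge 1$) carrying the intensity scaling $n$. Stacking these countably many independent PRMs into a single PRM on an augmented space and applying the representation of \cite[Theorem 8.12]{BudhirajaDupuis2019analysis} (equivalently \cite[Theorem 2.1]{budhiraja2011variational}) yields
\begin{equation*}
    -\frac{1}{n}\log\Emb e^{-nG(\Xbd^n,\Ybd^n)} = \inf_{\varphibd^n} \Emb\left\{\sum_{i=0}^\infty \int_{\XT}\vartheta^n_i \ell(\varphi_i^n(s,y))\,ds\,dy + G(\bar\Xbd^{n,\varphibd^n},\bar\Ybd^{n,\varphibd^n})\right\},
\end{equation*}
where the infimum is over all sequences $\varphibd^n=(\varphi_i^n)_{i\in\Nmb_0}$ with each $\varphi_i^n \in \bar{\controlset}_+$, and the controlled processes are defined by \eqref{eq:controlled-process}.

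It remains to show that the infimum is unchanged when restricted to $\bar{\controlset}_b = \bigcup_{m \ge 1} \bar{\controlset}_{b,m}$. One inequality is trivial since $\bar{\controlset}_b$ is contained in the larger class, and I would prove the reverse by approximation: given any admissible sequence $\varphibd$ with finite cost, define $\varphibd^{(m)} \in \bar{\controlset}_{b,m}$ by $\varphi_i^{(m)} := (\varphi_i \vee m^{-1}) \wedge m$ for $i \le m$ and $\varphi_i^{(m)} := 1$ for $i > m$. Two things must be verified: (a) cost convergence, $\sum_i \vartheta_i^n \int_{\XT} \ell(\varphi_i^{(m)}) \to \sum_i \vartheta_i^n \int_{\XT} \ell(\varphi_i)$ as $m \to \infty$, which follows from dominated convergence using $\ell(1)=0$ and the fact that $\ell$ is decreasing on $(0,1)$ and increasing on $(1,\infty)$ (so $\ell(\varphi_i^{(m)}) \le \ell(\varphi_i)$ coordinatewise); and (b) convergence of the controlled processes $(\bar\Xbd^{n,\varphibd^{(m)}},\bar\Ybd^{n,\varphibd^{(m)}}) \to (\bar\Xbd^{n,\varphibd},\bar\Ybd^{n,\varphibd})$ in $\pathspace$ in probability, whence boundedness and continuity of $G$ yield convergence of the expectations.

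The hard part is (b), since $\pathspace$ is topologized by the $\bell_1$-norm and the state is infinite-dimensional. The key step is a uniform-in-$m$ tail estimate: from $\beta_n(x) \le x$, the ordering $\Xbar_i^{n,\varphibd^{(m)}} \le \Xbar_{i-1}^{n,\varphibd^{(m)}}$ preserved by the dynamics, and the assumption $\xbd^n \in \bell_1^\downarrow$, one can establish that $\Emb \sum_{i \ge m} \sup_{t \le T} \Xbar_i^{n,\varphibd^{(m)}}(t) \to 0$ as $m \to \infty$ along any sequence of controls whose cost stays below a fixed threshold. For the first $m$ coordinates, the pointwise convergence $\varphi_i^{(m)} \to \varphi_i$ on $\{0 < \varphi_i < \infty\}$ (a full-measure set under the finite-cost assumption) together with a standard Girsanov-type argument for controlled PRMs gives convergence in probability of the finite-dimensional marginals, and the tail estimate then promotes this to convergence in $\pathspace$.
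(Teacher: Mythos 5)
The paper's ``proof'' of Lemma~\ref{lem:varRep} is a one-line citation to \cite[Theorem 2.1]{budhiraja2011variational}, \cite[Theorem 8.2]{BudhirajaDupuis2019analysis}, and the discussion above Lemma~3.1 of \cite{BudhirajaFriedlanderWu2021many}; these references in fact already phrase the representation in terms of the class $\bar{\controlset}_b$ (or an equivalent one) and also handle the identification of the controlled functional with $G(\bar\Xbd^n,\bar\Ybd^n)$, so no additional reduction argument is carried out in the paper. Your proposal is therefore not a match for the paper's route: you start from a version of the representation over the larger class $\bar{\controlset}_+$ and then try to prove, by hand, the reduction to $\bar{\controlset}_b$.

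That reduction, as sketched, has a genuine gap in step~(b). First, you invoke ``a standard Girsanov-type argument for controlled PRMs'' to get convergence of the controlled processes as $m\to\infty$. This is the wrong tool: $\bar\Xbd^{n,\varphibd^{(m)}}$ and $\bar\Xbd^{n,\varphibd}$ are driven pathwise by the \emph{same} Poisson random measures $\{D_i\}$, with the control $\varphi$ acting as a thinning rate, so what is really needed is a pathwise stability (Gronwall-type) estimate for the jump SDE \eqref{eq:controlled-process} under a.e.\ pointwise perturbation of $\varphi$, together with a careful treatment of the discontinuous indicator terms $\one_{[0,\bar X_i - \bar X_{i+1})}$ and $\one_{[0,\beta_n(\bar X_i))}$ whose arguments themselves depend on the controlled state. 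Girsanov reasoning changes the measure, not the pathwise solution, and does not directly give convergence of the paths. Second, your claimed uniform-in-$m$ tail estimate is of the form $\Emb\sum_{i\ge m}\sup_{t\le T}\bar X^{n,\varphibd^{(m)}}_i(t)\to 0$, i.e.\ with expectation of the supremum; the estimates that are actually achievable here (and used in Lemma~\ref{lem:tightness}) control $\sup_t\Emb$, and upgrading this to $\Emb\sup_t$ would need an additional maximal-inequality argument. Third, the controlled SDE \eqref{eq:controlled-process} is stated in the paper only for controls in $\bar{\controlset}_b$; to make sense of $(\bar\Xbd^{n,\varphibd},\bar\Ybd^{n,\varphibd})$ for a general finite-cost $\varphibd\in\bar{\controlset}_+$ you must first establish that the controlled system is well posed for such controls (in particular, that the $D_0^{n\lambda_n\varphi_0}$-driven terms do not accumulate infinitely many jumps). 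None of these points is addressed, and they are exactly the content that the cited theorems package up --- which is why the paper, like \cite{BudhirajaFriedlanderWu2021many}, proves this lemma by reference rather than from scratch.
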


\subsection{Tightness}\label{sec:tight}
In this section we prove a key tightness result which says that if the costs are appropriately bounded then the corresponding collection of controls and controlled processes is tight.
We begin by describing the topology on the space of controls.
For $M \in (0,\infty)$, denote by $S_M$ the collection of all  $\hbd= \{h_i\}_{i\in \Nmb_0}$, where $h_i: [0,T]\times [0,1] \to \Rmb_+$ for each $i \in \Nmb_0$ and 
$$
\sum_{i=0}^\infty \int_{\XT} \ell(h_i(s,y))\,ds\,dy\leq M.$$
Any $h_i$ as above can be identified with a finite measure $\nu^{h_i}$ on $[0,T]\times [0,1]$ by the following relation
$$\nu^{h_i}(\PRMspace) := \int_{\PRMspace} h_i(s,y)\,ds\,dy, \; \PRMspace \subset \Bmc([0,T]\times [0,1]).$$
The space $\Mmb$ of finite measures on $[0,T]\times[0,1]$ is equipped with the weak convergence topology and the space $\Mmb^{\infty}$ is equipped with the corresponding product topology.
Using the above identification, each element in $S_M$ can be mapped to an element of the Polish space $\Mmb^{\infty}$ and the space $S_M$ with the inherited topology is compact (see \cite[Lemma A.1]{budhiraja2013large}).

We record the following elementary lemma for future use. Proof is omitted.
\begin{Lemma}
	\label{lem:ellProp}
	Let $\ell(x) = x\log(x)-x+1$. Then the following properties hold for $\ell(x)$:
	\begin{enumerate}[(a)]
	\item
		For each $K>0$, there exists $\gamma(K)\in(0,\infty)$ such that $\gamma(K)\to0$ as $K\to\infty$ and $x\leq \gamma(K)\ell(x)$, for $x\geq K$.
	\item
		For $x\geq0$, $x\leq\ell(x)+2$.
	\end{enumerate}
\end{Lemma}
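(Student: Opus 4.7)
Both parts concern only the scalar function $\ell(x) = x\log x - x + 1$, so the plan is to treat them independently as elementary one-variable calculus exercises.

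For part (a), the key observation is that
\[
\frac{\ell(x)}{x} = \log x - 1 + \frac{1}{x} \longrightarrow \infty \quad \text{as } x \to \infty,
\]
equivalently, $x/\ell(x) \to 0$. I would simply set
\[
\gamma(K) := \sup_{x \ge K} \frac{x}{\ell(x)},
\]
which is finite once $K$ is large enough for $\ell$ to be bounded away from $0$ on $[K,\infty)$ (any $K > 1$ suffices, since $\ell$ is strictly convex with unique minimum $\ell(1)=0$ and is strictly increasing on $[1,\infty)$). Monotonicity of the supremum in $K$ together with the limit above gives $\gamma(K)\downarrow 0$. An explicit bound makes this quantitative: for $K \ge e^2$ and $x \ge K$, $\log x \ge 2$ implies $\ell(x) = x(\log x - 1) + 1 \ge \tfrac{1}{2} x \log x$, hence $\gamma(K) \le 2/\log K$. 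The inequality $x \le \gamma(K)\ell(x)$ for $x \ge K$ is then automatic from the definition.

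For part (b), I would show that
\[
f(x) := \ell(x) + 2 - x = x\log x - 2x + 3
\]
is nonnegative on $[0,\infty)$. Differentiating, $f'(x) = \log x - 1$, so $f$ is strictly decreasing on $(0,e)$ and strictly increasing on $(e,\infty)$, with minimum
\[
f(e) = e\log e - 2e + 3 = 3 - e > 0,
\]
using $e < 3$. Since also $f(0) = 3$ under the standard convention $0\log 0 = 0$, and $f(x) \to \infty$ as $x \to \infty$, we conclude $f \ge 3 - e > 0$ on all of $[0,\infty)$, which yields $x \le \ell(x) + 2$.

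I do not anticipate any real obstacle—the lemma is standard and purely computational. The only minor subtlety is in part (a), where the supremum defining $\gamma(K)$ is only finite once $K$ lies above a threshold (say $K > 1$) at which $\ell$ is uniformly bounded below by a positive constant on $[K,\infty)$; this is harmless since the lemma is used only in the regime $K \to \infty$.
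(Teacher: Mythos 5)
The paper does not supply a proof of this lemma (it is explicitly stated that ``Proof is omitted''), so there is no argument to compare against; your proposal is a complete, correct elementary proof. One remark worth making explicit: the subtlety you flag is in fact a small imprecision in the lemma's statement, not just in your construction. If $K\leq 1$, then taking $x=1\geq K$ gives $\ell(1)=0$, so no finite $\gamma(K)$ can satisfy $1\leq\gamma(K)\ell(1)=0$; thus the assertion ``for each $K>0$'' cannot be taken literally and must be read (as you do) as holding for $K$ above a threshold such as $K>1$, which is harmless since only the asymptotic regime $K\to\infty$ is used. Your explicit estimate $\gamma(K)\leq 2/\log K$ for $K\geq e^2$ and the computation $f(e)=3-e>0$ in part (b) are both correct.
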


The following is the main tightness result of this section.

\begin{Lemma}\label{lem:tightness}
	Suppose that $\{\varphibd^n\}$ is a sequence in $\bar\controlset_b$ such that for some $M_0 \in (0,\infty)$
	\begin{equation}
		 \sup_{n\in\Nmb}\sum_{i=0}^\infty\int_{\XT} \ell(\varphi^n_i(s,y))\,ds\,dy\leq M_0  \mbox{ a.s. }\label{eqn:controlBound}
	\end{equation}
	Denote by $(\bar\Xbd^n, \bar\Ybd^n, \bar\etabd^n)$ the controlled processes associated with $\varphibd^n$, given by \eqref{eq:controlled-process}.
	Then, regarding $\varphibd^n$ as an $S_{M_0}$-valued random variable, the sequence
	$\{(\bar\Xbd^n, \bar\Ybd^n, \bar\etabd^n, \varphibd^n)\}_{n\in\Nmb}$ is tight in $\Dmb([0,T]:\bell_1^\downarrow \times \bell_1 \times \Rmb^\infty) \times S_{M_0}$. 
	Furthermore the collection $\{(\bar\Xbd^n, \bar\Ybd^n, \bar\etabd^n)\}_{n\in\Nmb}$ is $\Cmb$-tight. 
\end{Lemma}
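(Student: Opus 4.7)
Tightness of $\{\varphibd^n\}$ in $S_{M_0}$ is immediate from \eqref{eqn:controlBound} and the compactness of $S_{M_0}$ noted in the preceding discussion. For the processes, the plan is in two stages: first establish coordinate-wise $\Cmb$-tightness of $\bar\Xbd^n,\bar\Ybd^n,\bar\etabd^n$ in $\Dmb([0,T]:\Rmb)$ (which already yields tightness in $\Dmb([0,T]:\Rmb^\infty)$ with the product topology), then upgrade $\bar\Xbd^n$ to $\Dmb([0,T]:\bell_1^\downarrow)$ and $\bar\Ybd^n$ to $\Dmb([0,T]:\bell_1)$ via a uniform tail bound. For the first stage I use an Aldous-type criterion: each jump of $\bar X_i^n,\bar Y_i^n,\bar\eta_i^n$ has size $1/n$, and by Lemma \ref{lem:ellProp}(a) the conditional mean number of jumps over $[\tau,\tau+\delta]$ in \eqref{eq:controlled-process} is controlled by
\[
\Emb\Bigl[\int_\tau^{\tau+\delta}\!\!\int_0^1 \varphi_i^n(s,y)\,dy\,ds \,\Big|\,\Fmc_\tau\Bigr] \le K\delta + \gamma(K) M_0
\]
for any $K>0$, so choosing $K$ large and then $\delta$ small yields the desired modulus of continuity, and the vanishing jump size gives $\Cmb$-tightness automatically.

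The heart of the argument is the uniform tail bound for $\bar\Xbd^n$. For $N \ge 2$ put $T_N(t) := \sum_{i \ge N}\bar X_i^n(t)$. A direct tally of jump contributions in \eqref{eq:controlled-process} shows that $T_N$ only increases at arrival epochs whose dispatched queue has length at least $N-1$, each such increase being $1/n$, and the scaled counting process of these arrivals is exactly $\bar\eta_{N-1}^{n}$; departures can only decrease $T_N$, so
\[
T_N(t) \le T_N(0) + \bar\eta_{N-1}^{n}(t).
\]
Using $\beta_n(x) \le x$ on $[0,1]$, the monotonicity bound $\bar X_{N-1}^n \le \|\bar\Xbd^n\|_1/(N-1)$ (from $\bar\Xbd^n \in \bell_1^\downarrow$), and $\Emb\sup_t\|\bar\Xbd^n(t)\|_1 \le \|\xbd^n\|_1 + \lambda_n(M_0+2T)=:C_1$ (which follows by bounding the arrival integral via Lemma \ref{lem:ellProp}(b)), I obtain for any $K>0$
\[
\Emb\,\bar\eta_{N-1}^{n}(T) \le \lambda_n\Bigl[\frac{K T C_1}{N-1}+\gamma(K)M_0\Bigr].
\]
Choosing $K$ and then $N$ large makes this arbitrarily small; combined with $T_N(0)=\sum_{i\ge N}x_i^n\to 0$ uniformly in $n$ (from the $\bell_1$-convergence of $\xbd^n$) and Markov's inequality, this yields $\lim_{N\to\infty}\limsup_n\Pmb(\sup_t T_N(t)>\varepsilon)=0$ for every $\varepsilon>0$.

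To transfer this control to $\bar\Ybd^n$, the identity $\bar X_i^n = \bar Y_i^n + \bar\eta_{i-1}^n - \bar\eta_i^n$ in \eqref{eq:controlled-process}, combined with the pointwise monotonicity $\bar\eta_{i-1}^n \ge \bar\eta_i^n$ for $i\ge 2$ (a consequence of $\beta_n$ being nondecreasing and $\bar X_i^n$ being nonincreasing in $i$), yields $|\bar Y_i^n(t)| \le \bar X_i^n(t) + (\bar\eta_{i-1}^n(t)-\bar\eta_i^n(t))$; summing and telescoping (noting $\bar\eta_i^n(t)\to 0$ as $i\to\infty$ since $\bar X_i^n(t)=0$ for $i$ past the maximum queue length) gives $\sum_{i\ge N}|\bar Y_i^n(t)| \le T_N(t) + \bar\eta_{N-1}^{n}(t)$, so the previous estimate applies verbatim. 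Tightness of $\bar\etabd^n$ in $\Rmb^\infty$ reduces to the already-established coordinate-wise tightness. Combining coordinate-wise $\Cmb$-tightness with the uniform tail bounds produces the claimed tightness in $\Dmb([0,T]:\bell_1^\downarrow\times\bell_1\times\Rmb^\infty)$, and the $\Cmb$-tightness is inherited from each coordinate together with the uniform smallness of the tails. The main technical obstacle is the $\bar\Xbd^n$ tail bound: it depends on the specific structure of the reflection terms $\bar\eta^n$ to collapse an infinite sum into a single scalar, and on the $\bell_1^\downarrow$ monotonicity to control $\bar X_{N-1}^n$ without invoking any control on individual queue-length proportions.
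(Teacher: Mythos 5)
Your proposal is correct and arrives at the same essential estimates as the paper, but it organizes the argument a bit differently. The paper verifies Aldous' criterion directly in $\bell_1^\downarrow\times\bell_1\times\Rmb^\infty$: it first proves, for each fixed $t$, a tail bound of the form $\lim_{k\to\infty}\limsup_n\Emb\sum_{i\ge k}[|\Xbar^n_i(t)|+|\Ybar^n_i(t)|]=0$ using the inequality $|\Xbar^n_i|+|\Ybar^n_i|\le 2|\Ybar^n_i|+|\etabar^n_{i-1}-\etabar^n_i|$, Lemma \ref{lem:ellProp}, the monotonicity $\Xbar^n_k\le\|\bar\Xbd^n\|_1/k$, and $\beta_n(x)\le x$; and then bounds the $\|\cdot\|_1$-fluctuation $\Emb\|\bar\Xbd^n(\tau+\delta)-\bar\Xbd^n(\tau)\|_1$ between stopping times. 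You instead establish coordinate-wise $\Cmb$-tightness first and then control the $\bell_1$ tails uniformly in time via the pathwise inequality $T_N(t)\le T_N(0)+\etabar^n_{N-1}(t)$ (which is a nice observation: summing $\Xbar^n_i=\Ybar^n_i+\etabar^n_{i-1}-\etabar^n_i$ over $i\ge N$ telescopes, the $\Ybar^n_i$'s can only decrease for $i\ge 2$, and $\etabar^n_{N-1}$ is nondecreasing, so the tail sum is bounded at all times by its initial value plus a single regulator term). The subsequent estimate of $\Emb\etabar^n_{N-1}(T)$ uses exactly the same ingredients as the paper ($\beta_n(x)\le x$, Lemma \ref{lem:ellProp}(a), the monotonicity of the occupancy, and the $L^1$ bound on $\|\bar\Xbd^n\|_1$). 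The transfer to $\bar\Ybd^n$ via $|\Ybar^n_i|\le\Xbar^n_i+(\etabar^n_{i-1}-\etabar^n_i)$ and telescoping is also correct — the needed monotonicity $\etabar^n_{i-1}\ge\etabar^n_i$ follows since $\beta_n$ is nondecreasing and $i\mapsto\Xbar^n_i$ is nonincreasing. Your decomposition (coordinate-wise Aldous plus uniform tails, combined by truncation) is valid given the $\Cmb$-tightness and the uniformity in $t$ of the tail smallness; it buys a slightly cleaner tail argument at the cost of having to assemble the pieces at the end, whereas the paper works entirely within the $\bell_1$ Aldous framework.
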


\begin{proof}
	Since $S_{M_0}$ is compact the tightness of $\{\varphibd^n\}_{n \in \Nmb}$ is immediate.
	Recall $d_\infty$ defined in \eqref{eq:d-infty}.
	Noting that jump sizes of $\bar\Xbd^n$, $\bar\Ybd^n$, and $\bar\etabd^n$ (with respect to $\|\cdot\|_1$, $\|\cdot\|_1$, and $d_\infty$ respectively) are bounded by $1/n$, $\Cmb$-tightness follows once we have tightness of $\{(\bar\Xbd^n, \bar\Ybd^n, \bar\etabd^n)\}_{n\in\Nmb}$.
    By appealing to Aldous' tightness criteria (cf.\ \cite[Theorem 2.2.2]{joffe1986weak}), it suffices to show that
    \begin{equation}
        \mbox{for each } t \in [0,T], \mbox{ the sequence } \{(\bar\Xbd^n(t),\bar\Ybd^n(t),\bar\etabd^n(t))\}_{n\in\Nmb} \mbox{ is tight in } \bell_1^\downarrow \times \bell_1 \times \Rmb^\infty, \label{eq:tight-marginal} 
    \end{equation}
    and
    \begin{equation}
        \lim_{\delta\to0} \limsup_{n\to\infty} \sup_{\tau\in\Tmc^\delta} \Emb[\|\bar{\Xbd}^n(\tau+\delta)-\bar{\Xbd}^n(\tau)\|_1 + \|\bar{\Ybd}^n(\tau+\delta)-\bar{\Ybd}^n(\tau)\|_1 + d_\infty(\bar\etabd^n(\tau+\delta),\bar\etabd^n(\tau))]=0, \label{eq:tight-fluctuation}
    \end{equation}
    where $\Tmc^\delta$ is the set of all $[0,T-\delta]$-valued stopping times.

    We first prove \eqref{eq:tight-marginal}.
    Fix $t \in [0,T]$.
    For this, it suffices to show that $\{(\Xbar_i^n(t),\Ybar_i^n(t),\etabar_i^n(t))\}_{n\in\Nmb}$ is tight in $\Rmb^3$ for each $i \ge 1$, and that
    \begin{equation}
        \label{eq:tight-tail}
        \lim_{k \to \infty} \limsup_{n \to \infty} \Emb \sum_{i=k}^\infty [|\Xbar_i^n(t)| + |\Ybar_i^n(t)|] = 0.
    \end{equation}
    Now fix $i \ge 1$. 
    From \eqref{eq:controlled-process} we have
	\begin{align*}
		\Emb[|\Xbar_i^n(t)| + |\Ybar_i^n(t)| + |\etabar_i^n(t)|] 
        & \le \Emb[2|\Ybar_i^n(t)| + |\etabar_i^n(t)| + |\etabar_{i-1}^n(t) - \etabar_i^n(t)|] \\
		& \leq 2x_i^n + \Emb\int_{\Xt}[2\varphi^n_{i}(s,y)+4\lambda_n\varphi^n_{0}(s,y)]\,ds\,dy \\
		& \leq 2 + \Emb\int_{\Xt}[2(\ell(\varphi^n_{i}(s,y))+2)+4\lambda_n(\ell(\varphi^n_{0}(s,y))+2)]\,ds\,dy \\
        & \le 2 + (2+4\lambda_n)M_0 + 2(2+4\lambda_n)T,
	\end{align*}
	where the third inequality uses Lemma \ref{lem:ellProp}(b) and the last inequality uses \eqref{eqn:controlBound}. 
    Since $\sup_n \lambda_n < \infty$, we have tightness of $\{(\Xbar_i^n(t),\Ybar_i^n(t),\etabar_i^n(t))\}_{n\in\Nmb}$ in $\Rmb^3$. 
    Again from \eqref{eq:controlled-process} we have that for $k \ge 2$,
	\begin{align}
		\Emb \sum_{i=k}^\infty [|\Xbar_i^n(t)| + |\Ybar_i^n(t)|]
        & \le \Emb \sum_{i=k}^\infty [2|\Ybar_i^n(t)| + |\etabar_{i-1}^n(t) - \etabar_i^n(t)|] \notag \\
		& \leq \sum_{i=k}^\infty 2x_i^n + \Emb \sum_{i=k}^\infty \int_{\Xt} 2\varphi^n_{i}(s,y) \one_{[0,\Xbar^n_i(s)-\Xbar^n_{i+1}(s))}(y)\,ds\,dy \notag \\
        & \qquad + \Emb \int_{\Xt} \lambda_n\varphi^n_{0}(s,y) \one_{[0, \beta_n(\Xbar^{n}_{k-1}(s)))}(y) \,ds\,dy. \label{eq:tight-tail-0}
	\end{align}
    For the first term, from Assumption \ref{assump:1} we have
    \begin{equation}
        \label{eq:tight-tail-1}
        \lim_{k \to \infty} \limsup_{n \to \infty} \sum_{i=k}^\infty x_i^n \le \lim_{k \to \infty} \sum_{i=k}^\infty x_i =0.
    \end{equation}
    For the second term in \eqref{eq:tight-tail-0}, first note that by non-negativity of $\bar\Xbd^n(t)$, \eqref{eq:controlled-process}, Lemma \ref{lem:ellProp}(b) and \eqref{eqn:controlBound},
    \begin{equation*}
        \Emb \|\bar\Xbd^n(t)\|_1 = \Emb \sum_{i=1}^{\infty} \bar Y^n_i(t)\le \|\xbd^n\|_1 + \Emb \int_{\Xt} \lambda_n \varphi^n_0(s,y)\,dsdy \le \|\xbd^n\|_1 + \lambda_n(M_0+2T),
    \end{equation*}
    and hence
    \begin{equation}
        \label{eq:tight-X-bound}
        \sup_n \sup_{t \in [0,T]} \Emb \|\bar\Xbd^n(t)\|_1 < \infty.
    \end{equation}
    Therefore, for any $K > 0$, we have
    \begin{align*}
        & \lim_{k \to \infty} \limsup_{n \to \infty} \Emb \sum_{i=k}^\infty \int_{\Xt} \varphi^n_{i}(s,y) \one_{[0,\Xbar^n_i(s)-\Xbar^n_{i+1}(s))}(y) \,ds\,dy \\
        & \le \lim_{k \to \infty} \limsup_{n \to \infty} \Emb \sum_{i=k}^\infty \int_{\Xt} [K + \gamma(K) \ell(\varphi^n_{i}(s,y))] \one_{[0,\Xbar^n_i(s)-\Xbar^n_{i+1}(s))}(y) \,ds\,dy \\
        & \le \lim_{k \to \infty} \limsup_{n \to \infty} K \Emb \int_0^t \Xbar^n_k(s)\,ds + \gamma(K) M_0 \\
        & \le \lim_{k \to \infty} \limsup_{n \to \infty} K \Emb \int_0^t \frac{\|\bar\Xbd^n(s)\|_1}{k} \,ds + \gamma(K) M_0 \\
        & = \gamma(K) M_0 
    \end{align*}
    which converges to $0$ as $K \to \infty$.
    Here the second line uses Lemma \ref{lem:ellProp}(a), the third uses \eqref{eqn:controlBound}, the fourth uses the monotonicity of $j \mapsto \bar X^n_j(s)$, and the last line uses \eqref{eq:tight-X-bound}.
    Similarly, for the third term in \eqref{eq:tight-tail-0} and any $K>0$,
    \begin{align*}
        & \lim_{k \to \infty} \limsup_{n \to \infty} \Emb \int_{\Xt} \lambda_n\varphi^n_{0}(s,y) \one_{[0, \beta_n(\Xbar^{n}_{k-1}(s)))}(y) \,ds\,dy \\
        & \le \lim_{k \to \infty} \limsup_{n \to \infty} \Emb \int_{\Xt} \lambda_n[K+\gamma(K)\ell(\varphi^n_{0}(s,y))] \one_{[0, \beta_n(\Xbar^{n}_{k-1}(s)))}(y) \,ds\,dy \\
        & \le \lim_{k \to \infty} \limsup_{n \to \infty} \lambda_n K \Emb \int_0^t \Xbar^n_{k-1}(s)\,ds + \lambda \gamma(K) M_0 \\
        & = \lambda \gamma(K) M_0 
    \end{align*}
    which converges to $0$ as $K \to \infty$.
    Here the third line follows since $\beta_n(x) \le x^{d_n} \le x$ for $x \in [0,1]$. 
    Combining above two estimates with \eqref{eq:tight-tail-0} and \eqref{eq:tight-tail-1}, we get \eqref{eq:tight-tail}.
    This gives \eqref{eq:tight-marginal}. 

    Finally we prove \eqref{eq:tight-fluctuation}.    
	Fix $\delta \in (0,1)$, $\tau \in \Tmc^\delta$, and $K>0$. 
	From \eqref{eq:controlled-process} we have
	\begin{align*}
		&\Emb[\|\bar\Xbd^n(\tau+\delta)-\bar\Xbd^n(\tau)\|_1 + \|\bar\Ybd^n(\tau+\delta)-\bar\Ybd^n(\tau)\|_1 + d_\infty(\bar\etabd^n(\tau+\delta),\bar\etabd^n(\tau))] \\
		& \le 2\Emb \|\bar\Ybd^n(\tau+\delta)-\bar\Ybd^n(\tau)\|_1 + \Emb \sum_{i=1}^\infty |[\etabar_{i-1}^n(\tau+\delta)-\etabar_{i}^n(\tau+\delta)] - [\etabar_{i-1}^n(\tau)-\etabar_{i}^n(\tau)]| \\
		& \qquad + \Emb \sum_{i=1}^\infty \frac{|\etabar_{i}^n(\tau+\delta) -\etabar_{i}^n(\tau)|}{2^i}\\
		& \le 2\Emb \sum_{i=1}^\infty \int_{[\tau,\tau+\delta]\times[0,1]} \varphi^n_{i}(s,y) \one_{[0,\Xbar^n_i(s)-\Xbar^n_{i+1}(s))}(y) \,ds\,dy + 4\lambda_n \Emb \int_{[\tau,\tau+\delta]\times[0,1]} \varphi^n_{0}(s,y)\,ds\,dy
		\\
		& \le 2\Emb \sum_{i=1}^\infty \int_{[\tau,\tau+\delta]\times[0,1]} [K+\gamma(K)\ell(\varphi^n_{i}(s,y))] \one_{[0,\Xbar^n_i(s)-\Xbar^n_{i+1}(s))}(y) \,ds\,dy \\
		& \qquad + 4\lambda_n \Emb \int_{[\tau,\tau+\delta]\times[0,1]} [K+\gamma(K)\ell(\varphi^n_{0}(s,y))]\,ds\,dy \\
		& \le (2+4\lambda_n)\delta K + (2+4\lambda_n)\gamma(K)M_0,
	\end{align*}
	where the third inequality uses Lemma \ref{lem:ellProp}(a) and the last inequality uses \eqref{eqn:controlBound}.
	Therefore
	\begin{align*}
		\limsup_{\delta\to0} \limsup_{n\to\infty} \sup_{\tau\in\Tmc^\delta} & \Emb[\|\bar\Xbd^n(\tau+\delta)-\bar\Xbd^n(\tau)\|_1 + \|\bar\Ybd^n(\tau+\delta)-\bar\Ybd^n(\tau)\|_1 \\
        & \qquad \qquad
        + d_\infty(\bar\etabd^n(\tau+\delta),\bar\etabd^n(\tau))] \leq (2+4\lambda_n)\gamma(K)M_0,
	\end{align*}
	which goes to $0$ as $K \to \infty$.
	This gives \eqref{eq:tight-fluctuation} and completes the proof.
\end{proof}

\subsection{Characterization of Limit Points}
\label{sec:weakconv}

Suppose that $\{\varphibd^n\}_{n\in\Nmb}$ is a sequence as in Lemma \ref{lem:tightness}. 
Then from the lemma we have the tightness of $\{(\bar\Xbd^n, \bar\Ybd^n, \bar\etabd^n, \varphibd^n)\}_{n\in\Nmb}$. 
In this section we characterize the limit points of this sequence.
It will be convenient to consider the following compensated point processes
\begin{equation*}
	\tilde{D}^{n \vartheta_i^n\varphi_i^n}_i(ds\,dy)
	:= D^{n \vartheta_i^n\varphi_i^n}_i(ds\,dy) - n\vartheta_i^n\varphi_i^n(s,y)\,ds\,dy, \quad n \in \Nmb, \quad i \ge 0.
\end{equation*}
Define compensated processes $\widetilde\Bbd^n$ and $\widetilde\etabd^n$ as
\begin{align}
	\Btil^n_1(t) &:= \frac{1}{n} \int_{\Xt} \Dtil_{0}^{n\lambda_n\varphi_0^n}(ds\,dy) - \frac{1}{n} \int_{\Xt} \one_{[0,\Xbar^{n}_1(s-)-\Xbar^{n}_2(s-))}(y) \Dtil_1^{n\varphi_1^n}(ds\,dy), \label{eq:Btil-1} \\
	\Btil^n_i(t) &:= \frac{1}{n} \int_{\Xt} \one_{[0,\Xbar^{n}_i(s-)-\Xbar^{n}_{i+1}(s-))}(y) \Dtil_i^{n\varphi_i^n}(ds\,dy), \qquad i\geq 2, \label{eq:Btil-i} \\
	\etatil_i^n(t) &:= \frac{1}{n} \int_{\Xt} \one_{[0, \beta_n(\Xbar^{n}_i(s-)))}(y) \Dtil_{0}^{n\lambda_n\varphi_0^n}(ds\,dy), \qquad i \ge 1. \label{eq:etatil}
\end{align}
These allow us to write
\begin{align}
	\Ybar^n_1(t) &= x_1^n + \Btil^n_1(t) + \lambda_n \int_{\Xt} \varphi_0^n(ds\,dy) - \int_{\Xt} \one_{[0,\bar{X}^n_1(s)-\bar{X}^n_2(s))}(y) \varphi^n_1(s,y)\,ds\,dy, \label{eq:Ybar-decom-1} \\
	\Ybar^n_i(t) &= x_i^n - \Btil^n_i(t) - \int_{\Xt} \one_{[0,\bar{X}^n_i(s)-\bar{X}^n_{i+1}(s))}(y) \varphi^n_i(s,y)\,ds\,dy, \qquad i \ge 2. \label{eq:Ybar-decom-i}
\end{align}
The following lemma characterizes the limit points of $\{(\bar\Xbd^n, \bar\Ybd^n, \bar\etabd^n, \varphibd^n)\}_{n\in\Nmb}$.

\begin{Lemma}\label{lem:convergence}
	Suppose that $\{\varphibd^n\}$ is a sequence as in Lemma \ref{lem:tightness}.
	Suppose also that the associated sequence $\{(\bar\Xbd^n, \bar\Ybd^n, \bar\etabd^n, \varphibd^n)\}_{n\in\Nmb}$ converges along a subsequence, in distribution, to $(\bar\Xbd,\bar\Ybd,\bar\etabd,\varphibd)$ given on some probability space $(\Omega^*,\Fmc^*,\Pmb^*)$.
	Then the following holds $\Pmb^*$-a.s.
	\begin{enumerate}[(a)]
	\item
		Equations \eqref{eq:psi1}--\eqref{eq:psii} are satisfied with $(\zetabd, \psibd, \varphibd)$ replaced by $(\bar\Xbd, \bar\Ybd, \varphibd)$.
	\item
		$(\bar\Xbd, \bar\Ybd)\in \Cmc$ and $\varphibd\in\Smc(\bar\Xbd, \bar\Ybd)$.
		In particular, $(\bar\Xbd, \bar\Ybd, \bar\etabd)$ satisfy the following system of equations
		\begin{align}
			\bar{X}_1(t) &= \Ybar_1(t) - \etabar_1(t), \label{eq:XbarLim1} \\
			\bar{X}_i(t) &= \Ybar_i(t) + \etabar_{i-1}(t) - \etabar_{i}(t), \quad i \ge 2, \label{eq:XbarLim2}
		\end{align}
		and for every $i \in \Nmb$,  $\etabar_i(0)=0$, $\etabar_i$ is non-decreasing, and $\int_0^t (1-\Xbar_i(s)) \, \etabar_i(ds) = 0$.
	\end{enumerate}
\end{Lemma}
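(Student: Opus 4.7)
By the Skorokhod representation theorem, I may replace weak convergence by almost sure convergence on a new probability space $(\Omega^*, \Fmc^*, \Pmb^*)$, and by the $\Cmb$-tightness of Lemma \ref{lem:tightness} the limit $(\bar\Xbd, \bar\Ybd, \bar\etabd)$ is continuous, with the convergence holding $\Pmb^*$-a.s.\ uniformly on $[0,T]$ (in the norm topology on $\bell_1$ for $\bar\Xbd^n$ and $\bar\Ybd^n$, coordinate-wise for $\bar\etabd^n$). I work throughout on this space.

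\emph{Part (a).} Start from the decomposition \eqref{eq:Ybar-decom-1}--\eqref{eq:Ybar-decom-i}. For each fixed $i \ge 1$, Doob's inequality combined with \eqref{eqn:controlBound} and Lemma \ref{lem:ellProp} give
\begin{equation*}
\Emb \sup_{t \le T}(\Btil_i^n(t))^2 \le \tfrac{C}{n}\Emb\int_{\XT}[\lambda_n\varphi_0^n(s,y) + \varphi_i^n(s,y)]\,ds\,dy \le \tfrac{C'(M_0+T)}{n} \to 0,
\end{equation*}
so the compensated martingale terms vanish. For the absolutely continuous parts, joint a.s.\ convergence of $\bar\Xbd^n \to \bar\Xbd$ in $\Cmb([0,T]:\bell_1)$ and of $\varphibd^n \to \varphibd$ in $S_{M_0}$ (each coordinate as a finite measure on $\XT$), together with uniform integrability of $\varphi_i^n$ (via $\varphi \le \ell(\varphi)+2$ and \eqref{eqn:controlBound}) and the Lebesgue-a.e.\ pointwise convergence $\one_{[0,\bar X_i^n(s)-\bar X_{i+1}^n(s))}(y) \to \one_{[0,\bar X_i(s)-\bar X_{i+1}(s))}(y)$, yield the identities \eqref{eq:psi1}--\eqref{eq:psii}; that is, $\varphibd \in \Smc(\bar\Xbd, \bar\Ybd)$.

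\emph{Part (b).} Equations \eqref{eq:XbarLim1}--\eqref{eq:XbarLim2} follow by passing to the a.s.\ limit in \eqref{eq:controlled-process}. The constraints $\bar\eta_i(0) = 0$, non-decreasingness of $\bar\eta_i$, $\bar X_i \in [0,1]$ and $\bar X_i \ge \bar X_{i+1}$ are preserved under uniform limits; absolute continuity of $\bar X_i$ and $\bar Y_i$ follows from the representation in part (a), so $(\bar\Xbd, \bar\Ybd) \in \Cmb([0,T]:\bell_1^\downarrow \times \bell_1)$ with $\bar\Xbd(0) = \bar\Ybd(0) = \xbd$. The substantive step, and the \emph{main obstacle}, is the complementary slackness $\int_0^T (1-\bar X_i(s))\,\bar\eta_i(ds) = 0$: in the JSQ$(d_n)$ setting (unlike JSQ$(n)$, cf.\ Remark \ref{rem:2.2}) this relation fails at the prelimit level and must be recovered asymptotically. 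Using the elementary bound $\beta_n(x) \le x^{d_n}$ and $\sup_{x \in [0,1]}(1-x)x^{d_n} = \tfrac{d_n^{d_n}}{(d_n+1)^{d_n+1}} \to 0$, together with \eqref{eq:etatil} and \eqref{eqn:controlBound},
\begin{align*}
\Emb \int_0^T (1-\bar X_i^n(s-))\,\bar\eta_i^n(ds)
&= \Emb \int_{\XT}(1-\bar X_i^n(s-))\one_{[0,\beta_n(\bar X_i^n(s-)))}(y)\,\lambda_n\varphi_0^n(s,y)\,ds\,dy \\
&\le \tfrac{d_n^{d_n}}{(d_n+1)^{d_n+1}}\,\lambda_n(M_0+2T) \to 0.
\end{align*}
Since $\bar\eta_i^n \to \bar\eta_i$ uniformly with continuous limit, $\bar\eta_i^n(ds) \to \bar\eta_i(ds)$ weakly as finite measures on $[0,T]$, and combined with the uniform convergence $\bar X_i^n \to \bar X_i$ this yields $\Pmb^*$-a.s.\ convergence of the left side to $\int_0^T (1-\bar X_i(s))\,\bar\eta_i(ds)$; Fatou's lemma plus nonnegativity of the integrand then give complementary slackness $\Pmb^*$-a.s. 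The crux is that the hypothesis $d_n \to \infty$ enters precisely through the uniform decay of $(1-x)\beta_n(x)$, which renders the non-Skorokhod contributions to $\bar\eta^n$ asymptotically negligible and is the mechanism by which the LDP becomes insensitive to the rate of growth of $d_n$.
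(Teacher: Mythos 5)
Your overall strategy mirrors the paper's: part (a) via the compensated-martingale decomposition plus uniform integrability of the controls, and part (b) by passing to the limit in the Skorokhod-problem conditions, with the complementary slackness being the only non-routine step. For the complementary slackness your idea of exploiting the uniform decay $\sup_{x\in[0,1]}(1-x)\beta_n(x)\le\sup_{x\in[0,1]}(1-x)x^{d_n}=d_n^{d_n}/(d_n+1)^{d_n+1}\to 0$ is a nice alternative to the paper's dominated-convergence route, but there is a genuine gap in how you execute it.

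The displayed estimate
\begin{equation*}
\Emb\int_{\XT}(1-\bar X_i^n(s-))\,\one_{[0,\beta_n(\bar X_i^n(s-)))}(y)\,\lambda_n\varphi_0^n(s,y)\,ds\,dy
\ \le\ \tfrac{d_n^{d_n}}{(d_n+1)^{d_n+1}}\,\lambda_n(M_0+2T)
\end{equation*}
does not follow from what you have. What is true is that $\int_0^1 (1-x)\,\one_{[0,\beta_n(x))}(y)\,dy=(1-x)\beta_n(x)\le d_n^{d_n}/(d_n+1)^{d_n+1}$, because integrating the indicator over $y$ against Lebesgue measure produces the factor $\beta_n(x)$. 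But in your integral the $y$-integration is weighted by the control $\varphi_0^n(s,y)$, which need not be bounded uniformly in $n$ and can concentrate its mass precisely on the small set $[0,\beta_n(x))$; the pointwise quantity $(1-x)\,\one_{[0,\beta_n(x))}(y)$ itself is not small (it equals $1-x$, which can be close to $1$ even when $\beta_n(x)>0$). So you cannot pull out the factor $d_n^{d_n}/(d_n+1)^{d_n+1}$ and bound the remaining $\int\varphi_0^n$ by $M_0+2T$.

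The fix is the same superlinearity splitting the paper uses throughout: write $\varphi_0^n\le K+\gamma(K)\ell(\varphi_0^n)$ (Lemma~\ref{lem:ellProp}(a)). The $\gamma(K)\ell$ piece contributes at most $\lambda_n\gamma(K)M_0$ by \eqref{eqn:controlBound}, independent of $n$. The constant-$K$ piece is where your uniform decay now applies legitimately: $K\lambda_n\int_{\XT}(1-\bar X_i^n(s-))\one_{[0,\beta_n(\bar X_i^n(s-)))}(y)\,ds\,dy\le K\lambda_n T\,d_n^{d_n}/(d_n+1)^{d_n+1}\to 0$ for each fixed $K$. Sending $n\to\infty$ and then $K\to\infty$ yields the conclusion. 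This is a small but essential step; without it the argument fails because $\varphi_0^n$ is not uniformly bounded. (The paper instead evaluates the $K$-term by dominated convergence using $\beta_n(\bar X_i^n(s))\to 0$ wherever $\bar X_i(s)<1$; your uniform bound is a clean replacement for that step once the splitting is in place.)

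The remaining steps you sketch (Skorokhod representation, weak convergence of $\bar\eta_i^n(ds)$ against the continuous limit, Fatou) are sound, modulo the usual care that jump corrections $s$ vs.\ $s-$ are $O(1/n)$ and vanish.
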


\begin{proof}
	Assume without loss of generality that convergence occurs along the whole sequence.
	Recall the notations in \eqref{eq:Btil-1}--\eqref{eq:Ybar-decom-i}.
	It follows from Doob's inequality and Lemma \ref{lem:ellProp}(b) that for each $i \ge 1$,
	\begin{align}
		& \Emb \left( \sup_{0\leq t\leq T}|\ti B^n_i(t)|^2 + \sup_{0\leq t\leq T}|\etatil^n_i(t)|^2 \right) \notag \\
		& \leq \frac{1}{n}\Emb\int_{\XT}[12\lambda_n\varphi^n_{0}(s,y)+ 8\varphi^n_{i}(s,y)]\,ds\,dy \nonumber\\
		& \leq \frac{1}{n}\Emb\int_{\XT}[12\lambda_n (\ell(\varphi^n_{0}(s,y))+2)+ 8(\ell(\varphi^n_{i}(s,y))+2)]\,ds\,dy \nonumber\\
		& \leq \frac{1}{n}(12\lambda_n+8)(M_0+2T) \to 0 \label{eq:cvg-0}
	\end{align}
	as $n \to \infty$.
	By appealing to the Skorokhod representation theorem {(cf.\ \cite[Theorem 6.7]{BillingsleyConv})}, we can assume without loss of generality that $(\bar\Xbd^n, \bar\Ybd^n, \bar\etabd^n, \varphibd^n, \widetilde\Bbd^n, \widetilde\etabd^n) \to (\bar\Xbd, \bar\Ybd, \bar\etabd, \varphibd, \zero, \zero)$ in $\Dmb([0,T]:\bell_1^\downarrow \times \bell_1 \times \Rmb^\infty) \times S_{M_0} \times (\Dmb([0,T]:\Rmb))^\infty \times (\Dmb([0,T]:\Rmb))^\infty$ a.s.\ on $(\Omega^*,\Fmc^*,\Pmb^*)$, and thus the rest of the argument will be made a.s.\ on $(\Omega^*,\Fmc^*,\Pmb^*)$.
	From the $\Cmb$-tightness proved in Lemma \ref{lem:tightness}, $(\bar\Xbd,\bar\Ybd, \bar\etabd)$ takes values in $\Cmb([0,T]:\bell_1^\downarrow \times \bell_1 \times \Rmb^\infty)$.
	
	We first prove part (a).
	Using the triangle inequality, for each $i \ge 1$,
	\begin{align}
		&\left|\int_{\Xt}\one_{[0,\bar{X}^n_i(s)-\bar{X}^n_{i+1}(s))}(y)\varphi^n_i(s,y)\,ds\,dy-\int_{\Xt}\one_{[0,\bar{X}_i(s)-\bar{X}_{i+1}(s))}(y)\varphi_i(s,y)\,ds\,dy\right| \notag \\
		& \leq \int_{\Xt}|\one_{[0,\bar{X}^n_i(s)-\bar{X}^n_{i+1}(s))}(y)-\one_{[0,\bar{X}_i(s)-\bar{X}_{i+1}(s))}(y)|\varphi^n_i(s,y)\,ds\,dy \notag \\
		&\qquad + \left|\int_{\Xt}\one_{[0,\bar{X}_i(s)-\bar{X}_{i+1}(s))}(y) (\varphi^n_i(s,y)-\varphi_i(s,y))\,ds\,dy\right|.
		\label{eq:cvg-1}
	\end{align}
	Since $\leb_t\{(s,y) : y=\bar{X}_i(s)-\bar{X}_{i+1}(s) \}=0$, where $\leb_t$ is the Lebesgue measure on $[0,t]\times[0,1]$, we have
	\begin{equation*}
		|\one_{[0,\bar{X}^n_i(s)-\bar{X}^n_{i+1}(s))}(y)-\one_{[0,\bar{X}_i(s)-\bar{X}_{i+1}(s))}(y)|\to 0
	\end{equation*}
	as $n \to \infty$ for $\leb_t$-a.e.\ $(s,y)\in[0,t]\times[0,1]$.
	From \eqref{eqn:controlBound} and the super-linearity of $\ell$, one has the uniform integrability of
		$(s,y)\mapsto\varphi^n_i(s,y)$
	with respect to the normalized Lebesgue measure on $[0,T]\times[0,1]$.
	The above two observations imply that, as $n\to \infty$,
	\begin{equation}
		\label{eq:cvg-2}
		\int_{\Xt}|\one_{[0,\bar{X}^n_i(s)-\bar{X}^n_{i+1}(s))}(y)-\one_{[0,\bar{X}_i(s)-\bar{X}_{i+1}(s))}(y)|\varphi^n_i(s,y)\,ds\,dy\to 0.
	\end{equation}
	Recalling the topology on $S_{M_0}$, the convergence $\varphibd^n \to \varphibd$ and $\lambda_n\to \lambda$ implies that
	\begin{align}
		& \left|\int_{\Xt}\one_{[0,\bar{X}_i(s)-\bar{X}_{i+1}(s))}(y) (\varphi^n_i(s,y)-\varphi_i(s,y))\,ds\,dy\right|
		\to 0, \label{eq:cvg-3} \\
		& \lambda_n\int_{\Xt}\varphi^n_{0}(s,y)\,ds\,dy \to \lambda\int_{\Xt}\varphi_{0}(s,y)\,ds\,dy \label{eq:cvg-4}
	\end{align}
    as $n \to \infty$.
	Combining \eqref{eq:Ybar-decom-1}, \eqref{eq:Ybar-decom-i} with \eqref{eq:cvg-0}--\eqref{eq:cvg-4} completes the proof of part (a).

	We now prove part (b).
	The fact that $\varphibd \in \Smc(\bar\Xbd, \bar\Ybd)$ will be immediate from part (a) once we have $(\bar\Xbd, \bar\Ybd)\in\Cmc$.
	Since $(\bar\Xbd, \bar\Ybd)\in\Cmb([0,T]:\bell_1^\downarrow \times \bell_1)$, in order to show $(\bar\Xbd, \bar\Ybd)\in\Cmc$, it suffices to verify properties (i) and (ii) in the definition of $\Cmc$.
	
	\textit{Verification of property (ii)}:
	The validity of \eqref{eq:XbarLim1}--\eqref{eq:XbarLim2} is immediate from the fact that these equalities hold with
	$(\bar\Xbd, \bar\Ybd, \bar\etabd)$ replaced with
	$(\bar\Xbd^n, \bar\Ybd^n, \bar\etabd^n)$.
	Fix $i\in \Nmb$.
	Clearly $\etabar_i(0)=0$ and $\etabar_i(\cdot)$ is nondecreasing since $\etabar_i^n(0)=0$ and $\etabar_i^n(\cdot)$ is nondecreasing.
	It remains to verify that
	\begin{equation}
		\label{eqn:reflectionproperty}
		\int_0^T\left(1-\bar{X}_i(s)\right)\etabar_i(ds)=0.
	\end{equation}
	Recall the compensated process $\etatil^n_i(t)$ defined in \eqref{eq:etatil} and estimated in \eqref{eq:cvg-0}. 
	Then
	\begin{equation*}
		\lambda_n \int_{\Xt} \one_{[0,\beta_n(\bar{X}^n_i(s)))}(y) \varphi^n_0(s,y)\,ds\,dy = \etabar^n_i(t) - \etatil^n_i(t) \to \etabar_i(t)
	\end{equation*}
	uniformly in $t \in [0,T]$, by $\Cmb$-tightness of $\etabar^n_i$ and the convergence that $(\etabar^n_i,\etatil^n_i) \to (\etabar_i,0)$.
	Using this and the fact that $s \mapsto \Xbar_i(s)$ is bounded and continuous, we have
	\begin{align*}
		\int_0^T\left(1-\bar{X}_i(s)\right)\etabar_i(ds) & = \lim_{n \to \infty} \int_0^T\left(1-\bar{X}_i(s)\right)[\etabar^n_i - \etatil^n_i](ds) \\
		& = \lim_{n \to \infty} \int_{\XT} \left(1-\bar{X}_i(s)\right) \lambda_n \one_{[0,\beta_n(\bar{X}^n_i(s)))}(y) \varphi^n_0(s,y)\,ds\,dy.
	\end{align*}
	For any $K>0$,
	\begin{align*}
		& \limsup_{n \to \infty} \int_{\XT} \left(1-\bar{X}_i(s)\right) \lambda_n \one_{[0,\beta_n(\bar{X}^n_i(s)))}(y) \varphi^n_0(s,y)\,ds\,dy \\
		& \le \limsup_{n \to \infty} \lambda_n \int_{\XT} \left(1-\bar{X}_i(s)\right) \one_{[0,\beta_n(\bar{X}^n_i(s)))}(y) [K+\gamma(K)\ell(\varphi^n_0(s,y))]\,ds\,dy \\
		& \le \limsup_{n \to \infty} \lambda_n K \int_{\XT} \left(1-\bar{X}_i(s)\right) \one_{[0,\beta_n(\bar{X}^n_i(s)))}(y) \,ds\,dy + \limsup_{n \to \infty} \lambda_n \gamma(K) M_0 \\
		& = \lambda K \int_{\XT} \left(1-\bar{X}_i(s)\right) \lim_{n \to \infty}  \one_{[0,\beta_n(\bar{X}^n_i(s)))}(y) \,ds\,dy + \lambda \gamma(K) M_0,
	\end{align*}
	where the second line uses Lemma \ref{lem:ellProp}(a), the third line uses \eqref{eqn:controlBound}, and the last line uses the dominated convergence theorem.
	Since $\beta_n(x) \le x^{d_n}$ for $x \in [0,1]$, we have $\beta_n(x_n) \to 0$ whenever $\limsup_{n \to \infty} x_n <1$. 
	Since $\Xbar_i^n(s) \to \Xbar_i(s)$ for each $s \in [0,T]$, we must have $$\int_{\XT} \left(1-\bar{X}_i(s)\right) \lim_{n \to \infty}  \one_{[0,\beta_n(\bar{X}^n_i(s)))}(y) \,ds\,dy = 0.$$
	Since $\gamma(K) \to 0$ as $K \to \infty$, we have verified property (ii) in the definition of $\Cmc$.
	
	\textit{Verification of property (i)}:
	From part (a) it is clear that $\Ybar_i(0)=x_i$ and $\Ybar_i$ is absolutely continuous on $[0,T]$ for each $i$.
	From property (ii) and properties of the Skorokhod map $\Gamma_\infty$ in Remark \ref{rmk:Skorokhod-map}, we have that $\Xbar_i(0)=x_i$ and $\Xbar_i$ is absolutely continuous on $[0,T]$ for each $i$.
	This verifies property (i) and completes the proof. 
\end{proof}

\section{Laplace Upper Bound}\label{sec:UpperBound}

This section is devoted to the proof of the Laplace upper bound \eqref{eqn:LaplaceUpperBound}.
Fix $G \in \Cmb_b(\pathspace)$.
From the variational representation in Lemma \ref{lem:varRep}, for all $n\in\Nmb$, we can select a control $\ti\varphibd^n\in\bar{\controlset}_b$ such that
\begin{equation}\label{eqn:upperNopt}
	-\frac{1}{n}\log \Emb e^{-nG(\Xbd^n,\Ybd^n)}\geq \Emb\left\{\sum_{i=0}^\infty\int_{\XT}\vartheta^n_i\ell(\ti\varphi^n_i(s,y))\,ds\,dy + G(\bar{\Xbd}^{n,\ti\varphibd^n},\bar{\Ybd}^{n,\ti\varphibd^n})\right\}-\frac{1}{n}.
\end{equation}
This shows that
\begin{equation*}
	\sup_{n\in\Nmb}\Emb\sum_{i=0}^\infty\int_{\XT}\vartheta^n_i\ell(\ti\varphi^n_i(s,y))\,ds\,dy
	\leq 2\|G\|_\infty+1
	=: M_G.
\end{equation*}
By a standard localization argument (see e.g.\ \cite[Proof of Theorem 4.2]{budhiraja2011variational}) and since $\lambda^n \to \lambda >0$, it now follows that for any fixed $\sigma>0$ there is an $M_0 \in (0,\infty)$
and a sequence $\varphibd^n \in \bar \controlset_b$ taking values in $S_{M_0}$ a.s.\ such that, for all $n$, the expected value on the right side of \eqref{eqn:upperNopt} differs from the same expected value, but with $\tilde \varphibd^n$ replaced by $\varphibd^n$ throughout, by at most $\sigma$. In particular,
\begin{equation}\label{eqn:UpperBound1}
	-\frac{1}{n}\log \Emb e^{-nG(\Xbd^n,\Ybd^n)}\geq \Emb\left\{\sum_{i=0}^\infty\int_{\XT}\vartheta^n_i\ell(\varphi^{n}_i(s,y))\,ds\,dy + G(\bar{\Xbd}^{n,\varphibd^{n}},\bar{\Ybd}^{n,\varphibd^{n}})\right\}-\frac{1}{n}-\sigma.
\end{equation}
Now we can complete the proof of the Laplace upper bound.
Since $\varphibd^n$ are in $S_{M_0}$ a.s., from Lemma  \ref{lem:tightness} we have the tightness of $\{(\bar\Xbd^n,\bar\Ybd^n,\bar\etabd^n, \varphibd^n)\}_{n \in \Nmb}$.
Assume without loss of generality that $\{(\bar\Xbd^n,\bar\Ybd^n,\bar\etabd^n, \varphibd^n)\}_{n \in \Nmb}$ converges along the whole sequence, in distribution, to $(\bar\Xbd,\bar\Ybd,\bar\etabd, \varphibd)$, given on some probability space $(\Omega^*, \Fmc^*, \Pmb^*)$.
By Lemma \ref{lem:convergence} we have $(\bar\Xbd, \bar\Ybd) \in \Cmc$ and  $\varphibd \in \Smc(\bar\Xbd, \bar\Ybd)$ a.s.\ $\Pmb^*$.
Using \eqref{eqn:UpperBound1}, Fatou's lemma, and the definition of $\rate$ in \eqref{eqn:JSQRateFunction}
\begin{align*}
	& \liminf_{n\to\infty} -\frac{1}{n}\log \Emb e^{-nG(\Xbd^n,\Ybd^n)} \\
	& \ge \liminf_{n\to\infty}\Emb\left\{\sum_{i=0}^\infty\int_{\XT}\vartheta_i^n\ell(\varphi^n_i(s,y))\,ds\,dy + G(\bar{\Xbd}^n,\bar{\Ybd}^n)-\frac{1}{n}-\sigma\right\} \\
	& \ge \Emb^*\left\{\sum_{i=0}^\infty\int_{\XT}\vartheta_i\ell(\varphi_i(s,y))\,ds\,dy + G(\bar{\Xbd},\bar{\Ybd})\right\}-\sigma\\
	& \ge \inf_{(\zetabd,\psibd)\in\Cmc} \{\rate(\zetabd,\psibd) + G(\zetabd,\psibd)\}-\sigma,
\end{align*}
where the second inequality is a consequence of  a lower semicontinuity property of $\ell$,  cf. \cite[Lemma A.1]{budhiraja2013large}.
Since $\sigma \in (0,1)$ is arbitrary, this completes the proof of the Laplace upper bound.
\qed

\section{Laplace Lower Bound}\label{sec:LowerBound}

This section is devoted to the proof of the Laplace lower bound \eqref{eqn:LaplaceLowerBound}. 
The following lemma, adapted from \cite[Lemma 5.1]{BudhirajaFriedlanderWu2021many}, is key to the proof of the lower bound \eqref{eqn:LaplaceLowerBound}. It says that,  given a trajectory $(\zetabd^*,\psibd^*) \in \Cmc$, one can select a trajectory $(\zetabd,\psibd)$ which is suitably close to $(\zetabd^*,\psibd^*)$ and a control $\varphibd$ such that $(\zetabd,\psibd)$ is the unique trajectory driven by $\varphibd$.
We note that although \cite[Lemma 5.1(a)]{BudhirajaFriedlanderWu2021many} is stated with respect to the product topology on $\Cmb([0,T]:\Rmb^\infty \times \Rmb^\infty)$, the result actually holds for $\Cmb([0,T]:\bell_1 \times \bell_1)$ with the corresponding norm denoted by 
$$\|(\zetabd,\psibd)\|_{1,\infty} := \sup_{0 \le t \le T} \left( \|\zetabd(t)\|_1 + \|\psibd(t)\|_1 \right), \quad (\zetabd,\psibd) \in \Cmb([0,T]:\bell_1 \times \bell_1),$$ 
as stated in Lemma \ref{lem:uniqueness}(a) below.
More details on this are provided in Appendix \ref{sec:appendix}.

\begin{Lemma}\label{lem:uniqueness}
	Fix $\sigma\in(0,1)$. Given $(\zetabd^*,\psibd^*)\in\Cmc$ with $\rate(\zetabd^*,\psibd^*)<\infty$, there exists $(\zetabd,\psibd)\in\Cmc$ and $\varphibd\in\Smc(\zetabd,\psibd)$ such that
	\begin{enumerate}[(a)]
	\item
		$\|(\zetabd,\psibd)-(\zetabd^*,\psibd^*)\|_{1,\infty} \le \sigma$.
	\item
		$\sum_{i=0}^\infty\int_{\XT} \vartheta_i\ell(\varphi_i(s,y)) \,ds\,dy \le \rate(\zetabd,\psibd) +\sigma \leq \rate(\zetabd^*,\psibd^*)+2\sigma$.
	\item
		If $(\tilde\zetabd,\tilde\psibd)$ is another pair in $\Cmc$ such that $\varphibd\in\Smc(\tilde\zetabd,\tilde\psibd)$, then $(\tilde\zetabd,\tilde\psibd)=(\zetabd,\psibd)$.
	\end{enumerate}
\end{Lemma}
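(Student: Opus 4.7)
The strategy is to import the multi-step approximation construction from \cite[Lemma 5.1]{BudhirajaFriedlanderWu2021many} and verify that each step can be executed so that the accumulated errors are small not merely in the product topology on $\Cmb([0,T]:\Rmb^\infty \times \Rmb^\infty)$ but in the stronger norm $\|\cdot\|_{1,\infty}$. I would begin by selecting a near-optimal $\varphibd^\ast \in \Smc(\zetabd^\ast,\psibd^\ast)$ whose cost is within $\sigma/4$ of $\rate(\zetabd^\ast,\psibd^\ast)$, and then produce $(\varphibd,\zetabd,\psibd)$ by a finite sequence of modifications of $(\varphibd^\ast,\zetabd^\ast,\psibd^\ast)$, each of which installs one regularity property (boundedness of $\varphi_i$ away from $0$ and $\infty$, strict separation $\zeta_i > \zeta_{i+1}$ on the supports appearing in \eqref{eq:psi1}--\eqref{eq:psii}, and finite support in $i$) while perturbing the trajectory by at most a prescribed fraction of $\sigma$ in the $\bell_1$ norm and increasing the cost by at most a prescribed fraction of $\sigma$.

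The key structural observation enabling the $\bell_1$ upgrade is Remark \ref{rem:2.3}: because $\zetabd^\ast \in \Cmb([0,T]:\bell_1^\downarrow)$, there exists a finite $M$ with $\sup_{t\le T}\zeta_M^\ast(t)<1$, so the reflection is effectively $M$-dimensional. In addition, the tails $\sum_{i\ge N}\sup_{t \le T}|\zeta^\ast_i(t)|$ and $\sum_{i\ge N}\sup_{t\le T}|\psi^\ast_i(t)|$ tend to $0$ as $N\to\infty$, since both paths are continuous and $\bell_1$-valued on a compact time interval, and by the finiteness of the cost the tail contribution $\sum_{i\ge N}\int_{\XT}\vartheta_i\ell(\varphi_i^\ast)\,ds\,dy$ also vanishes. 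Thus the first substantive step is a hard truncation: fix $N\ge M$ large enough that replacing $\varphi_i^\ast$ by $1$ for all $i\ge N$ changes the cost by at most $\sigma/8$ and the trajectory by at most $\sigma/8$ in $\|\cdot\|_{1,\infty}$, reducing the construction to the finite-dimensional coordinates $1,\ldots,N$.

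Having reduced to finitely many coordinates, I would then carry out the remaining modifications of \cite[Lemma 5.1]{BudhirajaFriedlanderWu2021many}: mollify the control so that each $\varphi_i$ takes values in $[\delta,K]$ for some $0<\delta<K<\infty$, and perturb $\zetabd$ slightly so that $\zeta_i>\zeta_{i+1}$ holds on the support of each integrand in \eqref{eq:psii} for $i<N$. Because these perturbations act only on the first $N$ coordinates, their effect on $\psibd$ is small in $\bell_1$ by direct inspection; to conclude the same for $\zetabd$, one applies the Lipschitz bound \eqref{eq:Skorokhod-Lipschitz} to the truncated Skorokhod map $\Gamma_N$, which gives $\|\zetabd-\zetabd^\ast\|_1 \le C_N \|\psibd-\psibd^\ast\|_1$ on the first $N$ coordinates. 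Summing over steps and choosing parameters small yields (a) and (b). For (c), once $\varphibd$ is bounded and the strict separations hold, the right-hand sides of \eqref{eq:psi1}--\eqref{eq:psii} are Lipschitz in $(\zeta_1,\ldots,\zeta_N)$, and combining this with the Lipschitz property of $\Gamma_N$ yields uniqueness via a standard Gr\"onwall argument in finite dimensions (the tail coordinates $i\ge N$ satisfy $\varphi_i=1$ and coincide with the LLN equations, which are also uniquely solvable).

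The main obstacle is part (a) in the $\|\cdot\|_{1,\infty}$ norm: the corresponding estimate in \cite{BudhirajaFriedlanderWu2021many} used only that each individual coordinate converges, whereas here the sum of coordinate errors must be small. The remedy, as outlined above, is to perform a hard truncation to finitely many coordinates before introducing any perturbations, so that every subsequent modification lives in finite dimensions where \eqref{eq:Skorokhod-Lipschitz} is available; recall from Remark \ref{rmk:Skorokhod-map} that the infinite-dimensional Skorokhod map $\Gamma_\infty$ is not Lipschitz in $\bell_1$, which is precisely why a direct infinite-dimensional argument cannot be used. The detailed verification of the estimates in each of the approximation steps is deferred to Appendix \ref{sec:appendix}.
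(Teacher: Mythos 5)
The overall strategy you describe—reduce to finitely many coordinates so that the Lipschitz property of the finite-dimensional Skorokhod map $\Gamma_N$ in \eqref{eq:Skorokhod-Lipschitz} is available, then run the approximation scheme of \cite[Lemma~5.1]{BudhirajaFriedlanderWu2021many}—is consistent with what the paper does. However, one key step is misjustified, and the implementation is genuinely different from the paper's in a way that matters.

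First, the justification for the tail estimate is incorrect. You claim $\sum_{i\ge N}\sup_{t\le T}|\zeta_i^*(t)| \to 0$ and $\sum_{i\ge N}\sup_{t\le T}|\psi_i^*(t)| \to 0$ \emph{because} the paths are continuous and $\bell_1$-valued on a compact interval. This is false for generic continuous $\bell_1$-valued paths: one can construct $\zetabd \in \Cmb([0,T]:\bell_1^\downarrow)$ (e.g.\ by linearly interpolating a sequence $\xbd^{(k)} = (a_k,\ldots,a_k,0,\ldots)$ with $k$ copies of $a_k = 1/(k\log k)$, visited at times $t_k = 1/k$) for which $\sup_t\|\zetabd(t)\|_1 < \infty$ yet $\sum_i \sup_t \zeta_i(t) = \infty$. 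Compactness of the range of a continuous $\bell_1$-valued path gives uniform tail smallness $\sup_t\sum_{i\ge N}|\zeta_i(t)|\to 0$, but that does not control $\sum_{i\ge N}\sup_t|\zeta_i(t)|$, which can be much larger. The tail bound you need does hold in the present context, but for different reasons: since $\zeta_i^* = \psi_i^*$ for $i$ beyond the finite reflection depth $M$ of Remark~\ref{rem:2.3}, one reduces to $\psibd^*$; then from \eqref{eq:psii} one has $\sup_t|\psi_i^*(t)| \le x_i + \int_{\XT}\one_{[0,\zeta_i^*(s)-\zeta_{i+1}^*(s))}(y)\varphi_i^*(s,y)\,ds\,dy$, and, exactly as in the proofs of Lemmas \ref{lem:tightness} and \ref{lem:tightCompact}, Lemma~\ref{lem:ellProp}(a) plus the finite cost and the monotonicity bound $\zeta_N^*(s)\le \|\zetabd^*(s)\|_1/N$ make $\sum_{i\ge N}$ of the second term go to zero. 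So the conclusion is right, but only because of the cost bound and the reflection structure, not the abstract regularity you invoke.

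Second, even with the tail bound in hand, there is a hidden stability problem in the ``hard truncation'' of the controls. Replacing $\varphi_i^*$ by $1$ for $i \ge N$ does not change only coordinates $i\ge N$ of the trajectory: because $\psi_i$ in \eqref{eq:psii} involves $\zeta_i - \zeta_{i+1}$, the perturbation propagates downward through the coupled integral equations (and through the reflection), so every coordinate of the new $(\zetabd,\psibd)$ differs from $(\zetabd^*,\psibd^*)$. Showing the $\|\cdot\|_{1,\infty}$ distance is $\le\sigma/8$ then requires a global Gr\"onwall-plus-Lipschitz stability argument for the truncated control, not the ``direct inspection'' you gesture at. The paper avoids this altogether by following \cite{BudhirajaFriedlanderWu2021many}: rather than truncating controls (which moves the whole trajectory), the approximation scheme there directly perturbs the \emph{trajectory} on finitely many coordinates while keeping the tail coordinates $(\zeta_i,\psi_i)_{i\ge K}$ and the tail controls literally unchanged, and only then adjusts the finitely many controls $\varphi_1,\ldots,\varphi_K$ to remain in $\Smc(\zetabd,\psibd)$. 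With this design the $\|\cdot\|_{1,\infty}$ error is a \emph{finite} sum $\sum_{k<K}\|\zeta_k-\zeta_k^*\|_\infty + \sum_{k<K}\|\psi_k-\psi_k^*\|_\infty$, so passing from the product topology of \cite[Lemma~5.1(a)]{BudhirajaFriedlanderWu2021many} to $\|\cdot\|_{1,\infty}$ is essentially bookkeeping; no tail estimate or infinite-coordinate stability argument is needed. Your route can probably be made to work, but it is more delicate than what you wrote, and it is not the route the paper takes.
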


We now complete the proof of the lower bound using this result.
Fix $G\in\Cmb_b(\pathspace)$ and $\sigma\in(0,1)$. Select a trajectory $(\zetabd^*,\psibd^*)$ which is $\sigma$-optimal for the RHS of \eqref{eqn:LaplaceLowerBound}, namely
\begin{equation}
	\label{eq:lowerbd_pf1}
	\rate(\zetabd^*,\psibd^*)+G(\zetabd^*,\psibd^*) \leq \inf_{(\zetabd,\psibd)\in\Cmc}\{\rate(\zetabd,\psibd)+G(\zetabd,\psibd)\}+\sigma.
\end{equation}
By continuity of $G$ and Lemma \ref{lem:uniqueness}, we can find $(\bar\zetabd,\bar\psibd) \in \Cmc$ and $\bar\varphibd \in S_T(\bar\zetabd,\bar\psibd)$ such that the uniqueness property in  Lemma \ref{lem:uniqueness} holds (with $\varphibd$ replaced by $\bar \varphibd$) and 
\begin{align}
	\sum_{i=0}^\infty\int_{\XT}\vartheta_i\ell(\varphibar_i(s,y))\,ds\,dy+G(\bar\zetabd,\bar\psibd) & \le \rate(\bar\zetabd,\bar\psibd)+G(\bar\zetabd,\bar\psibd)+\sigma \notag \\
	& \leq \rate(\zetabd^*,\psibd^*)+G(\zetabd^*,\psibd^*)+2\sigma. \label{eq:lowerbd_pf2}
\end{align}
Consider the controlled system \eqref{eq:controlled-process} with
control $\varphibd^n \in \bar{\controlset}_b$ given by
\begin{align*}
	\varphi^n_i(s,y) & := \frac{1}{n} \one_{\{\varphibar_i(s,y) \le \frac{1}{n}\}} + \varphibar_i(s,y) \one_{\{\frac{1}{n} < \varphibar_i(s,y) < n\}} + n \one_{\{\varphibar_i(s,y) \ge n \}}, \quad i \le n, \\
	\varphi^n_i(s,y) & := 1, \quad i > n.
\end{align*}
Then there is an $M_0 \in (0,\infty)$ such that the sequence $\{\varphibd^n\}$ satisfies \eqref{eqn:controlBound}.
Furthermore, it is easily checked that
 $\varphibd^n \to \bar\varphibd$ (in $S_{M_0}$).
It then follows from Lemmas \ref{lem:tightness} and \ref{lem:convergence} that $\{(\bar\Xbd^n, \bar\Ybd^n, \bar\etabd^n, \varphibd^n)\}_{n \in \Nmb}$ is tight and any limit point $(\bar\Xbd,\bar\Ybd,\bar\etabd,\varphibd)$, given on some probability space $(\Omega^*,\Fmc^*,\Pmb^*)$,
satisfies $(\bar\Xbd, \bar\Ybd) \in \Cmc$ and $\varphibd \in \Smc(\bar\Xbd, \bar\Ybd)$ a.s.\ $\Pmb^*$.
From the fact that $\varphibd^n\to\bar\varphibd$ we must have $\varphibd=\bar\varphibd$.
Thus $\bar\varphibd\in\Smc(\bar\Xbd, \bar\Ybd)$  and since we also have $\bar\varphibd\in\Smc(\bar\zetabd,\bar\psibd)$, we must have  $(\bar\Xbd, \bar\Ybd)=(\bar\zetabd,\bar\psibd)$ a.s.\ $\Pmb^*$ from the uniqueness property noted above.
Noting that $\ell(\varphi^n_i(s,y))\leq \ell(\varphibar_i(s,y))$ for all $n\in\Nmb$ and $(s,y)\in[0,T]\times[0,1]$,
it then follows from the variational representation \eqref{eqn:variationalRep} and \eqref{eq:lowerbd_pf1}--\eqref{eq:lowerbd_pf2} that
\begin{align*}
	\limsup_{n\to\infty}-\frac{1}{n}\log\Emb e^{-nG(\Xbd^n,\Ybd^n)} 
	&\leq \limsup_{n\to\infty}\Emb\left\{\sum_{i=0}^\infty\int_{\XT}\vartheta^n_i\ell(\varphi^n_i(s,y))\,ds\,dy+G(\bar{\Xbd}^{n},\bar{\Ybd}^{n})\right\}\\
	&\leq \sum_{i=0}^\infty\int_{\XT}\vartheta_i\ell(\varphibar_i(s,y))\,ds\,dy+G(\bar\zetabd,\bar\psibd)\\
	&\leq\inf_{(\zetabd,\psibd)\in\Cmc}\left\{\rate(\zetabd,\psibd) + G(\zetabd,\psibd)\right\}+3\sigma.
\end{align*}
The inequality in \eqref{eqn:LaplaceLowerBound} now follows upon sending $\sigma\to0$. \qed

\section{Compact Sub-level Sets}
\label{sec:compactSets}

In this section we prove the third statement in the proof of Theorem \ref{thm:mainResult}, namely the property that $\rate$ is a rate function.
For this we need to show that for every $M \in \Nmb$, the set $\Upsilon_M := \{(\zetabd,\psibd)\in \pathspace:
\rate(\zetabd,\psibd)\le M\}$ is compact.
Now fix such an $M$ and a sequence $\{(\zetabd^n,\psibd^n)\} \subset \Upsilon_M$. It suffices to show that the sequence has a convergent subsequence with the limit in the set $\Upsilon_M$.
From the definition of $\rate$, it follows that $(\zetabd^n,\psibd^n)\in\Cmc$
and there exists a control $\varphibd^n\in\Smc(\zetabd^n,\psibd^n)$ such that for every $n \in \Nmb$
\begin{equation}
	\label{eqn:controlBoundCompact}
	\sum_{i=0}^\infty\int_{\XT} \vartheta_i\ell(\varphi^n_i(s,u))\,ds\,dy
	\leq \rate(\zetabd^n,\psibd^n)+\frac{1}{n}
	\leq M+\frac{1}{n}.
\end{equation}
We follow the convention that $\zeta^n_0 = 1$.
Recall the compact metric spaces $S_N$, for $N \in \Nmb$, introduced in Section \ref{sec:tight}.
From \eqref{eqn:controlBoundCompact} we have $\varphibd^n \in S_{M_0}$ with $M_0:=(1+\lambda^{-1})(M+1)$.
We first show pre-compactness of the sequence $\{(\zetabd^n,\psibd^n,\varphibd^n)\}_{n\in\Nmb}$. 

\begin{Lemma}
	\label{lem:tightCompact}
	The sequence
	$\{(\zetabd^n,\psibd^n,\varphibd^n)\}_{n\in\Nmb}$ is pre-compact in $\Cmb([0,T]:\bell_1^\downarrow \times \bell_1) \times S_{M_0}$.
\end{Lemma}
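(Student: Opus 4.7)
My plan is to apply the Arzel\`a--Ascoli theorem in $\Cmb([0,T]:\bell_1^\downarrow \times \bell_1)$, combined with the compactness of $S_{M_0}$. Tightness of $\{\varphibd^n\}_n$ in $S_{M_0}$ is immediate. For the trajectories, I would first establish coordinate-wise equicontinuity and boundedness of $\psi_i^n$ and $\zeta_i^n$, uniformly in $n$: writing $\varphi_i^n \leq K + \gamma(K)\ell(\varphi_i^n)$ via Lemma \ref{lem:ellProp}(a) inside \eqref{eq:psi1}--\eqref{eq:psii}, for any $r \leq t$ and $i \in \Nmb$
\begin{equation*}
|\psi_i^n(t) - \psi_i^n(r)| \leq (1+\lambda)\bigl[K(t-r) + \gamma(K) M_0\bigr],
\end{equation*}
so that choosing $K$ large and then $|t-r|$ small yields the claimed equicontinuity; boundedness is analogous.

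The central step is a uniform-in-$n$ analogue of Remark \ref{rem:2.3}, which upgrades coordinate-wise tightness to $\bell_1$-tightness. Summing \eqref{eq:psi1}--\eqref{eq:psii} over $i$, dropping the nonpositive departure terms, and again bounding $\int_{\XT}\varphi_0^n$ via Lemma \ref{lem:ellProp}, I expect to obtain $\sum_i \psi_i^n(t) \leq C'$ for a finite constant $C'$ independent of $n$ and $t$ (using also $\sup_n \|\xbd^n\|_1 < \infty$). Since $\zeta_i^n = \psi_i^n + \eta_{i-1}^n - \eta_i^n$ with $\eta_i^n \geq 0$, telescoping gives $\|\zetabd^n(t)\|_1 \leq C'$, and the monotonicity built into $\zetabd^n(t) \in \bell_1^\downarrow$ then forces $\zeta_i^n(t) \leq C'/i$ uniformly in $n$ and $t$. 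Fixing any integer $N_0 > C'$, we have $\zeta_i^n(t) < 1$ uniformly for $i \geq N_0$, so the complementarity $\int_0^T (1-\zeta_i^n)\,d\eta_i^n = 0$, combined with the monotonicity of $\eta_i^n$ and $\eta_i^n(0)=0$, forces $\eta_i^n \equiv 0$ for $i \geq N_0$; hence $\zeta_i^n = \psi_i^n$ for all $i \geq N_0 + 1$.

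Given this uniform truncation, the $\bell_1$-tail estimate follows: for $k \geq N_0+1$,
\begin{equation*}
\sum_{i \geq k} \zeta_i^n(t) = \sum_{i \geq k} \psi_i^n(t) \leq \sum_{i \geq k} x_i^n + K\int_0^T \zeta_k^n(s)\,ds + \gamma(K) M_0 \leq \sum_{i \geq k} x_i^n + \frac{KT C'}{k} + \gamma(K) M_0,
\end{equation*}
where the first term vanishes uniformly in $n$ as $k \to \infty$ (since $\xbd^n \to \xbd$ in $\bell_1$), the second vanishes for any fixed $K$ as $k \to \infty$, and the third vanishes as $K \to \infty$. A parallel bound on the $\bell_1$-modulus of continuity --- split into the first $N_0$ coordinates, where $(\zeta_1^n,\ldots,\zeta_{N_0}^n) = \Gamma_{N_0}((\psi_1^n,\ldots,\psi_{N_0}^n))$ and the finite-dimensional Lipschitz estimate \eqref{eq:Skorokhod-Lipschitz} is available, and the tail, where $\zeta_i^n = \psi_i^n$ --- together with Arzel\`a--Ascoli then concludes the proof.

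The main obstacle, as emphasized in the introduction, is that $\Gamma_\infty$ is not Lipschitz on $\Dmb([0,T]:\bell_1)$, so one cannot directly transfer $\bell_1$-tightness from $\{\psibd^n\}$ to $\{\zetabd^n\}$. The uniform truncation above precisely sidesteps this by showing that all nontrivial reflection happens within the first $N_0$ coordinates (for a single $N_0$ depending only on the cost bound $M$, $\lambda$, and $\sup_n \|\xbd^n\|_1$), reducing the analysis past coordinate $N_0$ to a trivial no-reflection regime and within the first $N_0$ coordinates to the finite-dimensional Lipschitz setting where \eqref{eq:Skorokhod-Lipschitz} applies.
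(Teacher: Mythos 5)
Your proposal is correct and follows essentially the same route as the paper's proof: both rest on the uniform bound $\|\zetabd^n(t)\|_1 \le C'$ obtained by summing \eqref{eq:psi1}--\eqref{eq:psii} and bounding $\int\varphi_0^n$ via Lemma \ref{lem:ellProp}, the consequent $\zeta_k^n \le C'/k$ from monotonicity, the fixed integer cutoff ($N_0$, called $L$ in the paper) past which $\zeta_i^n < 1$ forces $\eta_i^n \equiv 0$ and $\zeta_i^n = \psi_i^n$, and the finite-dimensional Lipschitz Skorokhod map $\Gamma_{N_0}$ for the remaining coordinates, assembled via Arzel\`a--Ascoli. The only difference is presentational: the paper first establishes pre-compactness of $\{\psibd^n\}$ in $\Cmb([0,T]:\bell_1)$ and only then introduces the cutoff to transfer to $\zetabd^n$, whereas you introduce the cutoff up front, but the ingredients and estimates are identical.
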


\begin{proof}
	Pre-compactness of $\{\varphibd^n\}_{n\in\Nmb_0}$ is  immediate from the compactness of $S_{M_0}$.
	
	We next prove pre-compactness of $\{\psibd^n(t)\}_{n\in\Nmb}$ in $\bell_1$ for fixed $t \in [0,T]$.
	It suffices to show that $\{\psi^n_i(t)\}_{n \in \Nmb}$ is pre-compact for each $i \in \Nmb$ and that
    \begin{equation}
		\label{eq:compact-criterion}
		\lim_{k \to \infty} \sup_{n \in \Nmb} \sum_{i=k}^\infty |\psi^n_i(t)| = 0.
	\end{equation}
    Now fix $i \in \Nmb$.
    From \eqref{eq:psi1} and \eqref{eq:psii} we have
    \begin{align*}
        |\psi^n_i(t)| & \le x_i + \int_{\Xt} [\lambda\varphi_0^n(s,y) + \varphi_i^n(s,y)]\,ds\,dy \\
        & \le x_i + \int_{\Xt} [\lambda(\ell(\varphi_0^n(s,y))+2) + \ell(\varphi_i^n(s,y))+2]\,ds\,dy \\
        & \le 1+(M+1)+2(\lambda+1)T,
    \end{align*}
    where the second inequality uses Lemma \ref{lem:ellProp}(b) and the last inequality uses \eqref{eqn:controlBoundCompact}.
    So we have pre-compactness of $\{\psi^n_i(t)\}_{n \in \Nmb}$.
	To show \eqref{eq:compact-criterion}, first note that by \eqref{eq:zeta_psi_eta} and non-negativity of $\zetabd^n(t)$, we have
	\begin{equation}
		\label{eq:zeta-bd}
		\|\zetabd^n(t)\|_1 = \sum_{i=1}^\infty \psi^n_i(t) \le \|\xbd\|_1 + \lambda \int_{\Xt} \varphi_0(s,y)\,dsdy \le \|\xbd\|_1 + (M+1) + 2\lambda T,
	\end{equation}
	where the first inequality uses \eqref{eq:psi1}--\eqref{eq:psii} and the last inequality uses Lemma \ref{lem:ellProp}(b) and \eqref{eqn:controlBoundCompact}.  
	Again from \eqref{eq:psii} we have, for any $K>0$, 
	\begin{align*}
		\limsup_{k \to \infty} \sup_{n \in \Nmb} \sum_{i=k}^\infty |\psi^n_i(t)| 
		& \le \limsup_{k \to \infty} \sup_{n \in \Nmb} \sum_{i=k}^\infty \left(x_i + \int_{\Xt} \one_{[0,\zeta_i^n(s)-\zeta_{i+1}^n(s))}(y) \varphi_i^n(s,y)\,ds\,dy \right) \\
		& \le \limsup_{k \to \infty} \sup_{n \in \Nmb} \sum_{i=k}^\infty \int_{\Xt} \one_{[0,\zeta_i^n(s)-\zeta_{i+1}^n(s))}(y) [K + \gamma(K)\ell(\varphi_i^n(s,y))]\,ds\,dy \\
		& \le \limsup_{k \to \infty} \sup_{n \in \Nmb} K \int_0^t \zeta_k^n(s)\,ds + \lim_{k \to \infty} \sup_{n \in \Nmb} \gamma(K) (M+\frac{1}{n}) \\
		& \le \lim_{k \to \infty} \sup_{n \in \Nmb} K \int_0^t \frac{\|\zeta^n(s)\|_1}{k}\,ds + \gamma(K) (M+1) \\
		& = \gamma(K) (M+1) 
	\end{align*}
	which converges to $0$ as $K \to \infty$.
	Here the second line follows from $\xbd \in \bell_1$ and Lemma \ref{lem:ellProp}(a), the third uses \eqref{eqn:controlBoundCompact}, the fourth uses the monotonicity of $k\mapsto \zeta^n_k(t)$, and the last uses \eqref{eq:zeta-bd}.
	This gives \eqref{eq:compact-criterion} and the pre-compactness of $\{\psibd^n(t)\}_{n\in\Nmb}$ in $\bell_1$.
	
	Next we show that  $\{\psibd^n\}$ is equicontinuous. 
	Note that for any $0 < t-s \le  \delta$ and $K>0$,
	\begin{align*}
		\|\psibd^n(t)-\psibd^n(s)\|_1 
		&\leq \lambda \int_{[s,t]\times[0,1]}\varphi^n_0(u,y)\,du\,dy + \sum_{i=1}^\infty \int_{[s,t]\times[0,1]} \one_{[0,\zeta_i^n(t)-\zeta_{i+1}^n(t))}(y) \varphi^n_i(u,y) \,du\,dy\\
		&\leq \lambda \int_{[s,t]\times[0,1]} [K + \gamma(K)\ell(\varphi^n_0(u,y))]\,du\,dy \\
		&\qquad +\sum_{i=1}^\infty \int_{[s,t]\times[0,1]} \one_{[0,\zeta_i^n(t)-\zeta_{i+1}^n(t))}(y) [K + \gamma(K)\ell(\varphi^n_i(u,y))] \,du\,dy \\
		&\leq (\lambda+1)K\delta + \gamma(K)(M+1),
	\end{align*}
	where the second line uses Lemma \ref{lem:ellProp}(a) and the last uses \eqref{eqn:controlBoundCompact}.
	Therefore,
	\begin{equation*}
		\limsup_{\delta\to0}\sup_{n\in\Nmb}\sup_{|t-s|\leq \delta} \|\psibd^n(t)-\psibd^n(s)\|_1 \leq \gamma(K)(M+1)
	\end{equation*}
	and the equicontinuity of $\{\psibd^n\}$ follows upon sending $K\to\infty$. 
	
	Using the Arzela-Ascoli Theorem, we have pre-compactness of $\{\psibd^n\}_{n\in\Nmb}$ in $\Cmb([0,T]:\bell_1)$. 
	Let $L > \|\xbd\|_1 + (M+1) + 2\lambda T$ be an integer.
	From \eqref{eq:zeta-bd} we see that 
	$$\sup_{n \in \Nmb} \sup_{0 \le t \le T} \zeta^n_L(t) \le \sup_{n \in \Nmb} \sup_{0 \le t \le T} \|\zetabd^n(t)\|_1 / L < 1.$$
	Therefore for each $n \in \Nmb$, $\eta_i^n \equiv 0$ for $i \ge L$, $\zeta_i^n = \psi_i^n$ for $i > L$, and $(\zeta^n_i,\eta^n_i)_{1 \le i \le L}$ is the unique solution to the finite-dimensional Skorokhod problem for $(\psi^n_i)_{1 \le i \le L}$ associated with the reflection matrix $R_L$. 
	In particular, 
	$$\zeta_i^n(t) = \psi_i^n(t) + \eta_{i-1}^n(t) - \eta_i^n(t), \quad i < L; \qquad \zeta_L^n(t) = \psi_L^n(t) + \eta_{L-1}^n(t).$$
	So pre-compactness of $\{(\zetabd^n,\psibd^n)\}_{n\in\Nmb}$ in $\Cmb([0,T]:\bell_1^\downarrow \times \bell_1)$ follows immediately from the pre-compactness of $\{\psibd^n\}_{n\in\Nmb}$ and the Lipschitz property in \eqref{eq:Skorokhod-Lipschitz}.
\end{proof}

We now return to the proof of compactness of $\Upsilon_M$. 
Consider a sequence $\{(\zetabd^n,\psibd^n)\}_{n \in \Nmb} \subset \Upsilon_M$. Then Lemma \ref{lem:tightCompact} shows that such a sequence is pre-compact. 
It then follows from \cite[Lemma 6.2]{BudhirajaFriedlanderWu2021many} that any limit point
$(\zetabd,\psibd)$ of $\{(\zetabd^n,\psibd^n)\}_{n \in \Nmb}$ is in $\Upsilon_M$. 
This establishes the desired compactness.  \qed

\section{Bounds on Probabilities of Long Queues}
\label{sec:examples}
In this section we prove Theorem \ref{thm:large-customers}. 
Fix $\varepsilon>0$ and recall the notation $G_\varepsilon^n, F_\varepsilon^n$ from the statement of the theorem. 
Since $\Imc(\zetabd,\psibd) = \infty$ for $(\zetabd,\psibd) \notin \Cmc$, we define the following (relatively) open and closed sets in $\Cmc$ for $\delta > 0$: 
\begin{equation*}
	G_\delta := \{(\zetabd,\psibd) \in \Cmc : \|\zetabd\|_{1,\infty} > \|\xbd\|_1 + \delta\}, \quad
	F_\delta := \{(\zetabd,\psibd) \in \Cmc : \|\zetabd\|_{1,\infty} \ge \|\xbd\|_1 + \delta\}.
\end{equation*}
In order to prove the first statement in the theorem we first evaluate $\Imc(F_\varepsilon)$, where $\Imc(A):=\inf_{(\zetabd,\psibd) \in A} \Imc(\zetabd,\psibd)$ for $A \subset \pathspace$. 

As a preparation for evaluating $\Imc(F_\varepsilon)$, we state and prove the following well-posedness result for trajectories driven by a bounded arrival control $\alpha$ and a bounded and almost continuous ``master'' service control $\theta$.

\begin{Lemma}
	\label{lem:special-well-posedness}
	Suppose $\xbd \in \bell_1^\downarrow$ and $\alpha, \theta \colon [0,T] \times [0,1] \to \Rmb_+$ satisfies
	$$\int_{\XT} \ell(\alpha(s,y))\,ds\,dy < \infty, \quad \|\theta\|_\infty:= \sup_{(s,y) \in [0,T] \times [0,1]} \theta(s,y) < \infty,$$
    and that $\theta(s,y)$ is continuous at a.e.\ $y \in [0,1]$ for each $s \in [0,T]$.
	Then there exists a unique pair $(\zetabd,\psibd) \in \Cmc$ such that
	\begin{align}
		\psi_1(t) & = x_1 + \lambda \int_{\Xt}\alpha(s,y)\,ds\,dy  - \int_{\Xt}\one_{[1-\zeta_1(s),1-\zeta_2(s))}(y) \theta(s,y) \,ds\,dy, \label{eq:special-psi1}\\
		\psi_i(t) & = x_i - \int_{\Xt}\one_{[1-\zeta_i(s),1-\zeta_{i+1}(s))}(y) \theta(s,y) \,ds\,dy, \quad i \ge 2. \label{eq:special-psii}
	\end{align}		
	In particular, $\varphibd \in \Smc(\zetabd,\psibd)$ where $\varphi_0=\alpha$ and 
	\begin{equation*}
		\varphi_i(s,y) = \theta(s,y+1-\zeta_i(s)) \one_{[0,\zeta_i(s)-\zeta_{i+1}(s))}(y) + \one_{[\zeta_i(s)-\zeta_{i+1}(s),1)}(y), \quad i \ge 1.
	\end{equation*}
\end{Lemma}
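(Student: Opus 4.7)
The plan is to reduce the infinite-dimensional Skorokhod problem to an $M$-dimensional one via an a priori $\bell_1$-bound, and then apply a Picard/Banach contraction argument that exploits the Lipschitz property \eqref{eq:Skorokhod-Lipschitz} of the finite-dimensional Skorokhod map $\Gamma_M$. The main obstacle is that $\Gamma_\infty$ is \emph{not} Lipschitz as a map on $\Cmb([0,T]:\bell_1)$ (cf.\ Remark \ref{rmk:Skorokhod-map}), so the essential first step is to eliminate all but finitely many reflection terms.

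First I would establish the a priori bound on the total mass. Summing \eqref{eq:special-psi1}--\eqref{eq:special-psii} over $i$, and using that the intervals $\{[1-\zeta_i(s), 1-\zeta_{i+1}(s))\}_{i\ge 1}$ are disjoint with union $[1-\zeta_1(s),1)$, together with telescoping $\sum_i (\eta_{i-1}(t) - \eta_i(t)) = 0$ (valid since $\zeta_i(t) \to 0$ forces $\eta_i(t) = 0$ for all large $i$), one obtains
\begin{equation*}
	\|\zetabd(t)\|_1 = \sum_{i=1}^\infty \psi_i(t) = \|\xbd\|_1 + \lambda \int_{\Xt} \alpha(s,y)\,ds\,dy - \int_0^t \int_{1-\zeta_1(s)}^1 \theta(s,y)\,dy\,ds \le C,
\end{equation*}
where $C := \|\xbd\|_1 + \lambda \int_{\XT}\alpha$, finite by Lemma \ref{lem:ellProp}(b) and the entropy hypothesis on $\alpha$. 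Fixing any $M \in \Nmb$ with $M > C$, the monotonicity $\zeta_{i+1} \le \zeta_i$ forces $\zeta_M(t) \le C/M < 1$ for all $t$, so the reflection condition yields $\eta_j \equiv 0$ for every $j \ge M$. Thus, as in Remark \ref{rem:2.3}, one is really only solving an $M$-dimensional Skorokhod problem on the first $M$ coordinates.

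With this reduction I would prove existence and uniqueness simultaneously via contraction. Define $\Phi \colon \Kmc \to \Kmc$ by $\Phi(\zetabd) := \Gamma_\infty(\psibd[\zetabd])$, where $\Kmc := \{\zetabd \in \Cmb([0,T]:\bell_1^\downarrow) : \sup_{0 \le t \le T}\|\zetabd(t)\|_1 \le C,\, \zetabd(0) = \xbd\}$ and $\psibd[\zetabd]$ is defined by \eqref{eq:special-psi1}--\eqref{eq:special-psii}. The same mass-conservation calculation shows $\Phi(\Kmc) \subset \Kmc$; monotonicity and non-negativity of the image follow from $\theta \ge 0$, $\xbd \in \bell_1^\downarrow$, and the structure of $R_\infty$. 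Estimating the symmetric difference of indicator functions gives
\begin{equation*}
	\|\psibd[\zetabd](t) - \psibd[\zetabdtil](t)\|_1 \le 2\|\theta\|_\infty \int_0^t \|\zetabd(s) - \zetabdtil(s)\|_1 \,ds,
\end{equation*}
and since on $\Kmc$ only the first $M$ components undergo reflection, \eqref{eq:Skorokhod-Lipschitz} supplies a Lipschitz constant $C_M$ for the passage from $\psibd[\cdot]$ to $\Phi(\cdot)$ in $\bell_1$. A Gronwall argument then shows $\Phi$ is a contraction on $[0,T_1]$ for $T_1 > 0$ sufficiently small, yielding a unique fixed point; concatenating on intervals $[T_1, 2T_1], \dotsc$ produces the unique solution on $[0,T]$. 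Applying the same estimate to the difference of any two hypothetical solutions gives a Gronwall inequality that forces them to agree, establishing global uniqueness.

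Finally, I would verify that the stated $\varphibd$ lies in $\Smc(\zetabd, \psibd)$ by direct substitution into \eqref{eq:psi1}--\eqref{eq:psii}. Since the second summand of $\varphi_i$ is supported on $[\zeta_i - \zeta_{i+1}, 1)$ and is multiplied by $\one_{[0,\zeta_i - \zeta_{i+1})}(y)$, only the first summand contributes; the change of variables $z = y + 1 - \zeta_i(s)$ then converts the resulting integral into $\int_0^t \int_{1-\zeta_i(s)}^{1-\zeta_{i+1}(s)} \theta(s,z)\,dz\,ds$, exactly matching \eqref{eq:special-psii} (and analogously for $i=1$ with the arrival term). Measurability of $\varphi_i$ is ensured by the a.e.\ continuity hypothesis on $\theta(s,\cdot)$. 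The hard part of the whole argument is the contraction step, where one must bound the $\bell_1$-norm using only the finite-dimensional Skorokhod estimate; the a priori bound of paragraph two is precisely what makes this bypass of the non-Lipschitz map $\Gamma_\infty$ possible.
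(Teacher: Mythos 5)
Your uniqueness argument matches the paper's exactly: a priori $\bell_1$-bound $\|\zetabd(t)\|_1 \le C := \|\xbd\|_1 + \lambda\int_{\XT}\alpha$, reduction to a $K$-dimensional Skorokhod problem (no reflection for indices $\geq K$), the Lipschitz estimate \eqref{eq:Skorokhod-Lipschitz}, and Gronwall. Your verification that $\varphibd\in\Smc(\zetabd,\psibd)$ by the shift $z=y+1-\zeta_i(s)$ is also the same computation the paper does. The existence step, however, takes a genuinely different route: the paper constructs the solution \emph{probabilistically}, passing to the limit of controlled occupancy processes via Lemmas~\ref{lem:tightness} and~\ref{lem:convergence} (with $\varphi_i^n$ built from the master control $\theta$ using $\bar X^n_i(s-)$), so that the pre-limit is automatically a valid occupancy vector and the limit inherits membership in $\Cmc$ for free. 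You instead propose a deterministic Picard/Banach contraction on $\Kmc$.

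This is where there is a genuine gap: the self-map property $\Phi(\Kmc)\subset\Kmc$ \emph{fails}. The assertion that ``monotonicity and non-negativity of the image follow from $\theta\ge 0$, $\xbd\in\bell_1^\downarrow$, and the structure of $R_\infty$'' is precisely the problematic hand-wave. The reflection matrix $R_\infty$ keeps $\phi_i\le 1$ but does nothing to keep $\phi_i\ge 0$; and because the departure rate in the iterate $\psibd[\zetabd]$ is $\int_{1-\zeta_i(s)}^{1-\zeta_{i+1}(s)}\theta(s,y)\,dy$ for the \emph{input} $\zetabd$ rather than the output $\Gamma_\infty(\psibd[\zetabd])$, the free process can be driven below zero without the Skorokhod map intervening. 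For a concrete counterexample take $\theta\equiv c$, $\alpha\equiv 1$, $\lambda< c$, and $\xbd=(1,1/2,0,\dots)$. Feed in the \emph{constant} input $\zetabd(t)\equiv\xbd\in\Kmc$. Then $\psi_2[\zetabd](t)=\tfrac12-\tfrac{c}{2}t$, and $\eta_1(t)=(\lambda-\tfrac{c}{2})^+\,t$, so $\phi_2(t)=\tfrac12+(\lambda-c)t\wedge(\tfrac12-\tfrac{c}{2}t)$, which turns negative by time $\tfrac12/(c-\lambda)$, and $\Phi(\zetabd)\notin\Kmc$ for $T$ not small. Your a priori mass bound is also circular here: the telescoping identity gives $\sum_i\Gamma_\infty(\psibd)_i(t)\le C$, which equals $\|\Phi(\zetabd)(t)\|_1$ only \emph{if} the components are nonnegative. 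In the true fixed point the departure rate self-limits as $\zeta_i-\zeta_{i+1}\to 0$, which is why the solution stays in $\bell_1^\downarrow$; but that self-consistency is exactly what the Picard iterate breaks. To rescue a deterministic construction you would need either a two-sided Skorokhod map (reflecting at $0$ and $1$) or a monotone iteration scheme with a proof that the monotone limit is a fixed point — a contraction on $\Kmc$ as you set it up does not close.
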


\begin{proof}
	We first show uniqueness.
	Suppose there are two such pairs $(\zetabd,\psibd), (\bar\zetabd,\bar\psibd) \in \Cmc$. Denote the corresponding reflection terms by $\etabd$ and $\bar \etabd$, respectively.
	Note that $C:=\|\xbd\|_1+\lambda\int_{\XT} \alpha(s,y)\,ds\,dy < \infty$ by Lemma \ref{lem:ellProp}(b).
	Let $K:=\lceil C +1 \rceil \in \Nmb$. 
	Then there is no reflection for coordinates $i \ge K$, i.e. $\eta_i= \bar \eta_i = 0$ for all $i\ge K$, and hence
	$$\sum_{i=K+1}^\infty |\zeta_i(t)-\zetabar_i(t)| = \sum_{i=K+1}^\infty |\psi_i(t)-\psibar_i(t)|.$$
	For coordinates $i \le K$, using the $C_K$-Lipschitz property in \eqref{eq:Skorokhod-Lipschitz}, we have
	$$\sum_{i=1}^{K} |\zeta_i(t)-\zetabar_i(t)| \le C_K \sum_{i=1}^{K} |\psi_i(t)-\psibar_i(t)|.$$
	Therefore
	\begin{align*}
		\sum_{i=1}^\infty |\zeta_i(t)-\zetabar_i(t)| & \le (1+C_K) \sum_{i=1}^\infty |\psi_i(t)-\psibar_i(t)| \\
		& \le (1+C_K) \|\theta\|_\infty\sum_{i=1}^\infty \int_0^t  (|\zeta_i(s)-\zetabar_i(s)|+|\zeta_{i+1}(s)-\zetabar_{i+1}(s)|)\,ds \\
		& \le 2(1+C_K)\|\theta\|_\infty \int_0^t \sum_{i=1}^\infty |\zeta_i(s)-\zetabar_i(s)|\,ds.
	\end{align*}
	Using Gronwall's lemma we get $\sum_{i=1}^\infty |\zeta_i(t)-\zetabar_i(t)| = 0$ for $t \in [0,T]$, namely $\zetabd = \bar\zetabd$.
    From \eqref{eq:special-psi1}-\eqref{eq:special-psii} it then follows that $\psibd = \bar \psibd$ as well.
	This gives uniqueness.
	
	Now we show existence.
	Consider the controlled system \eqref{eq:controlled-process} with controls $\varphibd^n \in \Amcbar_b$ given by 
	\begin{align*}
		\varphi_0^n(s,y) & = \frac{1}{n} \one_{\{\alpha(s,y) \le \frac{1}{n}\}} + \alpha(s,y) \one_{\{\frac{1}{n} < \alpha(s,y) < n\}} + n \one_{\{\alpha(s,y) \ge n \}}, \\ 
        \varphi_i^n(s,y) & = 1, \quad i > n, \\
		\varphi_i^n(s,y) & = \max\{\frac{1}{n}, \theta(s,y+1-\Xbar^{n}_i(s-))\} \one_{[0,\Xbar^{n}_i(s-)-\Xbar^{n}_{i+1}(s-))}(y) \\
		& \quad + \one_{[\Xbar^{n}_i(s-)-\Xbar^{n}_{i+1}(s-),1)}(y), \quad 1 \le i \le n.
	\end{align*}
	Note that $\varphi_i^n$'s make use of values from the ``master'' control $\theta$ within the disjoint $y$-intervals $[1-\Xbar^{n}_i(s-),1-\Xbar^{n}_{i+1}(s-))$.
	Then there is an $M_0 \in (0,\infty)$ such that the sequence $\{\varphibd^n\}$ satisfies \eqref{eqn:controlBound}.
	It then follows from Lemmas \ref{lem:tightness} and \ref{lem:convergence} that the sequence $\{(\bar\Xbd^n, \bar\Ybd^n, \bar\etabd^n, \varphibd^n)\}_{n \in \Nmb}$ is tight and any limit point $(\bar\Xbd,\bar\Ybd,\bar\etabd,\varphibd)$, given on some probability space $(\Omega^*,\Fmc^*,\Pmb^*)$,
	satisfies $(\bar\Xbd, \bar\Ybd) \in \Cmc$ and $\varphibd \in \Smc(\bar\Xbd, \bar\Ybd)$ a.s.\ $\Pmb^*$.
	From the construction of $\varphibd^n$ and the continuity of $\theta(s,y)$ in  a.e.\ $y$, we must have $\varphi_0=\alpha$ and 
	\begin{equation*}
		\varphi_i(s,y) = \theta(s,y+1-\Xbar_i(s)) \one_{[0,\Xbar_i(s)-\Xbar_{i+1}(s))}(y) + \one_{[\Xbar_i(s)-\Xbar_{i+1}(s),1)}(y), \quad i \ge 1, \mbox{ a.e.\ } (s,y) \in \mathbb{X}_T.
	\end{equation*}
	Noting that by a shifting in $y$, we have
	\begin{equation*}
		\int_{\Xt}\one_{[0,\Xbar_i(s)-\Xbar_{i+1}(s))}(y) \varphi_{i}(s,y) \,ds\,dy = \int_{\Xt}\one_{[1-\Xbar_i(s),1-\Xbar_{i+1}(s))}(y) \theta(s,y) \,ds\,dy. 
	\end{equation*}
	Therefore \eqref{eq:special-psi1} and \eqref{eq:special-psii} are satisfied with $(\zetabd,\psibd)$.
	This gives existence and completes the proof.
\end{proof}

Now we are ready to obtain the precise expression of $\Imc(F_\varepsilon)$.
We will first give a candidate optimal trajectory $(\zetabd^*,\psibd^*) \in F_\varepsilon$ and then show that it is indeed optimal.
Let
\begin{equation}
	\label{eq:optimizer}
	a^* :=  \frac{\frac{\varepsilon}{T}+\sqrt{4+(\frac{\varepsilon}{T})^2}}{2} > 1, \quad b^* := 1/a^* = \frac{-\frac{\varepsilon}{T}+\sqrt{4+(\frac{\varepsilon}{T})^2}}{2} < 1.
\end{equation}
Define $(\zetabd^*,\psibd^*)$ as the unique pair such that $(\zetabd^*,\psibd^*) \in \Cmc$ and
\begin{equation}\label{eq:515n}
\begin{aligned}
	\psi_1^*(t) & = 1 + a^* t - b^* \int_0^t (\zeta_1^*(s)-\zeta_2^*(s)) \,ds, \\
	\psi_i^*(t) & = x_i - b^* \int_0^t (\zeta_i^*(s)-\zeta_{i+1}^*(s)) \,ds \quad i \ge 2. 
\end{aligned}
\end{equation}
Intuitively, this means that the controlled arrival rate is $a^*$ and the controlled service rate at each server is $b^*$.
Existence and uniqueness of such a pair $(\zetabd^*,\psibd^*)$ follows from Lemma \ref{lem:special-well-posedness} with $\alpha \equiv a^*$ and $\theta \equiv b^*$.
Taking $\varphibd^*$ as
\begin{equation*}
	\varphi_0^*(s,y) := a^*, \quad \varphi_i^*(s,y) := b^* \one_{[0,\zeta_i^*(s)-\zeta_{i+1}^*(s))}(y) + \one_{[\zeta_i^*(s)-\zeta_{i+1}^*(s),1)}(y),
\end{equation*}
we see that \eqref{eq:psi1} and \eqref{eq:psii} hold, which means $\varphibd^* \in \Smc(\zetabd^*,\psibd^*)$.
Since $a^*>1>b^*$, we have $\zeta^*_1(t) \equiv 1$ and hence
\begin{equation*}
	\|\zetabd^*(T)\|_1 = \sum_{i=1}^\infty \psi^*_i(T) = \|\xbd\|_1 + a^*T - b^*\int_0^T\zeta_1^*(s)\,ds = \|\xbd\|_1 + \varepsilon.
\end{equation*}
This means $(\zetabd^*,\psibd^*) \in F_\varepsilon$ and
\begin{equation*}
	\Imc(F_\varepsilon) \le \Imc(\zetabd^*,\psibd^*) \le \sum_{i=0}^\infty \int_{\XT} \ell(\varphi_i^*(s,y))\,ds\,dy = T\ell(a^*) + T\ell(b^*).
\end{equation*}

Next we show that this is indeed optimal, namely $\Imc(F_\varepsilon) \ge T\ell(a^*) + T\ell(b^*)$.
For this, consider any $(\zetabd,\psibd) \in F_\varepsilon$ and $\varphibd \in \Smc(\zetabd,\psibd)$ with $\sum_{i=0}^\infty \int_{\XT} \ell(\varphi_i(s,y))\,ds\,dy < \infty$.
We claim that we can assume without loss of generality that $\|\zetabd(T)\|_1 \ge \|\xbd\|_1 + \varepsilon$ and $\zeta_1(t) = 1$ for all $t \in [0,T]$. 
To see this, let
\begin{equation*}
	\tau := \inf\{t \in [0,T] : \|\zetabd(t)\|_1 \ge \|\xbd\|_1 + \varepsilon\}
\end{equation*}
be the first time that $\zetabd$ meets the target level.
It suffices to show that there exist some $(\bar\zetabd,\bar\psibd) \in \Cmc$ and $\bar\varphibd \in \Smc(\bar\zetabd,\bar\psibd)$ such that $\zetabar_1(t) = 1$ for all $t \in [0,\tau]$, $\|\bar\zetabd(\tau)\|_1 \ge \|\zetabd(\tau)\|_1$ and
\begin{equation}
	\label{eq:special-cost}
	\sum_{i=0}^\infty \int_{[0,\tau]\times[0,1]} \ell(\varphibar_i(s,y))\,ds\,dy \le \sum_{i=0}^\infty \int_{[0,\tau]\times[0,1]} \ell(\varphi_i(s,y))\,ds\,dy,
\end{equation}
as one can simply follow $(\bar\zetabd,\bar\psibd)$ up to time $\tau$ and then switch to the law of large numbers limit trajectory afterwards.
Since $\ell(\cdot)$ is a convex function, by appealing to Jensen's inequality, we have
\begin{align*}
    \sum_{i=1}^\infty \int_{[0,\tau]\times[0,1]} \ell(\varphi_i(s,y))\,ds\,dy 
    & \ge \sum_{i=1}^\infty \int_{[0,\tau]\times[0,1]} \ell(\varphi_i(s,y)) \one_{[0,\zeta_i(s)-\zeta_{i+1}(s))}(y) \,ds\,dy \\
    & \ge \sum_{i=1}^\infty \int_{[0,\tau]\times[0,1]} \ell(\varphitil_i(s,y)) \one_{[0,\zeta_i(s)-\zeta_{i+1}(s))}(y) \,ds\,dy,
\end{align*}
where $\varphitil_i(s,y)$ is the average of $\varphi_i(s,y)$ over $y \in [0,\zeta_i(s)-\zeta_{i+1}(s))$ for each $i \in \Nmb$ and $s \in [0,T]$, namely
$$\varphitil_i(s,y) = \one_{[0,\zeta_i(s)-\zeta_{i+1}(s))}(y) \frac{\int_0^1\varphi_i(s,z)\one_{[0,\zeta_i(s)-\zeta_{i+1}(s))}(z)\,dz}{\zeta_i(s)-\zeta_{i+1}(s)} + \one_{[\zeta_i(s)-\zeta_{i+1}(s),1]}(y).$$
Also note that $(\varphi_0,\varphitil_1,\varphitil_2,\dotsc) \in \Smc(\zetabd,\psibd)$.
Therefore, without loss of generality, we can assume that $\varphi_i(s,y)$ is constant over $y \in [0,\zeta_i(s)-\zeta_{i+1}(s))$ and $1$ over
$[\zeta_i(s)-\zeta_{i+1}(s), 1]$, for each $i \in \Nmb$ and $s \in [0,T]$.
Let 
$$\theta(s,y) := \sum_{i=1}^\infty \varphi_i(s,y-(1-\zeta_i(s))) \one_{[1-\zeta_i(s),1-\zeta_{i+1}(s))}(y) + \one_{[0,1-\zeta_1(s))}(y)$$
be the ``master'' control of $\varphibd$.
Let $\thetabar(s,y) = \min\{1,\theta(s,y)\}$ and $\varphibar_0(s,y) = \max\{1,\varphi_0(s,y)\}$.
Then $\|\thetabar\|_\infty \le 1$ and
\begin{equation}
    \label{eq:special-bound}
    \int_{\Xt} \ell(\varphibar_0(s,y))\,ds\,dy \le \int_{\Xt} \ell(\varphi_0(s,y))\,ds\,dy < \infty, \quad \forall\,t\in[0,T],
\end{equation}
as $\ell(x)$ in decreasing in $0 \le x \le 1$.
Also note that $\thetabar(s,y)$ is continuous in a.e.\ $y$ for each $s \in [0,T]$.
It then follows from Lemma \ref{lem:special-well-posedness} (with $\alpha$ and $\theta$ there replaced by $\varphibar_0$ and $\thetabar$) that there exists a unique pair $(\bar\zetabd,\bar\psibd) \in \Cmc$ such that
\begin{align*}
	\psibar_1(t) & = x_1 + \int_{\Xt}\varphibar_0(s,y)\,ds\,dy  - \int_{\Xt}\one_{[1-\zetabar_1(s),1-\zetabar_2(s))}(y) \thetabar(s,y) \,ds\,dy, \\
	\psibar_i(t) & = x_i - \int_{\Xt}\one_{[1-\zetabar_i(s),1-\zetabar_{i+1}(s))}(y) \thetabar(s,y) \,ds\,dy, \quad i \ge 2.
\end{align*}		
In particular, $\bar\varphibd \in \Smc(\bar\zetabd,\bar\psibd)$ where
\begin{equation*}
	\varphibar_i(s,y) = \thetabar(s,y+1-\zetabar_i(s)) \one_{[0,\zetabar_i(s)-\zetabar_{i+1}(s))}(y) + \one_{[\zetabar_i(s)-\zetabar_{i+1}(s),1)}(y), \quad i \ge 1.
\end{equation*}
Since $\varphibar_0 \ge 1$ and $\thetabar \le 1$, we see that $\psibar_1$ is non-decreasing and hence $\zetabar_1(t) = 1$ for all $t \in [0,\tau]$.
Also, since $\ell(x)$ is increasing in $x \ge 1$, the construction of $\thetabar$ and $\bar\varphibd$ guarantees
\begin{align*}
    & \sum_{i=1}^\infty \int_{[0,\tau]\times[0,1]} \ell(\varphi_i(s,y))\,ds\,dy 
    = \sum_{i=1}^\infty \int_{[0,\tau]\times[0,1]} \ell(\varphi_i(s,y)) \one_{[0,\zeta_i(s)-\zeta_{i+1}(s))}(y) \,ds\,dy \\
    & = \int_{[0,\tau]\times[0,1]} \ell(\theta(s,y))\,ds\,dy 
    \ge \int_{[0,\tau]\times[0,1]} \ell(\thetabar(s,y))\,ds\,dy \\
    & = \sum_{i=1}^\infty \int_{[0,\tau]\times[0,1]} \ell(\varphibar_i(s,y)) \one_{[0,\zetabar_i(s)-\zetabar_{i+1}(s))}(y) \,ds\,dy 
    = \sum_{i=1}^\infty \int_{[0,\tau]\times[0,1]} \ell(\varphibar_i(s,y))\,ds\,dy. 
\end{align*}
This and \eqref{eq:special-bound} give \eqref{eq:special-cost}.
It now remains to show $\|\bar\zetabd(\tau)\|_1 \ge \|\zetabd(\tau)\|_1$.
Note that
\begin{align*}
	& \|\bar\zetabd(\tau)\|_1 - \|\zetabd(\tau)\|_1 = \sum_{i=1}^\infty \psibar_i(\tau) - \sum_{i=1}^\infty \psi_i(\tau) \\
	& = \int_0^\tau \left(\int_0^1 [\varphibar_0(s,y) - \thetabar(s,y)\one_{[1-\zetabar_1(s),1)}(y) - \varphi_0(s,y) + \theta(s,y)\one_{[1-\zeta_1(s),1)}(y)]\,dy \right)ds \\
	& \ge \int_{s \in (0,\tau) : \zeta_1(s) < 1} \left(\int_0^1 [\varphibar_0(s,y) - \thetabar(s,y) - \varphi_0(s,y) + \theta(s,y)\one_{[1-\zeta_1(s),1)}(y)]\,dy \right)ds,
\end{align*}
where the last line follows on noting that $\varphibar_0 \ge \varphi_0$, $\thetabar \le \theta$, and $\zetabar_1(s) \equiv 1$ and hence the inside integral is non-negative whenever $\zeta_1(s)=1$.
Since $\{s \in (0,\tau) : \zeta_1(s) < 1\}$ is an open set, we can write
$$\{s \in (0,\tau) : \zeta_1(s) < 1\} = \bigcup_{k=1}^\infty E_k$$
for disjoint intervals $E_k=(a_k,b_k)$ with $\zeta_1(a_k)=\zeta_1(b_k)=1$.
It then suffices to show
$$\int_{a_k}^{b_k} \left(\int_0^1 [\varphibar_0(s,y) - \thetabar(s,y) - \varphi_0(s,y) + \theta(s,y)\one_{[1-\zeta_1(s),1)}(y)]\,dy \right)ds \ge 0$$
for each $k$.
Since $\varphibar_0 \ge 1 \ge \thetabar$, it suffices to show
$$\int_{a_k}^{b_k} \left(\int_0^1 [\varphi_0(s,y) - \theta(s,y)\one_{[1-\zeta_1(s),1)}(y)]\,dy \right)ds \le 0.$$
But the left hand side is simply
$$\sum_{i=1}^\infty [\psi_i(b_k)-\psi_i(a_k)] \le \psi_1(b_k)-\psi_1(a_k) = \zeta_1(b_k) - \zeta_1(a_k) = 0,$$
where the first equality follows as $\zeta_1(s)<1$ for $s \in E_k=(a_k,b_k)$ and hence there is no contribution from the reflection terms over this interval.
Therefore we have verified $\|\bar\zetabd(\tau)\|_1 \ge \|\zetabd(\tau)\|_1$ and hence the claim holds. 

Now fix $\sigma \in (0,1)$.
Consider any $(\zetabd,\psibd) \in F_\varepsilon$ and $\varphibd \in \Smc(\zetabd,\psibd)$ with 
$$\sum_{i=0}^\infty \int_{\XT} \ell(\varphi_i(s,y))\,ds\,dy \le \Imc(\zetabd,\psibd) + \sigma < \infty,$$ 
$\|\zetabd(T)\|_1 \ge \|\xbd\|_1 + \varepsilon$ and $\zeta_1(t) = 1$ for all $t \in [0,T]$.
Then
\begin{align*}
	& \sum_{i=0}^\infty \int_{\XT} \ell(\varphi_i(s,y))\,ds\,dy \\
	& \ge \int_{\XT} \left[\ell(\varphi_0(s,y)) + \sum_{i=1}^\infty \ell(\varphi_i(s,y)) \one_{[0,\zeta_i(s)-\zeta_{i+1}(s))}(y) \right]ds\,dy \\
	& \ge T \ell\left(\frac{1}{T} \int_{\XT} \varphi_0(s,y)\,ds\,dy\right) + T \ell\left(\frac{1}{T} \int_{\XT} \sum_{i=1}^\infty \varphi_i(s,y) \one_{[0,\zeta_i(s)-\zeta_{i+1}(s))}(y) \,ds\,dy\right),
\end{align*}
where the third line uses Jensen's inequality and the fact that $\zeta_1(s) \equiv 1$.
This quantity can be further bounded from below by
\begin{equation*}
	T \inf\{\ell(a) + \ell(b) : a,b \ge 0, a-b=c, c \ge \frac{\varepsilon}{T} \},
\end{equation*}
where the constraint $c \ge \varepsilon/T$ follows on observing that
$\varepsilon \le \|\zetabd(T)\|_1-\|\xbd\|_1 = \sum_{i=1}^\infty [\psi_i(T)-\psi_i(0)]$.
Using Lagrange multiplies one finds that given $c \ge \frac{\varepsilon}{T}$, the above infimum is achieved at
$$\ahat=\frac{c+\sqrt{c^2+4}}{2}, \quad \bhat=\frac{1}{\ahat} = \frac{-c+\sqrt{c^2+4}}{2}$$
with value
$f(c) := \ell(\ahat)+\ell(\bhat) = \ell(\ahat)+\ell(1/\ahat).$
Note that
$$\frac{df}{dc} = \log(\ahat) \frac{d\ahat}{dc} + \frac{\log(\ahat)}{\ahat^2} \frac{d\ahat}{dc}.$$
Since $\log(\ahat) \ge 0$ and $\frac{d\ahat}{dc} \ge 0$, we see that the infimum over $c \ge \varepsilon/T$ is finally achieved at $\chat=\varepsilon/T$.
This is exactly the choice in \eqref{eq:optimizer} for the candidate optimizer.
Since $\sigma \in (0,1)$ is arbitrary, we have that
$$\Imc(F_\varepsilon) = \Imc(\zetabd^*,\psibd^*) = \sum_{i=0}^\infty \int_{\XT} \ell(\varphi_i^*(s,y))\,ds\,dy = T\ell(a^*) + T\ell(b^*).$$

Note that from the form of $a^*$ and $b^*$ in \eqref{eq:optimizer}, $\varepsilon \mapsto \Imc(F_\varepsilon)$ is continuous in $(0,1)$.
Next, note that for $\delta \in (0, \varepsilon)$,
$F_{\varepsilon+\delta} \subset G_\varepsilon \subset F_\varepsilon \subset F_{\varepsilon-\delta}$.
Since $\xbd^n \to \xbd$ as $n \to \infty$ we have from the LDP in Theorem \ref{thm:mainResult}, that 
\begin{equation*}
	 - \Imc(F_{\varepsilon+\delta}) \le \liminf_{n\to\infty}\frac{1}{n}\log(\Pmb(G_\varepsilon^n)) \le
	\limsup_{n\to\infty}\frac{1}{n}\log(\Pmb(F_\varepsilon^n)) \le   - \Imc(F_{\varepsilon-\delta}).
\end{equation*}
The first statement in Theorem \ref{thm:large-customers} now follows on sending $\delta \to 0$ in the above display.
The second statement follows from the observations that $\sqrt{4+x^2}=2+o(x)$ and $\ell(1+x)=\frac{x^2}{2} + o(x^2)$ as $x\to0$. \qed

\appendix

\section{Comments on the proof of Lemma \ref{lem:uniqueness}(a)}
\label{sec:appendix}

In this appendix we briefly explain how the product topology in \cite[Lemma 5.1(a)]{BudhirajaFriedlanderWu2021many} can be improved to the $\|\cdot\|_{1,\infty}$ norm in Lemma \ref{lem:uniqueness}(a) of the current work. 
The key observation is that the $\|\cdot\|_\infty$ norm in \cite[Lemma 5.2(iii)]{BudhirajaFriedlanderWu2021many} can be replaced with the $\|\cdot\|_{1,\infty}$ norm which then immediately yields the strengthened form of Lemma 5.1(a). 
Here we note that the statement of \cite[Lemma 5.2]{BudhirajaFriedlanderWu2021many} contains an error which has been corrected in \cite[Lemma 5.2*]{BudhirajaFriedlanderWu2025errata}, however that does not significantly affect the discussion below. 

The overall idea is that the proof of \cite[Lemma 5.2]{BudhirajaFriedlanderWu2021many} proceeds by a series of approximations of a given given $(\zetabd^*,\psibd^*)\in\Cmc$ that only affect finitely many coordinates at each stage and thus controlling the $\|\cdot\|_{1,\infty}$ norm is as easy as controlling the $\|\cdot\|_{\infty}$ norm, in fact many of the estimates used in the proof of \cite[Lemma 5.2]{BudhirajaFriedlanderWu2021many} are obtained by bounding the latter by the former.
Specifically, the changes needed are as follows.

\begin{itemize}
\item 
    In the statement of \cite[Lemma 5.3(b)]{BudhirajaFriedlanderWu2021many}, $\|\cdot\|_{\infty}$ can be replaced by $\|\cdot\|_{1,\infty}$. This is done by observing that the second displayed equation from bottom on page 2404 can be changed as
	$$\|(\zetabd,\psibd) - (\tilde\zetabd,\tilde\psibd)\|_{1,\infty}
    \le \sum_{k=1}^{K-1} \|\zeta_k -\tilde \zeta_k\|_{\infty} +  \sum_{k=1}^{K-1} \|\psi_k -\tilde \psi_k\|_{\infty} \le \frac{\sigma}{4} + \frac{3\sigma}{4} = \sigma.$$
\item 
    \cite[Lemma 5.4]{BudhirajaFriedlanderWu2021many} is a direct consequence of \cite[Lemma 5.3]{BudhirajaFriedlanderWu2021many} and so the $\|\cdot\|_{\infty}$ in the statement of Lemma 5.4 can be replaced by the $\|\cdot\|_{1,\infty}$ norm by using the above strengthened form of Lemma 5.3.
\item 
    Finally, in the statement of \cite[Lemma 5.2]{BudhirajaFriedlanderWu2021many}, $\|\cdot\|_{\infty}$ can be replaced by the $\|\cdot\|_{1,\infty}$ norm as follows.
    \begin{itemize}
    \item 
        In the second line from bottom on page 2405, $\|\cdot\|_{\infty}$ can be replaced by the $\|\cdot\|_{1,\infty}$ norm as this is just re-stating (the above strengthened form of) Lemma 5.4. 
        Using this and (5.30) in \cite{BudhirajaFriedlanderWu2021many}, the first displayed equation on page 2408 can be replaced as
    	\begin{align*}
    		\|(\bar\zetabd^{new},\bar\psibd^{new}) - (\tilde\zetabd,\tilde\psibd)\|_{1,\infty} & \le \|(\bar\zetabd^{new},\bar\psibd^{new}) - (\bar\zetabd,\bar\psibd)\|_{1,\infty} + \|(\bar\zetabd,\bar\psibd) - (\tilde\zetabd,\tilde\psibd)\|_{1,\infty} \\
    		& \le \sum_{i=1}^{\bar N} \frac{\sigma}{8 \bar N} + \frac{\sigma}{16} \le \frac{3\sigma}{16}.
    	\end{align*}
    \item 
        Using the estimate above 
    	(5.37) in \cite{BudhirajaFriedlanderWu2021many}   and observing that $\bar \zetabd$ and $\zetabd$ differ only in the $(K+1)$-th coordinate, the estimate in 
        (5.37) of \cite{BudhirajaFriedlanderWu2021many} can be written as
    	$$\|\zetabd-\bar\zetabd\|_{1,\infty} \le \frac{\sigma}{4\Nbar}.$$
    \item 
    	The first display on page 2411, in fact gives a bound on the $\|\cdot \|_{1, \infty}$ norm and says that (cf. \cite{BudhirajaFriedlanderWu2025errata})
    	\begin{align*}
    		\|\psibd-\bar\psibd\|_{1,\infty} & \le \frac{\sigma}{16}.
        \end{align*}
    \item 
        Combining the last two estimates, the second display on page 2411 can be replaced as
        \begin{align*}
    		\|(\zetabd,\psibd)-(\bar\zetabd,\bar\psibd)\|_{1,\infty} & \le \frac{\sigma}{16} + \frac{\sigma}{4\Nbar} \le \frac{5\sigma}{16}.
    	\end{align*}
    \item 
        The argument below the above estimate in \cite{BudhirajaFriedlanderWu2021many} is not needed any more, as explained in \cite{BudhirajaFriedlanderWu2025errata}, and hence no further changes of norms are needed. 
    \end{itemize}
    These changes complete the proof of \cite[Lemma 5.2]{BudhirajaFriedlanderWu2021many} with $\|\cdot\|_{\infty}$ replaced by the $\|\cdot\|_{1,\infty}$ norm.
\item 
    The strengthened form of Lemma 5.2 immediately yields the strengthened form of Lemma 5.1 as stated in the current work.
\end{itemize}

\subsection*{Acknowledgments}
AB was partially supported by NSF DMS-2152577, NSF  DMS-2134107, NSF DMS-2506010.\\
RW was partially supported by NSF DMS-2308120 and Simons Foundation travel grant MP-TSM-00002346. 


\begin{bibdiv}
\begin{biblist}

\bib{banerjee2018join}{article}{
      author={Banerjee, Sayan},
      author={Mukherjee, Debankur},
       title={{Join-the-shortest queue diffusion limit in Halfin–Whitt
  regime: Tail asymptotics and scaling of extrema}},
        date={2019},
     journal={The Annals of Applied Probability},
      volume={29},
      number={2},
       pages={1262\ndash 1309},
         url={https://doi.org/10.1214/18-AAP1436},
}

\bib{BhamidiBudhirajaDewaskar2022near}{article}{
      author={Bhamidi, Shankar},
      author={Budhiraja, Amarjit},
      author={Dewaskar, Miheer},
       title={Near equilibrium fluctuations for supermarket models with growing
  choices},
        date={2022},
     journal={The Annals of Applied Probability},
      volume={32},
      number={3},
       pages={2083\ndash 2138},
}

\bib{BillingsleyConv}{book}{
      author={Billingsley, Patrick},
       title={Convergence of probability measures},
     edition={Second},
      series={Wiley Series in Probability and Statistics: Probability and
  Statistics},
   publisher={John Wiley \& Sons Inc.},
     address={New York},
        date={1999},
        ISBN={0-471-19745-9},
         url={http://dx.doi.org/10.1002/9780470316962},
        note={A Wiley-Interscience Publication},
      review={\MR{1700749 (2000e:60008)}},
}

\bib{bramson2012asymptotic}{article}{
      author={Bramson, M},
      author={Lu, Y},
      author={Prabhakar, B},
       title={Asymptotic independence of queues under randomized load
  balancing},
        date={2012},
     journal={Queueing Systems},
      volume={71},
      number={3},
       pages={247\ndash 292},
}

\bib{budhiraja2013large}{article}{
      author={Budhiraja, Amarjit},
      author={Chen, Jiang},
      author={Dupuis, Paul},
       title={Large deviations for stochastic partial differential equations
  driven by a {P}oisson random measure},
        date={2013},
     journal={Stochastic Processes and their Applications},
      volume={123},
      number={2},
       pages={523\ndash 560},
}

\bib{BudhirajaDupuis2019analysis}{book}{
      author={Budhiraja, Amarjit},
      author={Dupuis, Paul},
       title={Analysis and approximation of rare events: Representations and
  weak convergence methods},
   publisher={Springer US},
        date={2019},
      volume={94},
}

\bib{budhiraja2016moderate}{article}{
      author={Budhiraja, Amarjit},
      author={Dupuis, Paul},
      author={Ganguly, Arnab},
       title={Moderate deviation principles for stochastic differential
  equations with jumps},
        date={2016},
     journal={The Annals of Probability},
      volume={44},
      number={3},
       pages={1723\ndash 1775},
}

\bib{budhiraja2011variational}{inproceedings}{
      author={Budhiraja, Amarjit},
      author={Dupuis, Paul},
      author={Maroulas, Vasileios},
       title={Variational representations for continuous time processes},
organization={Institut Henri Poincar{\'e}},
        date={2011},
   booktitle={Annales de l'institut henri poincar{\'e}, probabilit{\'e}s et
  statistiques},
      volume={47},
       pages={725\ndash 747},
}

\bib{BudhirajaFriedlanderWu2021many}{article}{
      author={Budhiraja, Amarjit},
      author={Friedlander, Eric},
      author={Wu, Ruoyu},
       title={Many-server asymptotics for join-the-shortest-queue: {L}arge
  deviations and rare events},
        date={2021},
     journal={The Annals of Applied Probability},
      volume={31},
      number={5},
       pages={2376\ndash 2419},
}

\bib{BudhirajaFriedlanderWu2025errata}{article}{
      author={Budhiraja, Amarjit},
      author={Friedlander, Eric},
      author={Wu, Ruoyu},
       title={Errata to ``{M}any-server asymptotics for
  join-the-shortest-queue: {L}arge deviations and rare events''},
        date={2025},
     journal={Submitted to Ann. App. Prob., cf. supplemantary document added at
  the end of arXiv preprint arXiv:1904.04938},
}

\bib{van2018scalable}{article}{
      author={der Boor, Mark~Van},
      author={Borst, Sem~C.},
      author={Van~Leeuwaarden, Johan S.~H.},
      author={Mukherjee, Debankur},
       title={Scalable load balancing in networked systems: A survey of recent
  advances},
        date={2022},
     journal={SIAM Review},
      volume={64},
      number={3},
       pages={554\ndash 622},
}

\bib{dupuis2011weak}{book}{
      author={Dupuis, Paul},
      author={Ellis, Richard~S},
       title={A weak convergence approach to the theory of large deviations},
   publisher={John Wiley \& Sons},
        date={2011},
      volume={902},
}

\bib{DupuisIshii1991lipschitz}{article}{
      author={Dupuis, Paul},
      author={Ishii, Hitoshi},
       title={On {L}ipschitz continuity of the solution mapping to the
  {S}korokhod problem, with applications},
        date={1991},
     journal={Stochastics and Stochastic Reports},
      volume={35},
      number={1},
       pages={31\ndash 62},
}

\bib{eschenfeldt2018join}{article}{
      author={Eschenfeldt, Patrick},
      author={Gamarnik, David},
       title={Join the shortest queue with many servers. {T}he heavy-traffic
  asymptotics},
        date={2018},
     journal={Mathematics of Operations Research},
      volume={43},
      number={3},
       pages={867\ndash 886},
         url={https://doi.org/10.1287/moor.2017.0887},
}

\bib{HarrisonReiman1981reflected}{article}{
      author={Harrison, J~Michael},
      author={Reiman, Martin~I},
       title={Reflected {B}rownian motion on an orthant},
        date={1981},
     journal={The Annals of Probability},
      volume={9},
      number={2},
       pages={302\ndash 308},
}

\bib{IkedaWatanabe}{book}{
      author={Ikeda, Nobuyuki},
      author={Watanabe, Shinzo},
       title={Stochastic {D}ifferential {E}quations and {D}iffusion
  {P}rocesses},
     edition={Second},
      series={North-Holland Mathematical Library},
   publisher={North-Holland Publishing Co.},
     address={Amsterdam},
        date={1989},
      volume={24},
        ISBN={0-444-87378-3},
      review={\MR{1011252 (90m:60069)}},
}

\bib{joffe1986weak}{article}{
      author={Joffe, A},
      author={M{\'e}tivier, M},
       title={Weak convergence of sequences of semimartingales with
  applications to multitype branching processes},
        date={1986},
     journal={Advances in Applied Probability},
       pages={20\ndash 65},
}

\bib{mitzenmacher2001power}{article}{
      author={Mitzenmacher, M},
       title={The power of two choices in randomized load balancing},
        date={2001},
     journal={IEEE Transactions on Parallel and Distributed Systems},
      volume={12},
      number={10},
       pages={1094\ndash 1104},
}

\bib{mukboretal}{article}{
      author={Mukherjee, Debankur},
      author={Borst, Sem~C},
      author={Van~Leeuwaarden, Johan~SH},
      author={Whiting, Philip~A},
       title={Universality of power-of-d load balancing in many-server
  systems},
        date={2018},
     journal={Stochastic Systems},
      volume={8},
      number={4},
       pages={265\ndash 292},
}

\bib{vvedenskaya1996queueing}{article}{
      author={Vvedenskaya, ND},
      author={Dobrushin, RL},
      author={Karpelevich, FI},
       title={Queueing system with selection of the shortest of two queues:
  {A}n asymptotic approach},
        date={1996},
     journal={Problemy Peredachi Informatsii},
      volume={32},
      number={1},
       pages={20\ndash 34},
}

\end{biblist}
\end{bibdiv}

\bigskip
\noindent \scriptsize{\textsc{ 
\begin{minipage}{0.5\linewidth}
A. Budhiraja \newline
Department of Statistics and Operations Research
\newline
University of North Carolina\newline
Chapel Hill, NC 27599, USA\newline
email:  budhiraj@email.unc.edu     
\end{minipage}
\hfill
\begin{minipage}{0.4\linewidth}
R. Wu\newline
Department of Mathematics,\newline
 Iowa State University\newline
 Ames,  IA 50011, USA\newline
email: ruoyu@iastate.edu 
\end{minipage}}
}
\end{document}